\newcommand\sbullet[1][.7]{\mathbin{\ThisStyle{\vcenter{\hbox{%
					\scalebox{#1}{$\SavedStyle\bullet$}}}}}%
}
\definecolor{gold}{rgb}{0.85, 0.65, 0.13}
\global\long\def\red#1{\textcolor{red}{#1}}%
		\ifnum\bookmarkget{level}>1 %
\newtheorem{theorem}{Theorem}
\newtheorem{assumption}{Assumption}
\newtheorem{definition}{Definition}
\crefname{assumption}{Assumption}{Assumptions}
\crefname{definition}{Definition}{Definitions}
\crefname{lemma}{Lemma}{Lemmas}
\crefname{remark}{Remark}{Remarks}
\crefname{theorem}{Theorem}{Theorems}
\crefname{proposition}{Proposition}{Propositions}
\crefname{section}{Section}{Sections}
\crefname{figure}{Fig.}{Figs.}
\crefname{equation}{}{}
\crefname{table}{Table}{Tables}
\crefname{appendix}{}{}
\newcommand{\fnc}[1]{\ensuremath{\mathcal{#1}}}
\newcommand{\vecfnc}[1]{\ensuremath{\boldsymbol{\mathcal{#1}}}} % vector function
\newcommand{\uk}[0]{\ensuremath{\bm{{u}}_{k}}}
\renewcommand{\H}[0]{\mathsf{H}}
\newcommand{\D}[0]{\mathsf{D}}
\newcommand{\Q}[0]{\mathsf{Q}}
\newcommand{\E}[0]{\mathsf{E}}
\newcommand{\I}[0]{\mathsf{I}}
\renewcommand{\S}[0]{\mathsf{S}}
\newcommand{\J}[0]{\mathsf{J}}
\newcommand{\Z}[0]{\mathsf{Z}}
\newcommand{\B}[0]{\mathsf{B}}
\newcommand{\R}[0]{\mathsf{R}}
\newcommand{\N}[0]{\mathsf{N}}
\newcommand{\V}[0]{\mathsf{V}}
\newcommand{\Dxi}[0]{{\mathsf{D}}_{{x}_{i}}}
\newcommand{\Dxik}[0]{{\mathsf{D}}_{{x}_{i} k}}
\newcommand{\Sxi}[0]{{\mathsf{S}}_{{x}_{i}}}
\newcommand{\Sxik}[0]{{\mathsf{S}}_{{x}_{i}k}}
\newcommand{\Qxi}[0]{{\mathsf{Q}}_{{x}_{i}}}
\newcommand{\Qxik}[0]{{\mathsf{Q}}_{{x}_{i}k}}
\newcommand{\Exi}[0]{{\mathsf{E}}_{{x}_{i}}}
\newcommand{\Exik}[0]{{\mathsf{E}}_{{x}_{i}k}}
\newcommand{\Nxig}[0]{{\mathsf{N}}_{{x}_{i}\gamma}}
\newcommand{\Bg}[0]{\mathsf{B}_{\gamma}}
\newcommand{\Rgk}[0]{\mathsf{R}_{\gamma k}}
\newcommand{\etal}[0]{{et~al.\@}\xspace}
\newcommand{\eg}[0]{{e.g.\@}\xspace}
\newcommand{\ie}[0]{{i.e.\@}\xspace}
\newcommand{\ignore}[1]{} % comment out large sections of code
\newcommand{\polyref}[1]{\ensuremath{\mathbb{P}^{#1}}(\hat{\Omega})}
\newcommand{\contref}[1]{\ensuremath{\mathcal{C}^{#1}}(\hat{\Omega})}
\newcommand{\poly}[1]{\ensuremath{\mathbb{P}^{#1}}({\Omega}_k)}
\newcommand{\cont}[1]{\ensuremath{\mathcal{C}^{#1}}({\Omega}_k)}
\newcommand{\vecLtwo}[1]{[\ensuremath{L^2}({\Omega}_k)]^{#1}}
\newcommand{\IR}[1]{\mathbb{R}^{#1}}%
\newcommand{\IRtwo}[2]{\mathbb{R}^{{#1}\times{#2}}}%
\global\long\def\norm#1{\left\vert \left\vert #1\right\vert \right\vert }%
\global\long\def\fn#1{\mathcal{#1}}%
\global\long\def\mathds#1{\mathds{#1}}%
\let\overlinewithoriginalheight\overline
\newcommand*\overlinewithlessheight[1]{{\mathpalette\overline@aux{#1}}}
\newcommand*\overline@aux[2]{
	\begingroup
	\count0=\fam 
	\setbox0=\hbox{$\m@th #1\fam=\count0 #2$}
	\@tempdima=.4\ht0
	\setbox0=\hbox{$\m@th #1\fam=\count0\overlinewithoriginalheight{#2}$}%
	\advance\@tempdima by .6\ht0
	\ht0=\@tempdima 
	\usebox0
	\endgroup
}
\let\overline\overlinewithlessheight
\begin{document}
	\begin{frontmatter}
		
	\title{Tensor-Product Split-Simplex Summation-By-Parts Operators} 
	
	\author[1]{Zelalem Arega {Worku}\corref{cor1}}
	\cortext[cor1]{Corresponding author: } 
	\ead{zelalem.worku@mail.utoronto.ca}

	\author[2]{Jason E. {Hicken}}
	\ead{dwz@oddjob.utias.utoronto.ca}

	\author[1]{David W. {Zingg}}
	\ead{dwz@oddjob.utias.utoronto.ca}

	\address[1]{Institute for Aerospace Studies, University of Toronto, 
					4925 Dufferin St, Toronto, ON, 
					M3H 5T6,
					Canada}

	\address[2]{Department of Mechanical, Aerospace, and Nuclear Engineering, Rensselaer Polytechnic Institute,
					110 8th St, Troy, NY,
					2180-3590, 
					USA}
	
	\addcontentsline{toc}{section}{Abstract}
	\begin{abstract}
		%%%
		We present an approach to construct efficient sparse summation-by-parts (SBP) operators on triangles and tetrahedra with a tensor-product structure. The operators are constructed by splitting the simplices into quadrilateral or hexahedral subdomains, mapping tensor-product SBP operators onto the subdomains, and assembling back using a continuous-Galerkin-type procedure. These tensor-product split-simplex operators do not have repeated degrees of freedom at the interior interfaces between the split subdomains. Furthermore, they satisfy the SBP property by construction, leading to stable discretizations. The accuracy and sparsity of the operators substantially enhance the efficiency of SBP discretizations on simplicial meshes. The sparsity is particularly important for entropy-stable discretizations based on two-point flux functions, as it reduces the number of two-point flux computations. We demonstrate through numerical experiments that the operators exhibit efficiency surpassing that of the existing dense multidimensional SBP operators by more than an order of magnitude in many cases. This superiority is evident in both accuracy per degree of freedom and computational time required to achieve a specified error threshold.
		%%%%
	\end{abstract}
	
	\begin{keyword}
		Summation-by-parts, Simplex, Unstructured mesh, High-order method, Tensor-product operator
	\end{keyword}
	
\end{frontmatter}

\section{Introduction}
High-order methods can attain comparable accuracy as low-order methods on substantially coarser grids \cite{wang2013high}, and they are well-suited for modern computer architectures due to their high arithmetic intensity and data locality. However, despite their promising efficiency, high-order methods often suffer from robustness issues. A successful approach to mitigate this issue is to construct high-order methods such that they satisfy a property known as summation-by-parts (SBP). The SBP property is a discrete analogue of integration by parts and plays a crucial role in the development of provably stable high-order discretizations of partial differential equations (PDEs). The development of SBP operators and simultaneous approximation terms (SATs), which are weak interface coupling and boundary condition enforcement terms, is described in the review papers \cite{fernandez2014review,svard2014review}.  Furthermore, the development of various types of SBP operator on simplicial meshes can be found in \cite{hicken2016multidimensional,fernandez2018simultaneous,chen2017entropy,crean2018entropy}. 

While several factors affect the efficiency of a high-order method of a given order, in general, accuracy and sparsity are among the most important. Two methods of the same order can have orders of magnitude differences in the actual solution error values they produce. Furthermore, a sparse operator with a large number of degrees of freedom may require substantially fewer floating-point operations than dense operators with fewer degrees of freedom. Tensor-product SBP operators offer higher sparsity and accuracy compared to multidimensional SBP operators. Consequently, they are the predominant choice for implementing high-order methods, particularly for entropy-stable schemes, which are computationally demanding. However, simplicial meshes are favored when geometric flexibility is important. As a result, there has been some effort to develop efficient high-order SBP operators on simplicial elements. One approach is to derive quadrature rules with fewer nodes, \eg, \cite{worku2023quadrature}. An alternative is to develop SBP operators with tensor-product structure by applying a collapsed coordinate transformation \cite{sherwin1995triangular,sherwin1996tetrahedralhpfinite}, an approach that has recently been extended to incorporate the SBP property \cite{montoya2023efficient} and provable entropy stability \cite{montoya2024efficient}. While this is a promising approach toward improving the efficiency of entropy-stable schemes on simplices, we explore here an alternative approach toward the same goal. Another method to applying tensor-product operators on a given simplicial mesh involves splitting the simplicial mesh into quadrilateral and hexahedral subdomains and applying tensor-product operators on these split subdomains, \eg, \cite{durufle2009influence}. This approach is generally avoided for finite-element methods, as the resulting mesh can be of poor quality \cite{owen1998survey}. Additionally, discretizations on the split subdomains yield a reduced order of accuracy compared to those on affinely mapped quadrilateral or hexahedral meshes \cite{durufle2009influence,castel2009application}. Despite these drawbacks, the approach remains in use as it provides a relatively simple means of obtaining the benefits of tensor-product operators on simplicial meshes \cite{dalcin2019conservative,al2023evaluation}.

In this article, we present a novel approach to construct sparse SBP operators on simplicial elements by combining the idea of splitting simplicial meshes into quadrilateral or hexahedral subdomains with the idea of assembling SBP operators in a continuous-Galerkin-type approach \cite{hicken2016multidimensional}. The splitting is used only for the construction of the reference SBP operators on the simplex, which are referred to as tensor-product split-simplex (TPSS) operators; thus the approach does not require actual splitting of the simplicial mesh generated for a given computational domain. Unlike the method of splitting the physical simplex elements, there are no repeated degrees of freedom inside the reference TPSS element; consequently, there is no need to use numerical fluxes or SATs at the shared interfaces of the split subdomains. Hence, once constructed, TPSS operators are used in the same manner as any other multidimensional SBP operator. Finally, as a consequence of careful application of the mapping metric terms and assembly procedure, the TPSS operators satisfy the SBP property and thus lead to discretely stable discretizations, a property that is not shared by all methods applying the simplex splitting approach. 

The rest of the paper is organized as follows. \cref{sec:preliminaries} presents some notation and definitions. \cref{sec:TPSS construction} describes the construction procedures of the TPSS operators. The accuracy and sparsity analysis as well as the formal definition of the TPSS operators are presented in \cref{sec:acc and sparsity of TPSS}. Finally, the efficiency and stability of the TPSS operators are verified via numerical results in \cref{sec:numerical results}, and conclusions are presented in \cref{sec:conclusions}.

\section{Preliminaries}\label{sec:preliminaries}
\subsection{Notation and definitions}\label{subsec:notation}
In this paper, spatial discretizations are handled using element-type SBP operators. The spatial domain, $ \Omega $, is assumed to be compact and connected, and it is tessellated into $ n_e $ elements, $ {\mathcal T}_h \coloneqq \{\{ \Omega_k\}_{k=1}^{n_e}: \Omega=\cup_{k=1}^{n_e} {\Omega}_k\}$. The boundaries of each element are assumed to be piecewise smooth and will be referred to as facets or interfaces, and the union of the facets of element $ \Omega_{k} $ is denoted by $ \Gamma_k \coloneqq \partial\Omega_{k} $. The set of $ n_p $ volume nodes in element $ \Omega_k $ is represented by $ S_{\Omega_{k}}=\{\bm{x}^{(j)}\}_{j=1}^{n_p} $, where $\bm{x}=[x_{1},\dots,x_{d}]^T$ denotes the Cartesian coordinates on the physical domain, $\bm{x}^{(j)}$ is the tuple of the coordinates of the $j$-th node, and $d$ is the spatial dimension. The number of nodes on facet $ \gamma \in \Gamma_{k}$ is denoted by $ n_f $, and the set of nodes on $\gamma$ are represented by $S_{\gamma}$. Scalar functions over element $\Omega_k$ are written in uppercase script type, \eg, $\fnc{U}_k \in \cont{\infty}$, and vector-valued functions of dimension $ n $ are represented by boldface uppercase script letters, \eg, $\vecfnc{W}_k \in \vecLtwo{n}$. The space of polynomials of total degree $ p $ is denoted by $\poly{p} $. Vectors containing grid function values are denoted by bold letters, \eg, $ \uk \in \IR{n_p}$. We define $h \coloneqq \max_{a, b \in S_{\Omega_{k}}} \norm{a - b}_2$ as the nominal element size. Matrices are denoted by sans-serif uppercase letters, \eg, $\V \in \IRtwo{n_p}{n_p}$; $ \bm{1} $ denotes a vector consisting of all ones, $ \bm {0} $ denotes a vector or matrix consisting of all zeros. The sizes of  $ \bm{1} $ and $ \bm {0} $ should be clear from context. The identity matrix of size $ n \times n $ is denoted by $ \I_{n} $. 

The line, triangle, and tetrahedron reference elements are defined, respectively, as 
\begin{align} \label{eq:ref elems}
	&\widehat{\Omega}_{\text{line}}=\{\xi_{1} \mid -1\le \xi_{1}\le 1\},\\
	&\widehat{\Omega}_{\text{tri}}=\{\left(\xi_{1},\xi_{2}\right)\mid \xi_{1},\xi_{2}\ge-1; \; \xi_{1}+\xi_{2}\le0\},\\
	&\widehat{\Omega}_{\text{tet}}=\{\left(\xi_{1},\xi_{2},\xi_{3}\right)\mid \xi_{1},\xi_{2},\xi_{3}\ge-1; \; \xi_{1}+\xi_{2}+\xi_{3}\le-1\}, 
\end{align}
where $\bm{\xi}=[\xi_{1},\dots,\xi_{d}]^T$ denotes the coordinate system on the reference element. Equilateral reference triangle and tetrahedron are used for illustrations; there is a bijective mapping between the standard and equilateral reference elements. The boundary of the reference element is denoted by $\widehat{\Gamma}$. 

An SBP operator on a reference element, $\widehat{\Omega}$, is defined as follows \cite{hicken2016multidimensional}.
\begin{definition} \label{def:sbp}
	The matrix $\Dxi\in \IRtwo{n_p}{n_p}$ is a degree $ p $ SBP operator approximating the first derivative $ \pdv{{\xi}_i} $ on the set of nodes $ {S}_{\widehat{\Omega}}=\{\bm{\xi}^{(j)}\}_{j=1}^{n_p} $ if
	\begin{enumerate}[label={\arabic*.}]
		\setlength\itemsep{0.05em}
		\item $[ \Dxi \bm{p}]_j = {\pdv{\fnc{P}}{\xi_{i}}}({\bm{\xi}^{(j)}}) $  for all $\fnc{P} \in \polyref{p} $
		\item $ \Dxi={\H}^{-1} \Qxi $ where $ {\H}$ is a symmetric positive definite (SPD) matrix, and 
		\item $ \Qxi = \Sxi + \frac{1}{2} \Exi$ where $ \Sxi= - \Sxi^T $, $\bm{p}^T \Exi  \bm {q} = \sum_{\widehat{\gamma}\in\widehat{\Gamma}}\int_{\widehat{\gamma}} \fnc{P}\fnc{Q} \;{n}_{\xi_i} \dd{{\Gamma}}$
		$\;\forall \fnc{P},\fnc{Q} \in \polyref{r\ge  p} $, and $ {n}_{\xi_{i}} $ is the $ \xi_{i}$-component of the outward pointing unit normal vector on facet $\widehat{\gamma}$.
	\end{enumerate}  
\end{definition} 

The third property in \cref{def:sbp} implies that 
\begin{equation}\label{eq:sbp property}
	\Qxi + \Qxi^T = \Exi,
\end{equation}
which is referred to as the SBP property --- the most fundamental property of SBP operators that extends integration by parts to a discrete setting.

We exclusively consider SBP operators with diagonal norm matrices with extension to dense-norm operators deferred to future work. A diagonal norm matrix contains positive weights of a quadrature rule of degree at least $ 2p-1 $ on its diagonal. Hence, it approximate an inner product,
\begin{equation}
	\bm{p}^T \H \bm{q} = \int_{\widehat{\Omega}} \fn{P}\fn{Q} \dd{\Omega}, \quad \forall \;\fn{P}\fn{Q} \in \polyref{r}, \quad r\le 2p-1,
\end{equation}
and is used to define the $ \H $-norm, $ \norm{\bm{u}}_{\H}^2 = \bm{u}^T \H \bm{u}$. 

\subsection{SBP operators on curved elements}
Assume that we have SBP operators on a certain reference element; for example, tensor-product SBP operators. The construction of element-type SBP operators can be found in the literature, \eg, \cite{fernandez2014generalized,gassner2013skew,hicken2016multidimensional,fernandez2018simultaneous,crean2018entropy,chen2017entropy,glaubitz2023multi}. Operators are usually constructed on a reference element and are mapped to physical elements in the computational domain, which may be curved. We make the following assumption regarding the geometric mapping from the reference to the physical elements. 
\begin{assumption}\label{ass:mapping}
	We assume that there is a bijective polynomial mapping of degree $ p_{\text{geom}}$ from the reference to the physical elements. Furthermore, we assume that $ p_{\text{geom}} \le p+1 $ and $ p_{\text{geom}}\le \lfloor p/2 \rfloor +1 $ in two and three dimensions, respectively, where $ p $ is the degree of the SBP operator and $\lfloor \cdot \rfloor$ is the floor operator.
\end{assumption}

The limits on the polynomial degree of the mapping function are imposed to simplify our discussion for a reason clarified below; in general, SBP operators can be implemented with higher-order geometric mappings. Since the geometric mapping is polynomial by \cref{ass:mapping}, we can compute the exact mapping Jacobian matrix at point $\bm{\xi}^{(m)}$ as
\begin{equation}
	\begin{aligned}
		[\fn{J}^{(m)}_{k}]_{ij} &= \pder[x_{i}]{\xi_{j}}(\bm{\xi}^{(m)}), \quad \forall \bm{\xi}^{(m)}\in S_{\widehat{\Omega}},  \;\; i \in \{1,\dots,d\}, \; \; j\in\{1,\dots,d\},			
	\end{aligned}
\end{equation}
and its determinant is denoted by $|\fn{J}^{(m)}_{k}|$. The determinant of the Jacobian is assumed to be positive at each node. SBP operators on the physical element, $\Omega_{k}$, can now be constructed as \cite{crean2018entropy}
\begin{align}
		[\H_{k}]_{mm} &= [\H]_{mm}|\fn{J}^{(m)}_{k}|, \label{eq:norm matrix}
		\\
		[\Nxig]_{mm} &= \sum_{j=1}^{d}n_{\xi_{j}}|\fn{J}^{(m)}_{k}|\pder[\xi_{j}]{x_i}(\bm{\xi}^{(m)}),\label{eq:normal}
		\\
		\Exik &= \sum_{\gamma\in\Gamma_{k}} \E_{x_{i}}^{kk}, \quad \E_{x_{i}}^{k k}=\Rgk^T \Bg \Nxig \Rgk, \label{eq: Exik} 
		\\
		[\Sxik]_{mn} & = \frac{1}{2}\sum_{j=1}^{d}\left(|\fn{J}^{(m)}_{k}|\pder[\xi_{j}]{x_{i}}(\bm{\xi}^{(m)})[\Q_{\xi_j}]_{mn}
		- |\fn{J}^{(n)}_{k}|\pder[\xi_{j}]{x_{i}}(\bm{\xi}^{(n)})[\Q_{\xi_j}]_{nm} \right),\label{eq:Sxik}
		\\
		\Qxik &= \Sxik + \frac{1}{2}\Exik,\label{eq:Qxik}
		\\
		\Dxik & = \H_{k}^{-1}\Qxik,\label{eq:Dxik}
\end{align}
for all $\bm{\xi}^{(m)}, \bm{\xi}^{(n)} \in S_{\widehat{\Omega}} $ and $i=\{1,\dots,d\}$.
The extrapolation matrix, $\R_{\gamma k} \in \IR{n_f\times n_p}$, is exact for constant functions in the physical element, $ \Omega_k $, particularly $ \Rgk \bm{1} = \bm{1} $. For operators with collocated volume and facet nodes, referred to as SBP diagonal-$\E$ operators \cite{chen2017entropy}, $\R_{\gamma k}$ contains unity at row (facet) and column (volume) indices corresponding to each collocated node, and all other entries are zero. Polynomials in $ \Omega_k $ are not necessarily polynomials in the reference element, $ \widehat{\Omega} $; thus, SBP operators in the physical domain are not exact for polynomials in $ \Omega_k $. However, under \cref{ass:mapping} (or a careful application of high-order geometric mappings \cite{crean2018entropy}) the accuracy of the derivative operators on the physical elements is not compromised \cite{crean2018entropy}. We state, without proof, Theorem 9 in \cite{crean2018entropy} that establishes the accuracy of the SBP derivative operator on a physical element. 
\begin{theorem}\label{thm:Accuracy of Dx}
	Let \cref{ass:mapping} hold and the metric terms be computed exactly, then for $ \bm{u}_k\in\IR{n_p} $ holding the values of $ \fnc{U}\in\contref{p+1} $ at the nodes $ S_{\Omega_{k}} $, the derivative operator given by \cref{eq:Dxik} is order $ p $ accurate, \ie,
	\begin{equation*}
		[\Dxik\bm{u}_k]_{m} = {\pdv{\fnc{U}}{x_{i}}} (\bm{\xi}^{(m)}) +\order{h^{p}}, \quad i\in {1,\dots,d}.
	\end{equation*}
\end{theorem}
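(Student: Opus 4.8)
The plan is to reduce the accuracy of the curved-element operator in \cref{eq:Dxik} to the order-$p$ accuracy of the reference operators $\D_{\xi_j}=\H^{-1}\Q_{\xi_j}$. The latter follows from Property~1 of \cref{def:sbp} together with Taylor's theorem: a degree-$p$ reference difference operator reproduces $\partial_{\xi_j}$ exactly on $\polyref{p}$, hence it is $\order{h^p}$ accurate on any $\fnc{V}\in\contref{p+1}$, since subtracting the degree-$p$ Taylor polynomial of $\fnc{V}$ about $\bm{\xi}^{(m)}$ leaves a residual whose nodal values are $\order{h^{p+1}}$ and whose image under the $\order{h^{-1}}$ operator is $\order{h^p}$. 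The starting point is the chain rule, which at each node gives the exact identity $\frac{\partial\fnc{U}}{\partial x_i}(\bm{\xi}^{(m)}) = |\fn{J}^{(m)}_k|^{-1}\sum_{j=1}^d a_{ij}^{(m)}\,\frac{\partial\fnc{U}}{\partial\xi_j}(\bm{\xi}^{(m)})$, where I abbreviate the scaled metric terms appearing in \cref{eq:normal,eq:Sxik} by $a_{ij}^{(m)}\coloneqq|\fn{J}^{(m)}_k|\,\frac{\partial\xi_j}{\partial x_i}(\bm{\xi}^{(m)})$. Because the geometric mapping is polynomial by \cref{ass:mapping}, each $a_{ij}$ is a polynomial in $\bm{\xi}$, so the products $a_{ij}\fnc{U}$ again lie in $\contref{p+1}$ and are differentiated to order $p$ by the reference operators.

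Next I would expand $[\Dxik\bm{u}_k]_m = [\H_k]_{mm}^{-1}[\Qxik\bm{u}_k]_m$ using $\Qxik=\Sxik+\tfrac12\Exik$ and the split form \cref{eq:Sxik}. Writing $\bm{a}_{ij}$ for the vector of scaled metrics, $\odot$ for the entrywise product (so that $\bm{a}_{ij}\odot\bm{u}_k$ holds the nodal values of $(|\fn{J}|\frac{\partial\xi_j}{\partial x_i})\fnc{U}$), and using $\Q_{\xi_j}=\H\D_{\xi_j}$, the first half of $\Sxik$ contributes $\tfrac12[\H]_{mm}\sum_j a_{ij}^{(m)}[\D_{\xi_j}\bm{u}_k]_m = \tfrac12[\H]_{mm}\sum_j a_{ij}^{(m)}\frac{\partial\fnc{U}}{\partial\xi_j}(\bm{\xi}^{(m)}) + [\H]_{mm}\order{h^p}$, i.e.\ already half the target derivative. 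For the second (transpose) half I would substitute $\Q_{\xi_j}^T=\E_{\xi_j}-\Q_{\xi_j}$, which is the reference analogue of \cref{eq:sbp property}. The $-\Q_{\xi_j}$ piece yields $\tfrac12[\H]_{mm}\sum_j[\D_{\xi_j}(\bm{a}_{ij}\odot\bm{u}_k)]_m$, and by the product rule this equals $\tfrac12[\H]_{mm}\big(\sum_j a_{ij}^{(m)}\frac{\partial\fnc{U}}{\partial\xi_j} + \fnc{U}\sum_j\frac{\partial}{\partial\xi_j}(|\fn{J}|\tfrac{\partial\xi_j}{\partial x_i})\big)(\bm{\xi}^{(m)}) + [\H]_{mm}\order{h^p}$.

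The crux is then the metric (geometric-conservation-law) identity $\sum_{j=1}^d \frac{\partial}{\partial\xi_j}\!\big(|\fn{J}|\,\frac{\partial\xi_j}{\partial x_i}\big)\equiv 0$, which holds identically for the exact polynomial metrics by the Piola identity; this annihilates the spurious $\fnc{U}$-term, so the first half of $\Sxik$ and the $-\Q_{\xi_j}$ part of its second half combine to the full $[\H]_{mm}\sum_j a_{ij}^{(m)}\frac{\partial\fnc{U}}{\partial\xi_j}(\bm{\xi}^{(m)}) + [\H]_{mm}\order{h^p}$. It remains to dispose of the surface contributions: the leftover $-\tfrac12\sum_j[\E_{\xi_j}(\bm{a}_{ij}\odot\bm{u}_k)]_m$ from the transpose half is a boundary term carrying $\sum_j n_{\xi_j}a_{ij}\fnc{U}$ at the facet nodes, which is exactly the quantity supplied by $\tfrac12[\Exik\bm{u}_k]_m$ through $\Nxig$ in \cref{eq:normal,eq: Exik}. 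I would show these cancel identically for diagonal-$\E$/collocated operators (where $\Rgk$ restricts to the facet nodes and the two expressions coincide termwise) and to $\order{h^p}$ for general operators via the accuracy of the extrapolation $\Rgk$. Dividing the surviving $[\H]_{mm}\sum_j a_{ij}^{(m)}\frac{\partial\fnc{U}}{\partial\xi_j}(\bm{\xi}^{(m)}) + [\H]_{mm}\order{h^p}$ by $[\H_k]_{mm}=[\H]_{mm}|\fn{J}^{(m)}_k|$ from \cref{eq:norm matrix}, and invoking the chain-rule identity, recovers $\frac{\partial\fnc{U}}{\partial x_i}(\bm{\xi}^{(m)})+\order{h^p}$, as claimed (the factor $|\fn{J}^{(m)}_k|^{-1}$ being bounded since the Jacobian determinant is positive).

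I expect the main obstacle to be the metric-identity step together with the attendant degree bookkeeping. One must verify that the polynomial metric terms $a_{ij}$ have low enough degree that the reference operators differentiate the products $a_{ij}\fnc{U}$ while retaining the full order $p$, and that the continuous identity $\sum_j\frac{\partial}{\partial\xi_j}(|\fn{J}|\frac{\partial\xi_j}{\partial x_i})=0$ is respected by the split construction. In two dimensions the metrics are cofactors of degree $p_{\text{geom}}-1$, whereas in three dimensions they are quadratic in the first derivatives of the map and so of degree $2(p_{\text{geom}}-1)$; it is precisely to keep these products within the clean accuracy regime of the operator that \cref{ass:mapping} imposes $p_{\text{geom}}\le p+1$ in 2D and $p_{\text{geom}}\le\lfloor p/2\rfloor+1$ in 3D, and that the exact three-dimensional metrics be taken in the conservative (curl) form for which the identity holds.
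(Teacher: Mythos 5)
Your proposal is correct and is essentially the paper's own argument: the paper states this result without proof, citing Theorem 9 of \cite{crean2018entropy}, and that proof (mirrored in the paper's own proof of \cref{thm:acc TPSS}) proceeds exactly as you do --- split $\Qxik$ via \cref{eq:Sxik,eq:Qxik}, use the reference SBP property $\Q_{\xi_j}^T=\E_{\xi_j}-\Q_{\xi_j}$, invoke the product rule together with the exact (Piola) metric identity to recover the second half of the derivative, and cancel the leftover facet terms against $\tfrac{1}{2}\Exik$ using the exactness of $\Rgk$ and the diagonal-$\E$ structure, with \cref{ass:mapping} guaranteeing the metric terms are polynomials of low enough degree. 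One bookkeeping caveat: your closing claim that $|\fn{J}^{(m)}_{k}|^{-1}$ is bounded holds only under a scaling/shape-regularity convention, since with the paper's fixed reference element $|\fn{J}|\sim h^{d}$ as $h\to 0$; the clean way to close the estimate is to track the $h$-powers through the metric factors $|\fn{J}|\,\partial\xi_j/\partial x_i \sim h^{d-1}$ and the $\order{h^{p+1}}$ reference-coordinate derivative errors, which then divide by $[\H_k]_{mm}=[\H]_{mm}|\fn{J}^{(m)}_{k}|\sim h^{d}$ to give the stated $\order{h^{p}}$.
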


Freestream preservation and entropy conservation require that $\Dxik \bm{1} = \bm{0}$, which is equivalent to satisfying the discrete form of the metric invariants \cite{vinokur1974conservation,thomas1979geometric,kopriva2006metric,crean2018entropy,chan2019discretely,shadpey2020entropy}, 
\begin{equation}
	\sum_{i=1}^{d}\pder[]{\xi_{i}}\left(|\fn{J}| \pder[\xi_{i}]{x_{j}}\right) = 0. 
\end{equation}
By limiting the degree of the mapping, \cref{ass:mapping} ensures that $|\fn{J}| \pder[\xi_{i}]{x_{j}}$ is a degree $p$ polynomial in the reference space allowing the discrete metric invariants to be satisfied on curved elements \cite{crean2018entropy,shadpey2020entropy}. If a higher-degree polynomial mapping is needed to represent a three-dimensional geometry, then one may employ the strategy outlined in \cite{crean2018entropy}, which involves solving elementwise quadratic optimization problems. 

\section{Construction of TPSS operators}\label{sec:TPSS construction}
TPSS operators are constructed by mapping tensor-product operators onto quadrilateral or hexahedral subdomains of the reference triangle or tetrahedron, respectively, and patching together the tensor-product SBP operators on each subdomain using a continuous-Galerkin-type assembly. The splitting of the triangle into three quadrilaterals is achieved by connecting the centroid to the mid-points of each edge, while for the tetrahedron, lines connecting the mid-points of each edge to the centroids of the facets sharing the edge and the lines connecting the facet centroids to the centroid of the tetrahedron create planes partitioning the tetrahedron into four hexahedra. These splittings are illustrated in \cref{fig:simplex splitting} and each subdomain will be denoted by $\widehat{\Omega}^{(\ell)}$, where $\ell\in\{1,\dots,L\}$ and $L=d+1$ denotes the number of split subdomains. 
\begin{figure}[t]
	\centering
	\begin{subfigure}{0.5\textwidth}
		\centering
		\includegraphics[scale=0.28]{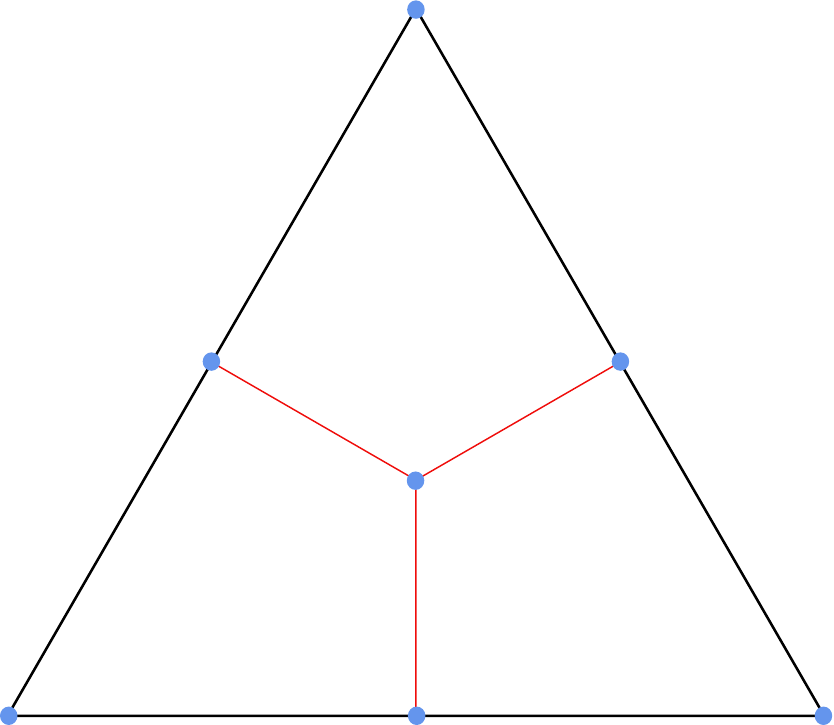}
	\end{subfigure}\hfill
	\begin{subfigure}{0.5\textwidth}
		\centering
		\includegraphics[scale=0.018]{tet_split.pdf}
	\end{subfigure} 
	\caption{\label{fig:simplex splitting} Splitting the triangle and tetrahedron into three quadrilaterals and four hexahedra, respectively.}
\end{figure}

The mapping of any point in the reference quadrilateral or hexahedral element, shown in \cref{fig:quad hex nod numbering}, to the quadrilateral or hexahedral elements in the split simplex elements is bilinear or trilinear, respectively. 
\begin{figure}[t]
	\centering
	\begin{subfigure}{0.5\textwidth}
		\centering
		\includegraphics[scale=0.28]{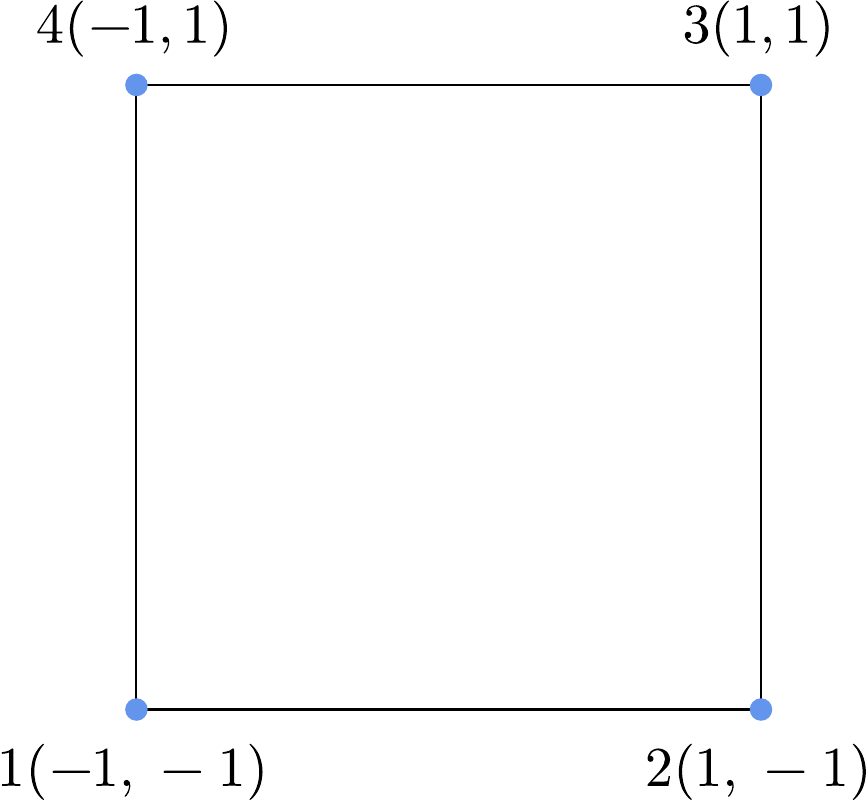}
	\end{subfigure}\hfill
	\begin{subfigure}{0.5\textwidth}
		\centering
		\includegraphics[scale=0.38]{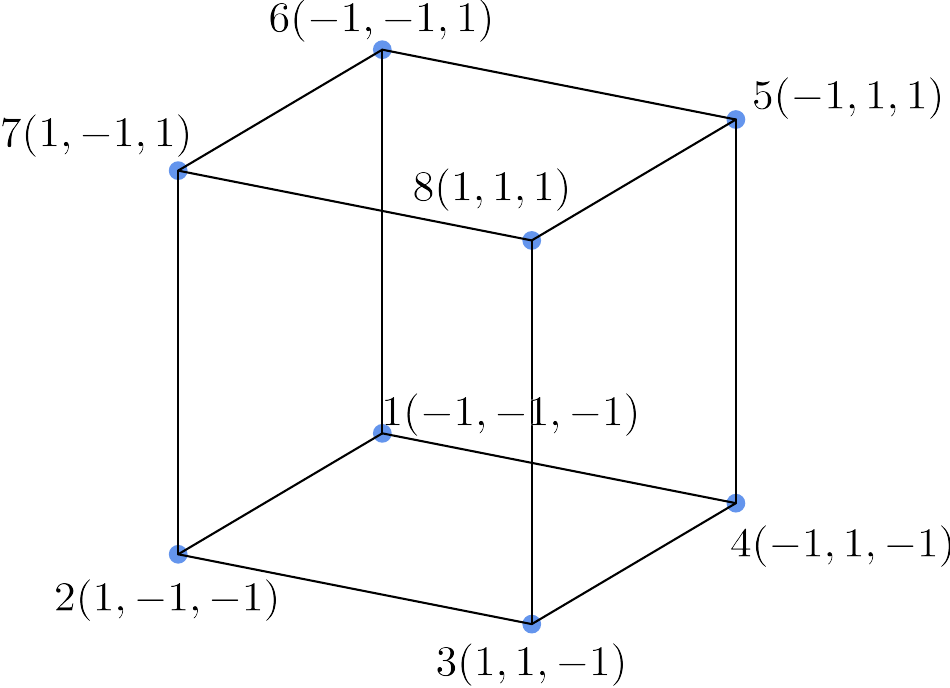}
	\end{subfigure} 
	\caption{\label{fig:quad hex nod numbering} Quadrilateral and hexahedral reference elements and their node numbering.}
\end{figure}
We denote the coordinates on the reference quadrilateral and hexahedral elements by $\bm{\eta}=\{\eta_1,\dots,\eta_{d}\}$, while keeping $\bm{\xi}$ and $\bm{x}$ for the coordinates on the reference simplices and the physical elements, respectively. The bilinear and trilinear mappings of any point in the reference quadrilateral or hexahedron to the quadrilaterals or hexahedra in the split simplex can be written as 
\begin{equation}\label{eq:bilinear trilinear mapping}
	\bm{\xi}^{(\ell)}=(\xi_1^{(\ell)},\dots,\xi_d^{(\ell)}) = \sum_{\alpha=1}^{2^d}P^{(\ell)}_{\alpha}\Psi_{\alpha}\left(\eta_{1},\dots,\eta_{d}\right), \qquad \ell \in\{1,\dots,L\},
\end{equation}
where $P^{(\ell)}_{\alpha}$ is the $\alpha$-th vertex coordinate tuple of the $\widehat{\Omega}^{\ell}$ subdomain in the split simplex, and $\Psi_{\alpha}$ is the $\alpha$-th vertex bilinear or trilinear shape function. For the node numbering shown in \cref{fig:quad hex nod numbering}, the bilinear shape functions on the reference quadrilateral are given by
\begin{equation}\label{eq:bilinear Psi}
	\medmuskip=-0.5mu
	\Psi_{1}=\frac{1}{4}\left(1-\eta_{1}\right)\left(1-\eta_{2}\right),\quad\Psi_{2}=\frac{1}{4}\left(1+\eta_{1}\right)\left(1-\eta_{2}\right),\quad\Psi_{3}=\frac{1}{4}\left(1+\eta_{1}\right)\left(1+\eta_{2}\right),\quad\Psi_{4}=\frac{1}{4}\left(1-\eta_{1}\right)\left(1+\eta_{2}\right).
\end{equation}  
Similarly, the trilinear shape functions on the reference hexahedron are 
\begin{equation}\label{eq:trilinear Psi}
	\medmuskip=-0mu
	\begin{aligned}
		\Psi_{1}&=\frac{1}{8}\left(1-\eta_{1}\right)\left(1-\eta_{2}\right)\left(1-\eta_{3}\right),&\Psi_{5}&=\frac{1}{8}\left(1-\eta_{1}\right)\left(1+\eta_{2}\right)\left(1+\eta_{3}\right),
		\\
		\Psi_{2}&=\frac{1}{8}\left(1+\eta_{1}\right)\left(1-\eta_{2}\right)\left(1-\eta_{3}\right),&\Psi_{6}&=\frac{1}{8}\left(1-\eta_{1}\right)\left(1-\eta_{2}\right)\left(1+\eta_{3}\right),
		\\
		\Psi_{3}&=\frac{1}{8}\left(1+\eta_{1}\right)\left(1+\eta_{2}\right)\left(1-\eta_{3}\right),&\Psi_{7}&=\frac{1}{8}\left(1+\eta_{1}\right)\left(1-\eta_{2}\right)\left(1+\eta_{3}\right),
		\\
		\Psi_{4}&=\frac{1}{8}\left(1-\eta_{1}\right)\left(1+\eta_{2}\right)\left(1-\eta_{3}\right),&\Psi_{8}&=\frac{1}{8}\left(1+\eta_{1}\right)\left(1+\eta_{2}\right)\left(1+\eta_{3}\right).
	\end{aligned}
\end{equation}
The bilinear and trilinear shape functions are polynomial functions; hence, the mapping from the reference quadrilateral and hexahedron to the quadrilateral and hexahedral subdomains in the split simplices is a polynomial of total degree two and three, respectively. This allows straightforward computation of the metric terms and entries of the mapping Jacobian matrix,
\begin{equation}
	\fn J_{ij}^{(\ell)}=\pder[\xi_{i}^{(\ell)}]{\eta_{j}}=\sum_{\alpha=1}^{2^{d}}P^{(\ell)}_{\alpha,i}\pder[\Psi_{\alpha}]{\eta_{j}},  \qquad \ell \in\{1,\dots,d+1\},
\end{equation} 
where $P^{(\ell)}_{\alpha,i} $ denotes the value of the $i$-th coordinate of the $\alpha$-th vertex on the $\ell$-th subdomain. The fact that the mapping in \cref{eq:bilinear trilinear mapping} is not affine has detrimental consequences on the accuracy of the TPSS operators, as discussed below in \cref{sec:acc and sparsity of TPSS}. 

Given a one-dimensional SBP operator, it is straightforward to generate tensor-product operators on the reference quadrilateral and hexahedral elements. For convenience, we will make the assumption that the tensor-product operator has the same number of nodes in each direction and that the operator belongs to the SBP diagonal-$\E$ family, but these assumptions can be relaxed if necessary. Let the one-dimensional SBP operators be represented by a subscript $\rm 1D$, \eg, $\H_{\rm 1D}$; then the tensor-product SBP operators on reference quadrilateral are given by 
\begin{equation}
	\medmuskip=-0mu
	\H=\H_{{\rm 1D}}\otimes\H_{{\rm 1D}},\quad\Q_{\eta_{1}}=\H_{{\rm 1D}}\otimes\Q_{{\rm 1D}},\quad\Q_{\eta_{2}}=\Q_{{\rm 1D}}\otimes\H_{{\rm 1D}},\quad\R_{\gamma_{1}}=\I_{n_1}\otimes\bm{t}_{L}^T,\quad\R_{\gamma_{4}}=\bm{t}_{R}^T\otimes\I_{n_1},
\end{equation}
where $\bm{t}_{L}$ and $\bm{t}_{R}$ are one-dimensional extrapolation column vectors of size $n_1$ with a value of unity at the first and last entries, respectively, $\gamma_{1}$ is the facet connecting nodes $ 4$ and $1$ in \cref{fig:quad hex nod numbering}, while $\gamma_{4}$ is the facet connecting nodes $3$ and $4$. The extrapolation operators for the remaining facets are obtained in a similar manner. Furthermore, the remaining operators, $\D_{\eta_{i}}$, $\E_{\eta_i}$, and $\S_{\eta_i}$ can be derived using the relations in \cref{def:sbp}. The tensor-product SBP operators on the hexahedron are derived in the same manner, \eg, 
\begin{equation}
	\H=\H_{{\rm 1D}}\otimes\H_{{\rm 1D}}\otimes\H_{{\rm 1D}},\quad\Q_{\eta_{1}}=\H_{{\rm 1D}}\otimes\H_{{\rm 1D}}\otimes\Q_{{\rm 1D}},\quad\R_{\gamma_{1}}=\I_{n_1}\otimes\I_{n_1}\otimes\bm{t}_{L}^T,
\end{equation}
where $\gamma_{1}$ is the facet defined by the vertices $1,2,7$ and $6$ in \cref{fig:quad hex nod numbering}. With the tensor-product SBP operators and metric terms computed, we can now construct the SBP operators on each of the quadrilateral and hexahedral subdomains in the split simplices using the relations given in \cref{eq:norm matrix,eq:normal,eq: Exik,eq:Sxik,eq:Qxik,eq:Dxik}.

TPSS-SBP operators are constructed by combining the respective tensor-product SBP operators on the subdomains of the reference simplices in such a way that the SBP property and accuracy are preserved. This is accomplished using the continuous-Galerkin-type assembly of Hicken \etal, \cite{hicken2016multidimensional,hicken2020entropy,hicken2021entropy}. One can also construct TPSS operators by splitting each physical element in a mesh, mapping tensor-product operators to the split subdomains, and assembling the resulting operators on the physical elements according to the procedure in \cite{hicken2016multidimensional}; however, this approach is not pursued in this work. \ignore{However, this approach would be more expensive compared to constructing TPSS operators on a single reference element and mapping them to the physical elements.  }

For completeness, we describe the operator assembly procedure of \cite{hicken2016multidimensional}. Before proceeding, however, we need to introduce some notation and operators. Let the set of nodes on the subdomain $\widehat{\Omega}^{(\ell)}$ be denoted by $S^{(\ell)} = \{\bm{\xi}^{(\ell,j)}\}_{j=1}^{n_1^d}$ and the set of nodes on the simplex be $S=\cup S^{(\ell)}$. Note that $S$ contains repeated nodes at the shared interfaces of the subdomains. We assign a global node index to each unique node and denote the total number of unique nodes by $n_p$ and the set containing them by $S_{\widehat{\Omega}}$. Let $\tilde{\imath}$ and $\tilde{\jmath}$ be the global node indices corresponding to the local node indices $i$ and $j$ on $\widehat{\Omega}^{(\ell)}$. Define the matrix $\Z^{(\ell)}(i,j) $ to be an $n_{p}\times n_{p}$ matrix with a value of unity at entry $ (\tilde{\imath},\tilde{\jmath})$ and zero elsewhere. The assembly procedure and preservation of the SBP property and accuracy are stated in the following theorem, the proof of which can be found in \cite{hicken2016multidimensional}. 
\begin{theorem}\label{thm:assembly}
	Let $\D^{(\ell)}_{\xi_{l}} =[ \H^{(\ell)}]^{-1}\Q_{\xi_{l}}^{(\ell)}$ be a degree $p$ SBP operator approximating the first derivative $\pder[]{\xi_{l}}$ on the nodal set $S^{(\ell)}$.  If we define 
	{
		\setlength{\abovedisplayskip}{0pt}
		\begin{align*}
			\H &\coloneqq \sum_{\ell=1}^{L} \sum_{i=1}^{n_1^d} \sum_{j=1}^{n_1^d}\H_{ij}^{(\ell)} \Z^{(\ell)}(i,j), \\
			\Q_{\xi_{l}} &\coloneqq \sum_{\ell=1}^{L} \sum_{i=1}^{n_1^d} \sum_{j=1}^{n_1^d}(\Q_{\xi_{l}}^{(\ell)})_{ij} \Z^{(\ell)}(i,j),
		\end{align*}
	}
	then $\D_{\xi_{l}}=\H^{-1}\Q_{\xi_{l}} $ is a degree $p$ SBP operator on the global nodal set $S_{\widehat{\Omega}}$.
\end{theorem}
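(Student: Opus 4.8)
The plan is to verify, one at a time, the three defining properties of a degree $p$ SBP operator in \cref{def:sbp} for the assembled pair $(\H,\Q_{\xi_l})$, working from two elementary consequences of the scatter construction: the transpose identity $\Z^{(\ell)}(i,j)^{T}=\Z^{(\ell)}(j,i)$, and the gather identity $[\Q_{\xi_l}\bm{v}]_{\tilde m}=\sum_{\ell:\,\tilde m\in S^{(\ell)}}[\Q_{\xi_l}^{(\ell)}\bm{v}^{(\ell)}]_{i}$ valid for any grid vector $\bm{v}$, where $i=i(\ell,\tilde m)$ denotes the local index on $\widehat{\Omega}^{(\ell)}$ of the global node $\tilde m$ and $\bm{v}^{(\ell)}$ is the restriction of $\bm{v}$ to $S^{(\ell)}$; the analogous gather holds for $\H$. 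Property~2 then follows at once: because the operators are diagonal-norm, each $\H^{(\ell)}$ is diagonal with strictly positive diagonal, so $\H$ is diagonal with $[\H]_{\tilde m\tilde m}=\sum_{\ell:\,\tilde m\in S^{(\ell)}}[\H^{(\ell)}]_{ii}>0$, hence SPD, and $\D_{\xi_l}=\H^{-1}\Q_{\xi_l}$ is well defined.

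Second, I would establish the degree-$p$ accuracy (property~1). Fix $\fnc{P}\in\polyref{p}$ and let $\bm{p}$ hold its values on $S_{\widehat{\Omega}}$. The restriction of $\fnc{P}$ to any subdomain remains a degree-$p$ polynomial in $\bm{\xi}$, on which the hypothesis that $\D_{\xi_l}^{(\ell)}$ is a degree-$p$ SBP operator gives $\Q_{\xi_l}^{(\ell)}\bm{p}^{(\ell)}=\H^{(\ell)}\,(\partial\fnc{P}/\partial\xi_l)|_{S^{(\ell)}}$. Since $\H^{(\ell)}$ is diagonal and $\partial\fnc{P}/\partial\xi_l$ takes a single value at each shared node, the gather identity collapses to $[\Q_{\xi_l}\bm{p}]_{\tilde m}=(\partial\fnc{P}/\partial\xi_l)(\bm{\xi}^{(\tilde m)})\sum_{\ell:\,\tilde m\in S^{(\ell)}}[\H^{(\ell)}]_{ii}=[\H]_{\tilde m\tilde m}\,(\partial\fnc{P}/\partial\xi_l)(\bm{\xi}^{(\tilde m)})$. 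Multiplying by $\H^{-1}$ yields $[\D_{\xi_l}\bm{p}]_{\tilde m}=(\partial\fnc{P}/\partial\xi_l)(\bm{\xi}^{(\tilde m)})$, which is property~1. The diagonal-norm assumption is exactly what lets this single-node argument close.

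Third, for the SBP property (property~3) I would combine the transpose identity with the local relation $\Q_{\xi_l}^{(\ell)}+(\Q_{\xi_l}^{(\ell)})^{T}=\E_{\xi_l}^{(\ell)}$ to get $\Q_{\xi_l}+\Q_{\xi_l}^{T}=\sum_{\ell}\sum_{i,j}(\E_{\xi_l}^{(\ell)})_{ij}\Z^{(\ell)}(i,j)$, namely the assembly of the subdomain boundary operators. The task is to show that this reduces to the simplex boundary operator. Every subdomain facet is either an interior interface, shared with exactly one neighbour, or lies on $\widehat{\Gamma}$. On a shared interface the assembly identifies the coincident facet nodes while the two subdomain outward normals are equal and opposite, so the paired contributions cancel entry-by-entry in the global matrix and only the facets on $\widehat{\Gamma}$ survive. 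Because these surviving facets tile each facet of the simplex and carry matching degree-$r$ surface quadratures, their assembly reproduces $\bm{p}^{T}\E_{\xi_l}\bm{q}=\sum_{\widehat{\gamma}\in\widehat{\Gamma}}\int_{\widehat{\gamma}}\fnc{P}\fnc{Q}\,n_{\xi_l}\dd{\Gamma}$ for all $\fnc{P},\fnc{Q}\in\polyref{r\ge p}$. Setting $\S_{\xi_l}:=\tfrac{1}{2}(\Q_{\xi_l}-\Q_{\xi_l}^{T})$, which is skew-symmetric by construction, and $\E_{\xi_l}:=\Q_{\xi_l}+\Q_{\xi_l}^{T}$ then furnishes the decomposition $\Q_{\xi_l}=\S_{\xi_l}+\tfrac{1}{2}\E_{\xi_l}$ demanded by property~3.

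I expect the genuine obstacle to be this last step—verifying that the interior-interface contributions cancel exactly at the matrix level and that the surviving operator is a bona fide degree-$r$ surface operator for the simplex. The cancellation hinges on three geometric facts that must be checked for the specific splittings in \cref{fig:simplex splitting}: the facet nodes on each shared interface coincide after the bilinear or trilinear maps, the two induced surface measures agree, and the two outward normals are exactly opposite. Pinning these down carefully—rather than the essentially bookkeeping verifications behind properties~1 and~2—is where the real content lies, and it is precisely the argument carried out in \cite{hicken2016multidimensional}.
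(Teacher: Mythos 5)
First, a point of reference: the paper does not actually prove \cref{thm:assembly}; it states the result and defers the proof to \cite{hicken2016multidimensional}. So your proposal must be judged against that argument and against what the theorem's hypotheses actually support.

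Your verification of properties 1 and 2 of \cref{def:sbp} is correct. The transpose and gather identities are the right elementary tools, the assembled $\H$ is diagonal with strictly positive entries, and the collapse $[\Q_{\xi_l}\bm{p}]_{\tilde m}=[\H]_{\tilde m\tilde m}\,(\partial\fnc{P}/\partial\xi_l)(\bm{\xi}^{(\tilde m)})$ is exactly the computation needed (in fact it survives for dense-norm $\H^{(\ell)}$ as well, since $[\H\bm{p}']_{\tilde m}$ gathers the same way, so diagonality is a convenience rather than the essential ingredient you claim it to be).

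The problem is property 3, where you have misidentified the crux. The theorem only requires exhibiting $\Q_{\xi_l}=\S_{\xi_l}+\tfrac12\E_{\xi_l}$ with $\S_{\xi_l}$ antisymmetric and $\E_{\xi_l}$ satisfying the bilinear-form accuracy condition of \cref{def:sbp}. Taking $\E_{\xi_l}\coloneqq\Q_{\xi_l}+\Q_{\xi_l}^T$, your own transpose identity gives
\begin{equation*}
\bm{p}^T\E_{\xi_l}\bm{q}=\sum_{\ell=1}^{L}\big(\bm{p}^{(\ell)}\big)^T\E^{(\ell)}_{\xi_l}\bm{q}^{(\ell)}
=\sum_{\ell=1}^{L}\oint_{\partial\widehat{\Omega}^{(\ell)}}\fnc{P}\fnc{Q}\,n_{\xi_l}\dd{\Gamma},
\end{equation*}
where the second equality is the \emph{local} property 3 applied to the restrictions of $\fnc{P},\fnc{Q}$, which remain polynomials of the same degree on each subdomain. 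The interior-interface contributions now cancel \emph{analytically}: each shared interface is integrated twice with the same integrand $\fnc{P}\fnc{Q}$ and opposite unit normals, so the sum telescopes to $\sum_{\widehat{\gamma}\in\widehat{\Gamma}}\int_{\widehat{\gamma}}\fnc{P}\fnc{Q}\,n_{\xi_l}\dd{\Gamma}$. No matrix-level cancellation, no node conformity at the interfaces, no agreement of discrete surface measures, and no facet-wise decomposition of the local $\E^{(\ell)}_{\xi_l}$ is needed --- indeed, \cref{def:sbp} grants only the integral property over the whole subdomain boundary, so your route rests on hypotheses the theorem does not supply. The matrix-level cancellation that you flag as ``the genuine obstacle'' and defer to \cite{hicken2016multidimensional} is in fact irrelevant to this theorem: it is what the paper addresses separately, \emph{after} the theorem, in order to show that the assembled $\E_{\xi_i}$ is diagonal and admits the decomposition in \cref{eq: Exik}, and the paper is explicit that \cref{thm:assembly} does not provide that decomposition. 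In short, your proposal handles the easy parts correctly but leaves unproven --- and outsources to the reference --- precisely the step that is elementary once argued at the bilinear-form level rather than the matrix level.
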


Simply stated, \cref{thm:assembly} implies that shuffling the SBP operators on each subdomain, $\widehat{\Omega}^{(\ell)}$, in a manner consistent with the global node ordering and adding the resulting matrices will produce an accurate SBP operator on the reference simplex element, $\widehat{\Omega}$. The $\Z^{(\ell)}(i,j)$ matrix is a binary matrix that rearranges the entries of the SBP operators such that they can be applied to the global nodes. To illustrate the procedure with an example, consider the construction of the TPSS-SBP operator on the reference triangle using the degree $p=1$ one-dimensional Legendre-Gauss-Lobatto (LGL) operator.
\begin{figure}[t]
	\centering
	\begin{subfigure}{0.5\textwidth}
		\centering
		\includegraphics[scale=0.28]{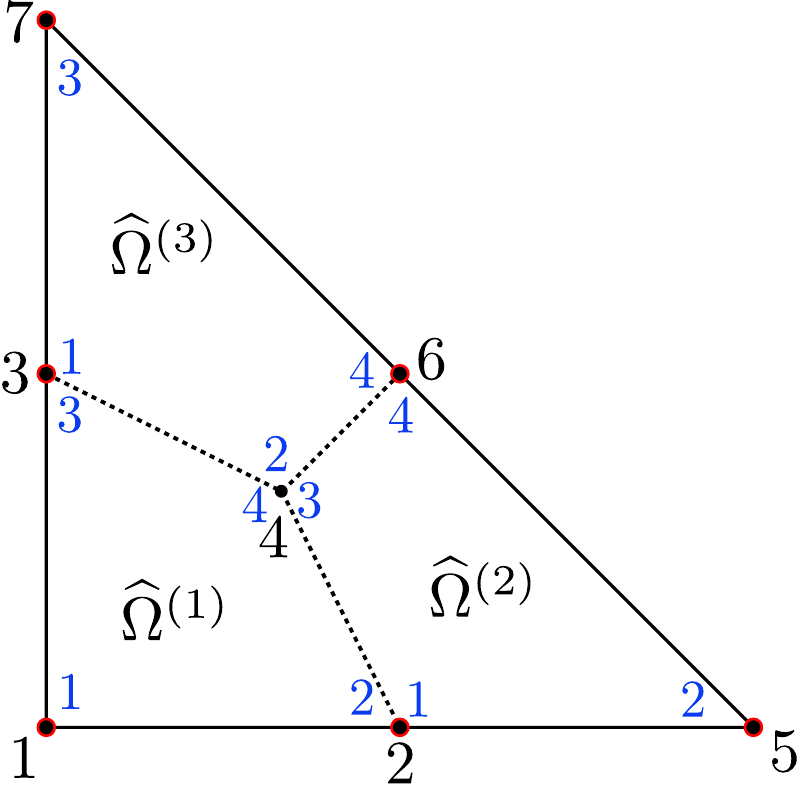}
	\end{subfigure}
	\caption{\label{fig:node ordering} Local and global node ordering for construction of the TPSS-SBP operator using the degree $p=1$ LGL one-dimensional operator. The global node ordering is shown in larger font size.}
\end{figure}
As an example, we show the $\Q_{\xi_{1}}^{(2)}$ matrix on the quadrilateral $\widehat{\Omega}^{(2)}$ in \cref{fig:node ordering} and its reordering after applying the $\Z^{(2)}(i,j)$ matrix, accurate to four decimal places, 
\begin{equation}
	\small
	\setlength{\arraycolsep}{3pt}
	\begin{aligned}
		\Q_{\xi_{1}}^{(2)} = 
		\begin{bmatrix*}[l]
			-0.1667 & 0.2083 & -0.0417 & 0 \\
			-0.2083 & 0.25 & 0 & -0.0417 \\
			0.0417 & 0 & -0.25 & 0.2083 \\
			0 & 0.0417 & -0.2083 & 0.1667 \\
		\end{bmatrix*},
		\qquad
		\Q_{g}^{(2)}=
		\begin{bmatrix*}[l]
			0 & 0 & 0 & 0 & 0 & 0 & 0 \\
			0 & -0.1667 & 0 & -0.0417 & 0.2083 & 0 & 0 \\
			0 & 0 & 0 & 0 & 0 & 0 & 0 \\
			0 & 0.0417 & 0 & -0.25 & 0 & 0.2083 & 0 \\
			0 & -0.2083 & 0 & 0 & 0.25 & -0.0417 & 0 \\
			0 & 0 & 0 & -0.2083 & 0.0417 & 0.1667 & 0\\
			0 & 0 & 0 & 0 & 0 & 0 & 0 \\
		\end{bmatrix*},
	\end{aligned}
\end{equation}
where $\Q_{g}^{(2)} \coloneqq \sum_{i=1}^{4} \sum_{j=1}^{4}(\Q_{\xi_{1}}^{(2)})_{ij} \Z^{(2)}(i,j)$. We note that $[\Q_{\xi_{1}}^{(2)}]_{1,1}$ is placed at $ [\Q_{g}^{(2)}]_{2,2}$, $[\Q_{\xi_{1}}^{(2)}]_{1,2}$ is placed at $ [\Q_{g}^{(2)}]_{2,5}$ and so on, such that when $\Q_{g}^{(2)}$ is applied to a grid function defined at the global nodes, it produces the derivative of the vector at the nodes of $\widehat{\Omega}^{(2)}$, \eg, see the node local and global node numbering in \cref{fig:node ordering}. 

\cref{thm:assembly} provides enough information to construct the remaining SBP matrices, $\E_{\xi_i}= \Q_{\xi_{i}}+\Q_{\xi_{i}}^T$ and $\S_{\xi_i}=\Q_{\xi_{i}}-\frac{1}{2}\E_{\xi_i}$. However, it does not specify whether the $\E_{\xi_i}$ matrix can be decomposed as in \cref{eq: Exik}. Nonetheless, the decomposition is possible due to the presence of nodes at the facets of each split subdomain and the exactness of the $\R_{\gamma}^{(\ell)}$ matrix at those facets, which produces diagonal $\E_{\xi_{i}}$ matrices. That is, since $\R_{\gamma}^{(\ell)}$ is exact and the nodes at the shared interfaces are conforming, the normal vectors at the shared nodes are equal but of opposite signs, and thus $(\R_{\gamma}^{(\ell)})^T\N_{\xi_{i}\gamma}^{(\ell)}\B_\gamma^{(\ell)}\R_{\gamma}^{(\ell)}$ cancels exactly at the interior interfaces, leaving a diagonal $\E_{\xi_i}$ matrix with nonzero values only at the diagonal entries corresponding to the nodes located at exterior facets of the simplices. Therefore, the $\R_\gamma\in\IR{n_{f}\times n_{p}}$ matrix for each facet of the simplices can be constructed such that it contains an entry equal to unity at the row and column indices corresponding to the facet and global volume node numbering, respectively, and all its remaining entries are zero. Furthermore, the $\B_{\gamma}$ matrix at the facets of the simplices is constructed the same way as the $\H$ matrix, except restricted to the facet partitioning, 
\begin{equation}
	\B_{\gamma} \coloneqq \sum_{\ell=1}^{d} \sum_{i=1}^{n_1^{d-1}} \sum_{j=1}^{n_1^{d-1}}(\B_{\gamma}^{(\ell)})_{ij} \Z_{\gamma}^{(\ell)}(i,j),
\end{equation} 
where in this case $\ell$ refers to index of the line and quadrilateral subdomains on the facets of the triangle and tetrahedron, respectively. The matrix $\Z_{\gamma}^{(\ell)}(i,j)$ is constructed in a similar manner as $\Z^{(\ell)}(i,j)$ but using the local and global facet node numbering. Finally,  $\N_{\xi_{i}\gamma} \in \IR{n_f\times n_f}$, which is used in the construction of the $\Exi$ matrix, as in \cref{eq: Exik}, is a diagonal matrix containing along its diagonal the value of the outward-pointing unit normal vector on facet $\gamma$ of the reference simplex element. 

\section{Types of TPSS operators}

The application of one-dimensional operators on simplicial elements presents multiple opportunities to enhance the efficiency of multidimensional SBP operators. For example, TPSS operators can be derived using various well-studied one-dimensional SBP operators, such as the optimized operators of Mattsson \etal, \cite{mattson2014optimal} and Diener \etal \cite{diener2007optimized}, or the recent operators of Glaubitz \etal \cite{glaubitz2023summation} developed using nonpolynomial basis functions. The TPSS operators constructed in this work belong to the SBP diagonal-$\E$ family \cite{chen2017entropy}, which is characterized by having collocated facet and volume nodes\footnote{In this work, volume nodes, solution nodes, and quadrature nodes are used interchangeably, as they are collocated.}. Construction of TPSS operators that do not have volume nodes on the element facets, such as the Legendre-Gauss (LG) operators which belong to the SBP-$\Omega$ family of operators \cite{fernandez2018simultaneous}, is also possible, but application of SATs with such operators is not straightforward. This is because simultaneously satisfying the accuracy condition, $\S_{\xi_i}\V = \H\V_{\xi_i^\prime} -\frac{1}{2}\E_{\xi_i}\V$, and the decomposition of $\E_{\xi_i}$, given in \cref{eq: Exik}, on the simplicial element is not straightforward. One can possibly use similar strategies for the decomposition of the $\E_{\xi_i}$ matrix as in \cite{hicken2016multidimensional}, but this is not pursued in this work. We will primarily focus on TPSS operators that are derived using the one-dimensional LGL operators, but we have also derived TPSS operators based on classical SBP (CSBP) operators \cite{kreiss1974finite,strand1994summation,fernandez2014review,svard2014review}. Examples of the LGL-TPSS and CSBP-TPSS operators on triangles and tetrahedra are shown in \cref{fig:TPSS oper}.

In addition to enabling the use of various types of one-dimensional SBP operator on simplices, TPSS operators can also provide an alternative approach for $h$-refinement, see \cref{fig:csbp p1,fig:csbp p1 refined}. CSBP type finite-difference methods allow mesh refinements by adding interior nodes without increasing the degree of the operator, and TPSS operators inherit this property, enabling a similar type of mesh refinement on simplices. Although they produce sparse matrices, the CSBP-TPSS operators have a very large number of nodes, especially in three dimensions. While these operators can be better suited for use on meshes with large simplicial elements resembling multi-block meshes typically used in traditional finite-difference methods, further research is required to quantify their efficiency and other potential benefits. 

\begin{figure}[t]
	\centering
	\begin{subfigure}{0.33\textwidth}
		\centering
		\includegraphics[scale=0.28]{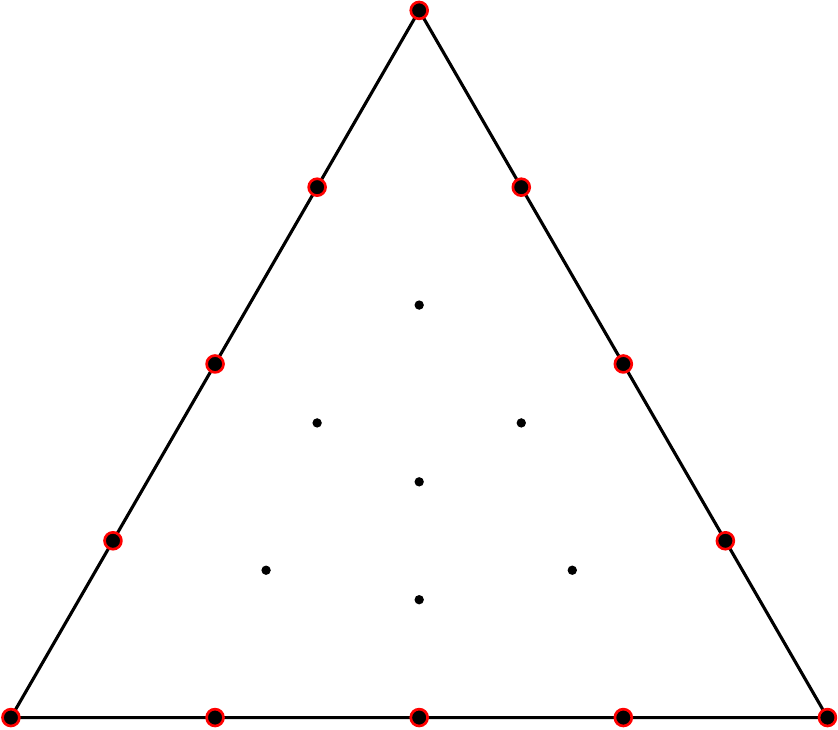}
		\caption{$p=1$, $n_p=19$, LGL-TPSS}
	\end{subfigure}\hfill
	\begin{subfigure}{0.33\textwidth}
		\centering
		\includegraphics[scale=0.28]{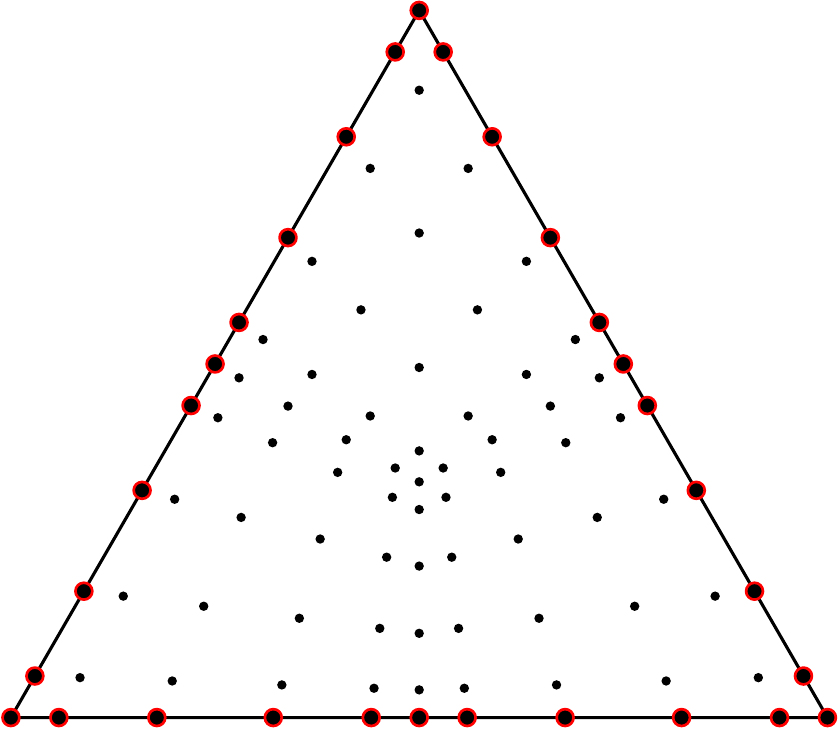}
		\caption{$p=4$, $n_p=91$ LGL-TPSS}
	\end{subfigure}\hfill
	\begin{subfigure}{0.33\textwidth}
		\centering
		\includegraphics[scale=0.091]{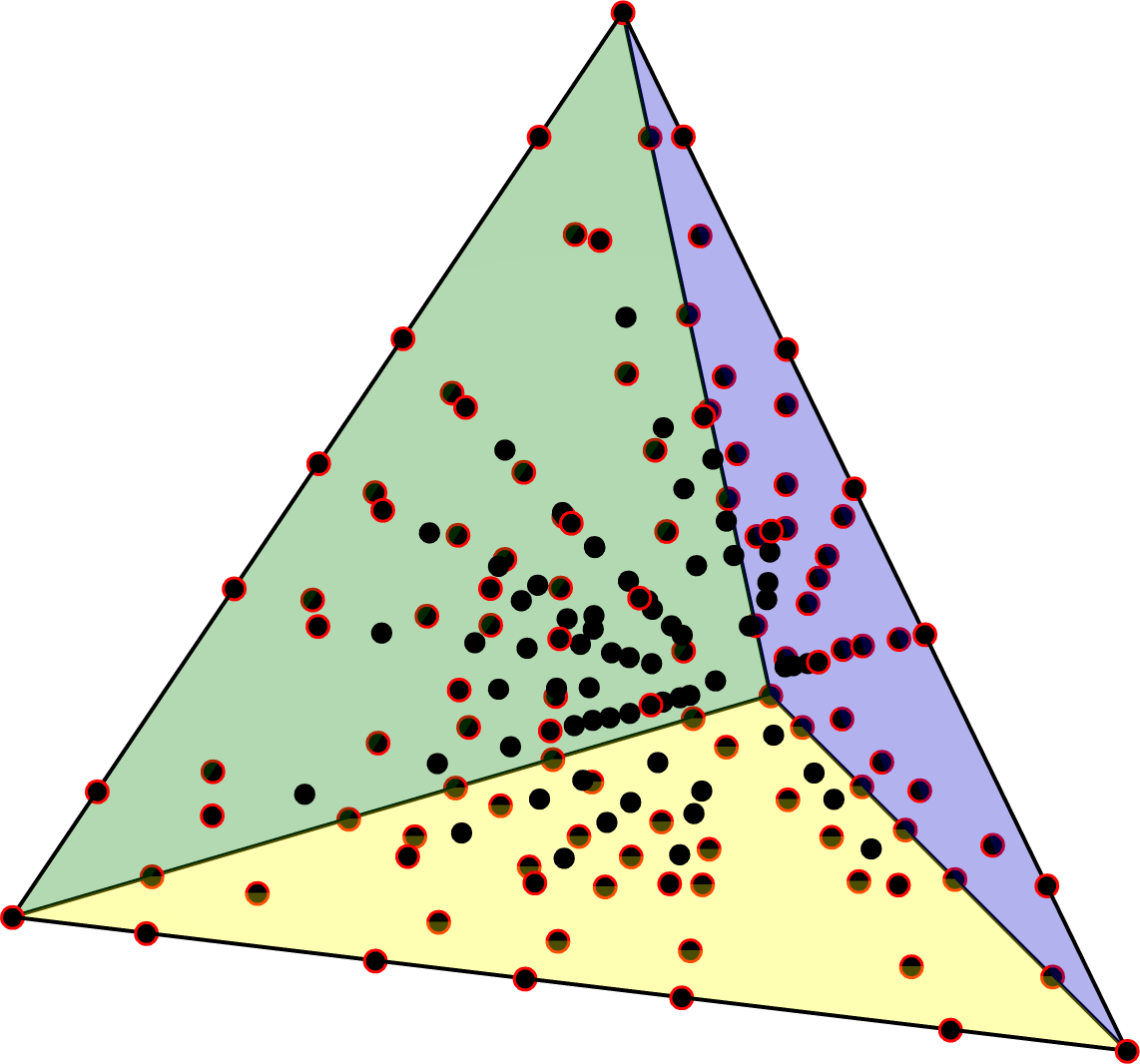}
		\caption{$p=1$, $n_p=175$ LGL-TPSS}
	\end{subfigure}
	\\
	\begin{subfigure}{0.33\textwidth}
		\centering
		\includegraphics[scale=0.28]{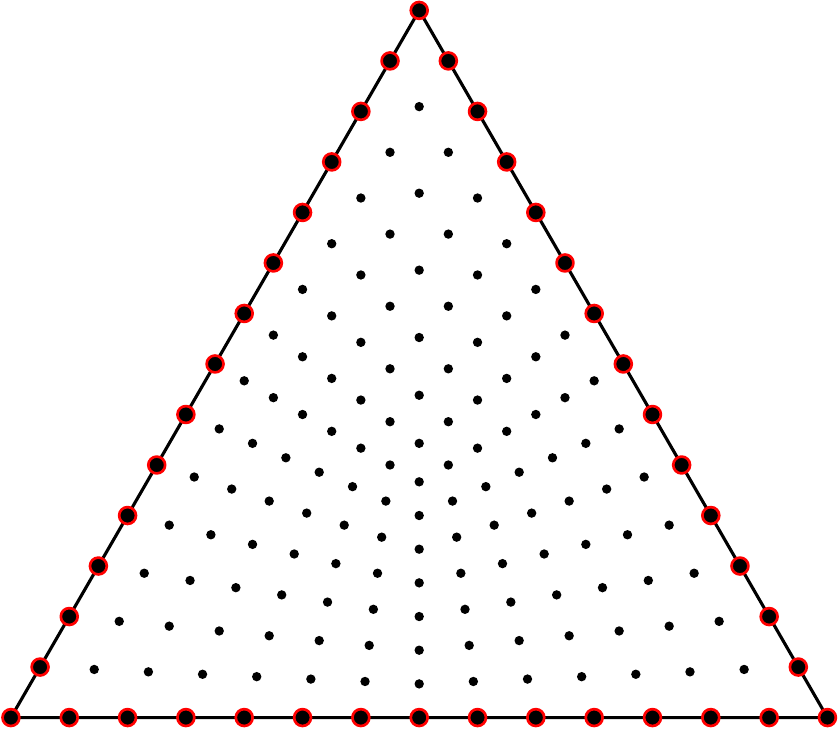}
		\caption{\label{fig:csbp p1}$p=1$, $n_p=169$ CSBP-TPSS}
	\end{subfigure}\hfill
	\begin{subfigure}{0.33\textwidth}
		\centering
		\includegraphics[scale=0.28]{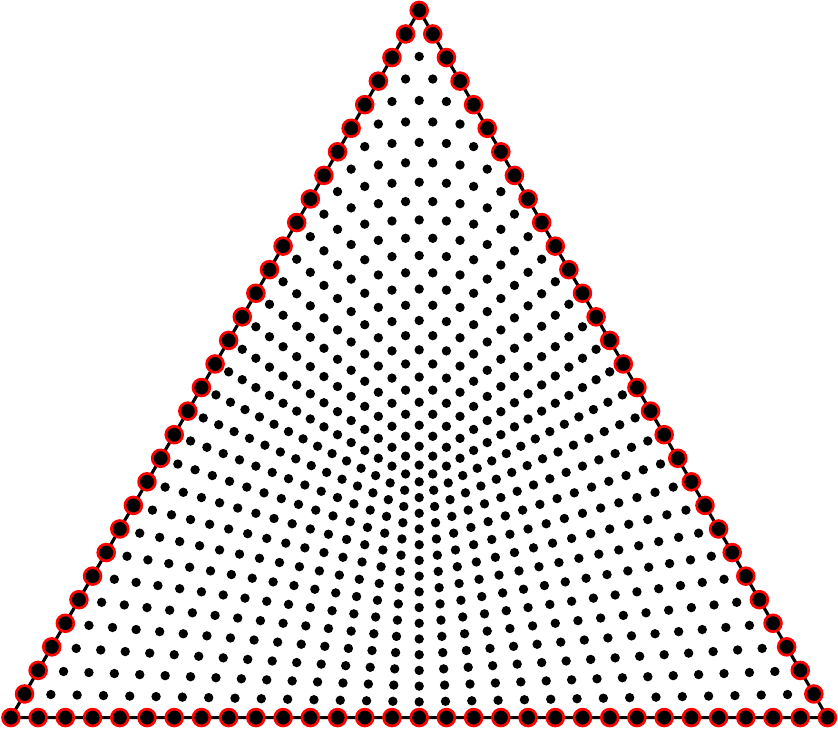}
		\caption{\label{fig:csbp p1 refined}$p=1$, $n_p=721$ CSBP-TPSS (refined)}
	\end{subfigure}\hfill
	\begin{subfigure}{0.33\textwidth}
		\centering
		\includegraphics[scale=0.04]{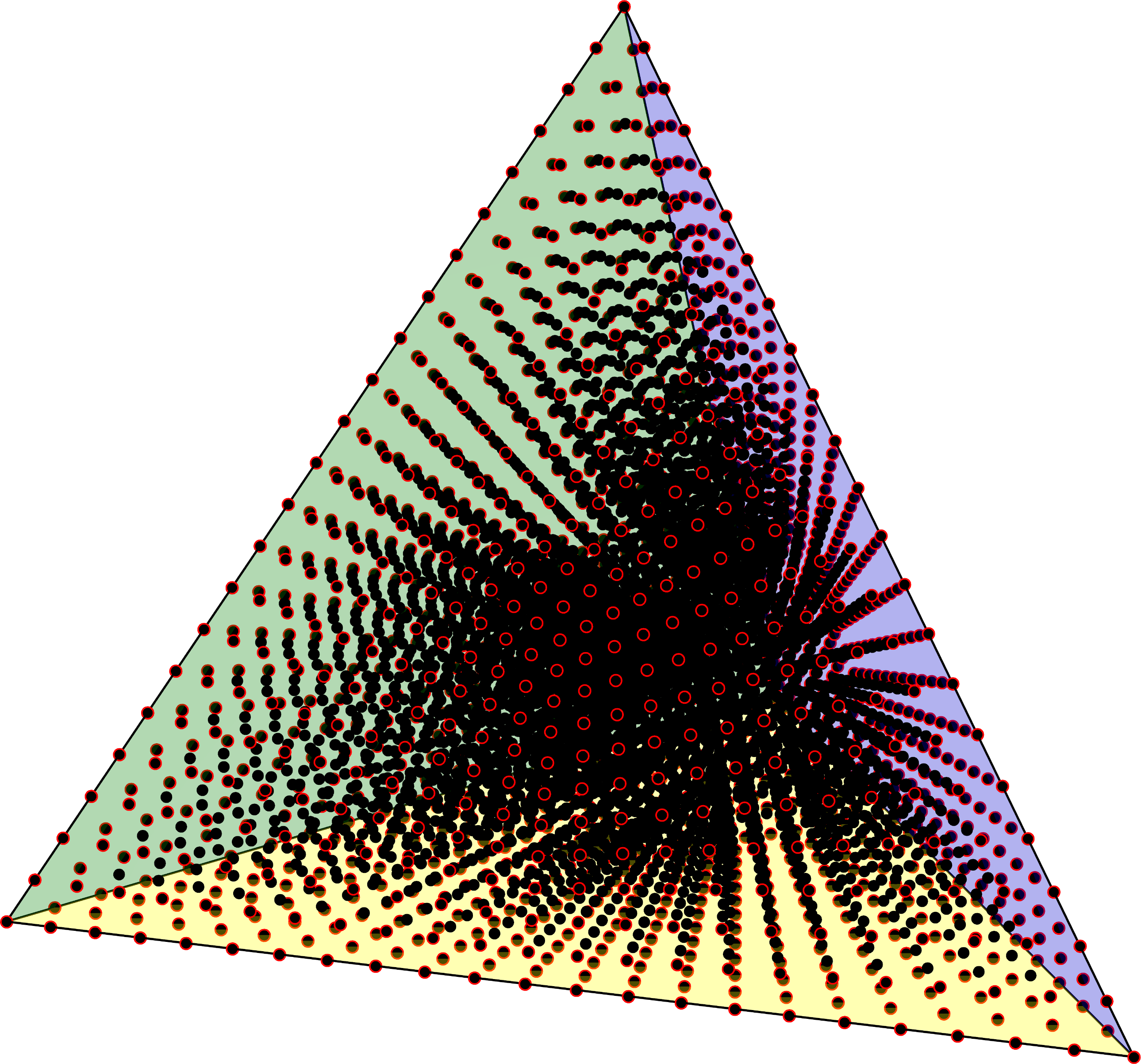}
		\caption{$p=1$, $n_p=6095$ CSBP-TPSS}
	\end{subfigure}
	\caption{\label{fig:TPSS oper} Examples of LGL and CSBP type TPSS operators. The degree $p$ TPSS operators are constructed using degree $p+d-1$ one-dimensional operators. The symbols $\sbullet$ and $\red{\bm{\circ}}$ denote the collocated volume and facet nodes, respectively.}
\end{figure}
\section{Accuracy and sparsity of TPSS operators}\label{sec:acc and sparsity of TPSS}
Although splitting of simplicial meshes guarantees generation of quadrilateral/hexahedral meshes \cite{pietroni2022hex}, the approach is generally avoided for finite-element analysis due to the resulting poor quality elements \cite{owen1998survey}. While the method presented in this work is distinct from directly splitting the physical simplex elements and applying tensor-product operators, further investigation is necessary to study its properties for practical meshes as there is significant similarity between the approaches. That being said, however, we have found the TPSS operators to be substantially more efficient compared to many of the existing multidimensional SBP operators, as will be shown through a number of numerical studies. Their efficiency emanates from their accuracy and sparsity as discussed below. 

\subsection{Accuracy}
The accuracy of TPSS operators depends on the mapping between the reference quadrilateral/hexahedral element and the quadrilateral/hexahedral subdomains in the split simplex reference element. The following theorem establishes the accuracy of the TPSS-SBP operators on the reference simplices. 
\begin{theorem} \label{thm:acc TPSS}
	A TPSS derivative operator, $\D_{\xi_{i}}$, on the reference triangle or tetrahedron, $\widehat{\Omega}$, constructed by mapping a degree $p\ge d-1$ tensor-product SBP diagonal-$\E$ type operator, $\D_{\eta_{i}}$, from the quadrilateral or hexahedral reference element, $\widetilde{\Omega}$, to the split quadrilateral or hexahedral subdomains
	of $\widehat{\Omega}$ and assembled using the procedure described
	in \cref{thm:assembly} is exact for polynomials up
	to degree $p-d+1$, i.e., 
	\begin{equation}\label{eq:acc TPSS}
		\left[\D_{\xi_{i}}\bm{p}\right]_{m}=\pder[\fn P]{\xi_{i}}(\bm{\xi}^{(m)}),\qquad\forall\bm{\xi}^{(m)}\in S_{\widehat{\Omega}},\qquad\forall\fn P\in\mathbb{P}^{p-d+1}(\widehat{\Omega}).
	\end{equation}
\end{theorem}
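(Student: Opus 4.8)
The plan is to prove the result first on a single subdomain and then assemble with \cref{thm:assembly}. By the construction in \cref{eq:norm matrix,eq:normal,eq: Exik,eq:Sxik,eq:Qxik,eq:Dxik}, each mapped operator $\D_{\xi_i}^{(\ell)}=(\H^{(\ell)})^{-1}\Q_{\xi_i}^{(\ell)}$ on $\widehat{\Omega}^{(\ell)}$ automatically meets the structural parts of \cref{def:sbp}: $\H^{(\ell)}$ is positive diagonal since $[\H^{(\ell)}]_{mm}=[\H]_{mm}|\fn{J}^{(\ell,m)}|>0$, $\S_{\xi_i}^{(\ell)}$ is skew-symmetric, and $\Q_{\xi_i}^{(\ell)}+(\Q_{\xi_i}^{(\ell)})^{T}=\E_{\xi_i}^{(\ell)}$. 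It therefore suffices to establish the accuracy condition, namely that $\D_{\xi_i}^{(\ell)}$ reproduces $\pder[\fn P]{\xi_i}$ at the nodes for every $\fn P\in\mathbb{P}^{p-d+1}(\widehat{\Omega}^{(\ell)})$. Granting this, each $\D_{\xi_i}^{(\ell)}$ is a degree-$(p-d+1)$ SBP operator on $S^{(\ell)}$, and \cref{thm:assembly} applied with $p$ replaced by $p-d+1$ delivers a degree-$(p-d+1)$ SBP operator on $S_{\widehat{\Omega}}$, i.e.\ \cref{eq:acc TPSS}.

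The crux is a per-coordinate degree count. Because the vertex map $\bm{\xi}^{(\ell)}(\bm{\eta})$ is bilinear in two dimensions and trilinear in three, each component $\xi_i^{(\ell)}$ is multilinear in $\bm{\eta}$; hence, for $\fn P$ of total degree $q$, the pulled-back function $\widetilde{\fn P}\coloneqq\fn P\circ\bm{\xi}^{(\ell)}$ has degree at most $q$ in each single coordinate $\eta_m$. Writing $\Lambda_j$ for the diagonal matrix of nodal values of the metric cofactor $|\fn{J}^{(\ell)}|\pder[\eta_j]{\xi_i}$, I observe that $\Lambda_j$ is a $(d-1)\times(d-1)$ minor of $\fn{J}^{(\ell)}$ built only from Jacobian entries $\pder[\xi_{i'}]{\eta_{j'}}$ with $j'\neq j$, each of degree at most one in $\eta_j$, so $\Lambda_j$ has degree at most $d-1$ in $\eta_j$. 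The decisive fact is that a tensor-product operator $\D_{\eta_j}$ differentiates exactly any grid function sampled from a function of degree at most $p$ in the single variable $\eta_j$, irrespective of the degrees in the other coordinates. Thus $\Lambda_j\widetilde{\fn P}$, of degree at most $q+(d-1)$ in $\eta_j$, is differentiated exactly by $\D_{\eta_j}$ precisely when $q+(d-1)\le p$, which is the source of the bound $q\le p-d+1$.

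With these ingredients, exactness follows from the split form. Expanding $\Q_{\xi_i}^{(\ell)}\bm{p}=\S_{\xi_i}^{(\ell)}\bm{p}+\tfrac12\E_{\xi_i}^{(\ell)}\bm{p}$ via \cref{eq:Sxik} and eliminating the transposed operator with the SBP property $\Q_{\eta_j}^{T}=\E_{\eta_j}-\Q_{\eta_j}$, and using that the diagonal matrices $\H$, $\Lambda_j$, and (for diagonal-$\E$ operators) $\E_{\eta_j}$ all commute and act pointwise, the skew term reorganizes into an averaged conservative/non-conservative discretization. The chain-rule identity $\sum_j\Lambda_j\pder[\widetilde{\fn P}]{\eta_j}=|\fn{J}^{(\ell)}|\pder[\fn P]{\xi_i}$ together with the metric identity $\sum_j\pder[\Lambda_j]{\eta_j}=0$, the latter holding as a polynomial identity because the cofactor matrix is divergence-free and therefore exact at the nodes, then reduce the volume part to $\H^{(\ell)}$ applied to the nodal values of $\pder[\fn P]{\xi_i}$, leaving a surface remainder $\tfrac12\sum_j\E_{\eta_j}\Lambda_j\bm{p}$. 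This remainder is annihilated by $\tfrac12\E_{\xi_i}^{(\ell)}\bm{p}$, since \cref{eq:normal,eq: Exik} make $\E_{\xi_i}^{(\ell)}$ diagonal with $[\E_{\xi_i}^{(\ell)}]_{mm}=\sum_j\Lambda_j^{(m)}[\E_{\eta_j}]_{mm}$. Hence $\Q_{\xi_i}^{(\ell)}\bm{p}=\H^{(\ell)}\bm{w}$, where $\bm{w}$ holds the nodal values of $\pder[\fn P]{\xi_i}$, which is the sought accuracy statement.

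The main obstacle is the degree bookkeeping that yields the sharp exponent $p-d+1$ rather than a pessimistic total-degree estimate. It requires simultaneously exploiting (i) the per-coordinate exactness of tensor-product operators, (ii) the degree-$(d-1)$-in-$\eta_j$ bound on the cofactor $\Lambda_j$, and (iii) the fact that the skew-symmetric split form, mandated by the SBP property, is precisely what introduces the conservative term $\D_{\eta_j}(\Lambda_j\bm{p})$ and thereby the binding constraint $q+(d-1)\le p$. The remaining checks, verifying the pointwise cancellation of the surface term for the particular splitting and confirming that the metric identities hold exactly on the node set, are routine consequences of the polynomial nature of the bilinear/trilinear maps, which guarantees that all relevant quantities are exactly represented and exactly differentiated.
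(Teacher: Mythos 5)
Your proposal is correct and follows essentially the same route as the paper's proof: exactness is established on a single split subdomain via the skew-symmetric (split) form of the mapped operator, the SBP property eliminates the transposed $\Q_{\eta_j}$, the multilinearity of the vertex map and the degree-$(d-1)$ bound on the metric cofactors give the binding constraint $q+(d-1)\le p$, the metric (Piola) identity kills the extra product-rule term, the diagonal-$\E$ structure cancels the surface remainder, and \cref{thm:assembly} finishes the argument. Your per-coordinate degree bookkeeping is in fact a slightly sharper rendering of the paper's looser claim that polynomials in $\bm{\xi}$ ``remain polynomials of the same degree'' in $\bm{\eta}$, and your general cofactor-minor argument replaces the paper's explicit 2D/3D examples, but these are refinements of the same proof, not a different one.
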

\begin{proof}
	The proof follows a similar strategy as the proof of Theorem 9 in
	\cite{crean2018entropy}. We will show the accuracy of the derivative
	operator on one of the split subdomains, $\widehat{\Omega}^{(\ell)}$,
	of the simplex, as the accuracy of the assembled derivative operator
	follows from \cref{thm:assembly}. Let $\Lambda_{\eta_{j},\xi_{i}}$
	and $\J$ be diagonal matrices containing the evaluations of $\left|\fn J\right|\pder[\eta_{j}]{\xi_{i}}$
	and $\left|\fn J\right|$ at the nodes of $S^{(\ell)}$, respectively,
	$\bm{p}^{(\ell)}$be the restriction of $\fn P$ onto the nodes $S^{(\ell)}$,
	and $\tilde{\H}$ be the norm matrix of $\widetilde{\Omega}$. Using the relations in \cref{eq:Sxik,eq:Qxik,eq:Dxik}, we can write 
	\begin{align}
		\D_{\xi_{i}}^{(\ell)}\bm{p}^{(\ell)} & =\frac{1}{2}\J^{-1}\tilde{\H}^{-1}\left[\sum_{j=1}^{d}\Lambda_{\eta_{j},\xi_{i}}\Q_{\eta_{j}}-\sum_{j=1}^{d}\Q_{\eta_{j}}^{T}\Lambda_{\eta_{j},\xi_{i}}+\E_{\xi_{i}}^{(\ell)}\right]\bm{p}^{(\ell)}\nonumber \\
		& =\frac{1}{2}\J^{-1}\sum_{j=1}^{d}\Lambda_{\eta_{j},\xi_{i}}\D_{\eta_{j}}\bm{p}^{(\ell)}+\frac{1}{2}\J^{-1}\sum_{j=1}^{d}\D_{\eta_{j}}\Lambda_{\eta_{j},\xi_{i}}\bm{p}^{(\ell)}+\frac{1}{2}\J^{-1}\tilde{\H}^{-1}\bigg(\E_{\xi_{i}}^{(\ell)}-\sum_{j=1}^{d}\E_{\eta_{j}}\Lambda_{\eta_{j},\xi_{i}}\bigg)\bm{p}^{(\ell)},\label{eq:acc1}
	\end{align}
	where we have applied the SBP property to the second term on the RHS of the first equality to arrive at the second line. The last term in (\ref{eq:acc1}) is identically zero due to the exactness
	of the extrapolation matrices and because $\E_{\xi_{i}}^{(\ell)}$ and $\E_{\eta_{j}}$ are diagonal, see \cref{eq: Exik}. Additionally, we note that due to the specific mappings in \cref{eq:bilinear Psi} and \cref{eq:trilinear Psi}, a polynomial in the $\bm{\xi}$ coordinates remains a polynomial of the same degree in the $\bm{\eta}$ coordinates. Hence, since $\D_{\eta_{j}}^{(\ell)}$ is exact for polynomials of degree $p$, the first term in \cref{eq:acc1} gives
	\begin{equation}
		\left[\frac{1}{2}\J^{-1}\sum_{j=1}^{d}\Lambda_{\eta_{j},\xi_{i}}\D_{\eta_{j}}\bm{p}^{(\ell)}\right]_{m}=\frac{1}{2}\pder[\fn P]{\xi_{i}}(\bm{\xi}^{(m)}), \qquad \forall \bm{\xi}^{(m)} \in S^{(\ell)}.\label{eq:acc2}
	\end{equation}
	We also note that the metric terms, $\Lambda_{\eta_{j},\xi_{i}}$, are
	polynomials of degree $d-1$ in the $\eta_{j}$ direction. To see this, consider examples of the metric terms $\Lambda_{\eta_{1},\xi_{1}} = \left|\fn J\right|\pder[\eta_{1}]{\xi_{1}}=\pder[\xi_{2}]{\eta_{2}}$ in
	two dimensions and $\Lambda_{\eta_{1},\xi_{1}} = \left|\fn J\right|\pder[\eta_{1}]{\xi_{1}}=\pder[\xi_{2}]{\eta_{2}}\pder[\xi_{3}]{\eta_{3}}-\pder[\xi_{2}]{\eta_{3}}\pder[\xi_{3}]{\eta_{2}}$
	in three dimensions, which, after using the definitions of the mapping functions in \cref{eq:bilinear Psi} and \cref{eq:trilinear Psi}, produce linear and quadratic terms in the $\eta_{1}$ direction, respectively.
	Hence, the product $\Lambda_{\eta_{j},\xi_{i}}\bm{p}^{(\ell)}$ in
	the second term in (\ref{eq:acc1}) produces a polynomial of degree
	$p$, whose derivative can be  computed exactly using $\D_{\eta_{j}}$.
	Therefore, we can apply the product rule to the second term in (\ref{eq:acc1})
	to find 
	\begin{equation}
		\frac{1}{2}\J^{-1}\sum_{j=1}^{d}\D_{\eta_{j}}\Lambda_{\eta_{j},\xi_{i}}\bm{p}^{(\ell)}=\frac{1}{2}\J^{-1}\sum_{j=1}^{d}\Lambda_{\eta_{j},\xi_{i}}\D_{\eta_{j}}\bm{p}^{(\ell)}+\frac{1}{2}\J^{-1}{\rm diag}(\bm{p}^{(\ell)})\sum_{j=1}^{d}\D_{\eta_{j}}\Lambda_{\eta_{j},\xi_{i}}\bm{1}.\label{eq:acc3}
	\end{equation}
	The first term on the RHS of \cref{eq:acc3} is identical to the LHS of (\ref{eq:acc2}) and the last term is identically zero, as $\D_{\eta_{j}}$ is exact for polynomials of degree $p\ge d-1$ by assumption, and because
	\begin{equation}
		\left[\sum_{j=1}^{d}\D_{\eta_{j}}\Lambda_{\eta_{j},\xi_{i}}\bm{1}\right]_{m}=\sum_{j=1}^{d}\pder{\eta_{j}}\left(\left|\fn J\right|\pder[\eta_{j}]{\xi_{i}}\right)\Bigg{|}_{\bm{\eta}^{(m)}}=0,\qquad m\in\{1,\dots,n_1^{d}\},
	\end{equation}
	is the metric identity.\ignore{, which we have verified to hold for both the bilinear and trilinear mappings through direct computations.} Therefore, adding the results of (\ref{eq:acc2}) and (\ref{eq:acc3}) gives 
	\begin{equation}
		\left[\D_{\xi_{i}}^{(\ell)}\bm{p}^{(\ell)}\right]_{m}=\pder[\fn P]{\xi_{i}}(\bm{\xi}^{(m)}),\qquad \forall \bm{\xi}^{(m)} \in S^{(\ell)}.
	\end{equation}
	Finally, we invoke \cref{thm:assembly} to conclude that \cref{eq:acc TPSS} holds. 
\end{proof}

\cref{thm:acc TPSS} states that TPSS operators lose one and two degrees of accuracy in two and three dimensions, respectively, compared to the tensor-product operators used to construct them. The condition in \cref{eq:acc TPSS} is in fact the same as the accuracy condition (first property) in \cref{def:sbp}, except for the lower polynomial degree accuracy, $p-d+1$, for $d\ge2$, \ie, a TPSS operator constructed using a degree $p$ tensor-product operator is degree $p-d+1$ accurate. This motivates the following definition of TPSS-SBP operators. 
\begin{definition}\label{def:TPSS}
	A TPSS derivative operator is a degree $p$ SBP operator if it is constructed as discussed in \cref{sec:TPSS construction} and using a tensor-product SBP operator of degree $p+d-1$.
\end{definition} 

\cref{def:TPSS} ensures that a degree $p$ TPSS-SBP operator is exact for polynomials up to degree $p$. Hence, we expect a solution convergence rate of $p+1$ when a degee $p$ TPSS-SBP operator is used to discretize PDEs with sufficiently smooth solutions. This expectation matches the theoretical error estimates presented for finite-element methods constructed by splitting simplicial meshes into quadrilateral and hexahedral elements and mapping the LGL tensor-product operators onto the split subdomains \cite{durufle2009influence}. Based on numerical results, Durufl{\'e} \etal \cite{durufle2009influence} conjectured that the solution convergence rate of their simplex-splitting-based LGL finite-element method is one degree higher than the theoretical estimates. However, all of our numerical results closely match the theoretical solution error estimates. Finally, the study in \cite{durufle2009influence} identified that the main cause of the loss of accuracy on the split finite elements is the suboptimal quadrature accuracy of the LGL operators, which is a consequence of the non-affine mappings. This suggests that SBP-TPSS operators constructed with a higher degree, and thus increased quadrature accuracy, of LGL tensor-product operators will have significantly smaller error coefficients compared to other types of SBP operators of the same degree.

\subsection{Sparsity}
Sparsity is crucial for efficiency of SBP operators. This is especially true for entropy-stable SBP-SAT discretizations based on two-point fluxes, as the number of nonzero entries in the derivative operator is proportional to the number of two-point flux function evaluations required to approximate the derivative of the inviscid fluxes in the Euler and Navier--Stokes equations. By studying the tensor-product nodal distribution of the TPSS operators, one can find that the total number of nodes in a TPSS operator constructed using a tensor-product operator with $n_1$ nodes in each direction is 
\begin{equation}\label{eq:TPSS num node}
	n_{p}=\left(d+1\right)n_{1}^{d}-\left(d+d^{d-2}\right)n_{1}^{d-1}+\left(d-2\right)\left(\left(d+1\right)n_{1}-2\right)+1.
\end{equation}
Furthermore, an estimate (not necessarily an upper bound) of the number of the nonzero entries in the $\D_{\xi_{i}}$ matrix is
\begin{equation}\label{eq:nnz TPSS}
	\text{nnz}=\left(n_{1}d - d + 1 \right)n_{p}.
\end{equation}
The sparsity of an $n_p\times n_p$ matrix operator is defined as 
\begin{equation}\label{eq:sparsity}
	s=1-\frac{\text{nnz}}{n_{p}^{2}},
\end{equation}
which after simplifications gives the sparsity of the two and three dimensional TPSS derivative operators as 
\begin{equation}\label{eq:2d 3d TPSS sparsity}
	s_{\rm 2D}=1-\frac{2n_{1}-1}{3n_{1}^{2}-3n_{1}+1}, \qquad s_{\rm 3D}=1-\frac{3n_{1}-2}{4n_{1}^{3}-6n_{1}^{2}+4n_{1}-1}.
\end{equation}
The sparsity estimates in \cref{eq:2d 3d TPSS sparsity} indicate that TPSS operators are very sparse, \eg, in three dimensions, the $p=1$ and $p=10$ LGL-TPSS operators are more than 94\% and 99.5\% sparse, respectively. 

As sparsity is directly related to computational cost, it can be used as a metric to compare the relative efficiency of different types of SBP operators. Of course efficiency depends on many other factors, such as accuracy and CFL restriction, so sparsity is just a single metric that can provide some valuable insights into efficiency but must be complemented by several other properties to assess overall performance. In this work, we will use the label ``dense'' to refer to SBP operators with non-tensor-product derivative matrix. This should not be confused with ``dense-norm'' SBP operators, which refer to SBP operators with dense $\H$ matrix.  \cref{tab:nnz TPSS} presents a comparison of the estimated number of nonzero entries of the LGL-TPSS operators and the derivative matrices constructed using the quadrature rules derived for dense SBP diagonal-$\E$ \cite{worku2023quadrature} (with LGL facet nodes on triangles) and dense SBP-$\Omega$ operators that would be constructed using estimates of the lower bound of the number of nodes of quadrature rules with positive weights and interior nodes (PI) \cite{lyness1975moderate,wang2023explicit}. Note that the lower bound of the number of nodes of PI quadrature rules may not be achievable; hence, SBP-$\Omega$ operators will usually have more nodes, implying that the sparsity ratios in \cref{tab:nnz TPSS} are smaller in practice. The dense SBP diagonal-$\E$ and SBP-$\Omega$ operators produce derivative operators with $n_p^2$ nonzero entries. 
\begin{table*}[!t]
	\footnotesize
	\centering
	\captionsetup{skip=3pt}
	\caption{\label{tab:nnz TPSS} Number of nonzero entries in the derivative matrices of the dense SBP diagonal-$\E$ and LGL-TPSS operators normalized by the number of nonzero entries of dense SBP-$\Omega$ operators that would be constructed using the lower bound estimates of the number of nodes of quadrature rules with positive weights and interior nodes.}
	\begin{threeparttable}
		\setlength{\tabcolsep}{0.225em}
		\renewcommand*{\arraystretch}{1.2}
		\begin{tabular}{cl @{\hspace{1.0em}}lllll lllll lllll lllll}
			\toprule
			$d$ &$p=\frac{q}{2}$ & 1 & 2 & 3 & 4 & 5 & 6 & 7 & 8 & 9 & 10 & 11 & 12 & 13 & 14 & 15 & 16 & 17 & 18 & 19 & 20 \\
			\midrule 
			\multirow[t]{2}{*}{2} & SBP-$\E$ & 5.44 & 4.00 & 2.25 & 2.85 & 2.25 & 2.12 & 1.84 & 1.92 & 1.99 & 1.74 &  &  &  & &  &  &  &  &  & \\
			& TPSS & 10.56 & 7.19 & 3.81 & 3.91 & 2.87 & 2.33 & 2.09 & 1.90 & 1.60 & 1.50 & 1.36 & 1.20 & 1.13 & 1.03 & 0.96 & 0.90 & 0.85 & 0.79 & 0.76 & 0.72\\
			\addlinespace
			\multirow[t]{2}{*}{3} & SBP-$\E$ & 3.06 & 4.37 & 4.52 & 4.28 & 4.55 &  & &  &  &  &  &  &  &  &  & &  &  &  & \\
			& TPSS & 109.38 & 39.64 & 18.64 & 11.35 & 8.06 & 4.50 & 3.62 & 2.65 & 2.05 & 1.68 & 1.36 & 1.10 & 0.94 & 0.79 & 0.68& 0.59& 0.51 & 0.45 & 0.40 & 0.36\\
			\bottomrule
		\end{tabular}
	\end{threeparttable}
\end{table*}
The table shows that despite producing very large matrices, at high degrees, the SBP-TPSS operators are sparser than both the dense SBP-$\Omega$ and dense SBP diagonal-$\E$ operators. More specifically, the LGL-TPSS operators are sparser than dense SBP-$\Omega$ operators for degrees greater than $14$ and $12$ on triangles and tetrahedra, respectively. They are also sparser than dense SBP diagonal-$\E$ operators for degrees greater than $7$ on triangles, and should be sparser on tetrahedra as well if higher degree dense SBP diagonal-$\E$ operators are constructed, since these operators have more nodes than dense SBP-$\Omega$ operators. It is interesting that, despite having more than $15$ times more degrees of freedom,  the number of nonzero entries in the degree $p=20$ TPSS operator on the tetrahedron is only about $36\%$ that of the same degree dense SBP-$\Omega$ operator (assuming lower bound number of nodes). This suggests that the TPSS operators can be substantially more efficient than the existing multidimensional SBP operators at high polynomial degrees, as the lower number of nonzero entries in the TPSS matrices results in fewer floating point operations. As will be shown later, the TPSS operators are also more efficient than dense SBP diagonal-$\E$ operators at low polynomial degrees due to their high accuracy per degree of freedom.

\section{Numerical results}\label{sec:numerical results}
In this section, we investigate the properties of the TPSS-SBP operators through numerical experiments. To limit the myriad combination of potential comparisons, we focus exclusively on the relative performance of the TPSS operators and the dense SBP diagonal-$\E$ operators derived in \cite{worku2023quadrature} with quadrature degrees of $q=2p$ and with the LGL facet nodes on triangles. This is motivated by the fact that both operators belong to the SBP diagonal-$\E$ family, which has a relatively low SAT coupling cost for entropy-stable discretizations due to the collocated facet and volume nodes, yet incurs a relatively high volume-flux computation cost due to its large number of nodes. In contrast, dense SBP-$\Omega$ operators have fewer nodes, resulting in lower volume flux computation costs, but do not have collocated volume and facet nodes, leading to higher SAT coupling costs. Although detailed comparisons of existing multidimensional operators are lacking in the literature, it is reasonable to assume that the cost difference between the dense SBP diagonal-$\E$ and SBP-$\Omega$ operators is not very large. Furthermore, the two families of operators generally exhibit comparable accuracy. Therefore, comparing the TPSS operators with dense SBP diagonal-$\E$ operators can represent their performance relative to many of the existing multidimensional SBP operators. In tables and figures, dense SBP diagonal-$\E$ operators are denoted by SBP-$\E$. 

The TPSS operators are implemented using the sparse matrix functionality provided by the Armadillo library \cite{sanderson2016armadillo} in C++; however, there are possibly more efficient algorithms that take into consideration the sparsity patterns of the operators, such as sum factorization, to avoid the overhead cost of using sparse matrices. This topic is left for future studies. Both operators are executed within the same code base, with only the type of operator being altered, making runtime comparisons reasonable. 

The numerical comparisons are conducted using the advection, Euler, and Navier--Stokes equations. Periodic domains are considered in all cases. The upwind SAT is used for the advection equation.  The problems governed by the Euler and Navier--Stokes equations use the Ismail-Roe two-point fluxes without interface dissipation. The viscous terms in the Navier--Stokes equations are coupled using the Baumann--Oden SAT \cite{baumann1999discontinuous,worku2021simultaneous}.  Finally, the isentropic vortex problem uses the relaxation RK4 time-marching method, while all the other test cases are implemented using the standard RK4 method.

\subsection{Advection problem}
We consider the linear advection equation, 
\begin{equation}\label{eq:adv eq}
	\pder[\fn{U}]{t} + \sum_{i=1}^{d}{c}_{i}\pder[\fn{U}]{ {{x}}_{i}} = 0,
\end{equation} 
on the periodic domain $ \Omega=[0,1]^{d} $. The initial condition, $\fn{U}(\bm{x},t=0)$, and the exact solution, $\fn{U}(\bm{x},t)$, for the problem are given by
\begin{equation}\label{eq:exact adv sol}
	\fn{U}({\bm{x}},t) =\prod_{i=1}^{d} \sin(\omega\pi ({{x}}_{i}-{c} _{i}t)),
\end{equation}
where $ \bm{c}=[5/4,\sqrt{7}/4]^T $ in 2D or $ \bm{c}=[3/2,1/2,1/\sqrt{2}]^T $ in 3D is used in all cases, as in \cite{worku2023quadrature}. The values of $\bm{c}$ are chosen to set the wave speed magnitude at $\sqrt{d}$. The direction of the wave propagation depends on $ \bm{c} $ and affects numerical errors, \eg, see \cite{zingg1993finite}, and mesh convergence rates in some cases. The values of $\bm{c}$ are chosen to prevent potential alignment of the wave with the meshes used. The problem is run up to $ t=1 $ with the $ \omega $ parameters in \cref{eq:exact adv sol} set to $ 8 $ and $ 2 $ for the 2D and 3D cases, respectively.

We first investigate the maximum time-step restrictions for the TPSS operators, which when combined with the accuracy and sparsity discussion in the previous section helps derive more meaningful conclusions. \cref{tab:adv cfl} presents the maximum time-step restrictions obtained with the TPSS operators in comparison to those of the dense SBP diagonal-$\E$ operators reported in \cite[Table 5.2]{worku2023quadrature}. The test is conducted on uniform triangular and tetrahedral meshes which are generated by dividing each element of a $4\times4$ quadrilateral mesh and a $4\times4\times4$ hexahedral mesh into $2$ triangles and $6$ tetrahedra, respectively. The golden section optimization is employed to determine the maximum time step using the criterion that the change in energy remains less than or equal to zero after 5 periods, \ie, $\sum_{\Omega_{k}\in\fn{T}_h} (\bm{u}_k^T\H_k\bm{u}_k - \bm{u}_{0,k}^T\H_k\bm{u}_{0,k})\le 0$, where $\bm{u}_{0,k}$ is the vector of the initial solution on the nodes of $\Omega_{k}$. In two dimensions, the TPSS operators allow maximum time steps within the range of $70\%$ to $90\%$ of those of the dense SBP diagonal-$\E$ operators. In three dimensions, the degree $1$, $3$, and $5$ TPSS operators allow time-step values about $40\%$, $72\%$, and $63\%$ of the maximum time-step values of the dense SBP diagonal-$\E$ operators, respectively, while the even degree TPSS operators allow slightly larger time-step values. In general, it can be concluded that the TPSS operators require smaller stable time-step values for explicit time-marching methods, and this must be taken into account when examining their relative efficiency for practical problems. Nevertheless, the reduction in stable maximum time step is not as significant as the accuracy and runtime gains afforded by the TPSS operators, as demonstrated below.

\begin{table*}[t]
	\footnotesize
	\centering
	\captionsetup{skip=3pt}
	\caption{\label{tab:adv cfl} Maximum time-step restrictions and percent ratios ($r$) for the advection problem discretized with the TPSS operators and the dense SBP diagonal-$\E$ operators with quadrature degrees of $2p$ and LGL facet nodes on triangles.}
	\begin{threeparttable}
		\setlength{\tabcolsep}{0.35em}
		\renewcommand*{\arraystretch}{1.2}
		\begin{tabular}{cl @{\hspace{1.0em}}lllll lllll}
			\toprule
			$d$ &$p$ & 1& 2 & 3 & 4 & 5 & 6 & 7 & 8 & 9 & 10  \\
			\midrule 
			\multirow[t]{2}{*}{2} & SBP-$\E$ & 0.0446& 0.0235 & 0.0146 & 0.0106 & 0.0066 & 0.0059 & 0.0048 & 0.0038 &0.0033 &0.0025   \\
			& TPSS & 0.0312& 0.0173& 0.0112& 0.0079& 0.0060 & 0.0046 & 0.0037 & 0.0030 & 0.0024 & 0.0020 \\
			& $r(\%)$ & 70.0 & 73.6 & 76.7 & 74.5 & 90.9 & 78.0 & 77.1 & 79.0 & 72.7 & 80.0 \\
			\addlinespace
			\multirow[t]{2}{*}{3} & SBP-$\E$ & 0.0388 & 0.0065& 0.0097 & 0.0049 & 0.0059 &  & &  &  &   \\
			& TPSS & 0.0154& 0.0099 & 0.0070 & 0.0050 &  0.0037&  &  & & &  \\
			& $r(\%)$  & 39.7 & 152.3 & 72.2 & 102.0 & 62.7&  &  & & &   \\		
			\bottomrule
		\end{tabular}
	\end{threeparttable}
\end{table*}

As the same degree TPSS and dense SBP diagonal-$\E$ operators have substantially different numbers of nodes, the accuracy of their solutions is compared using the number of degrees of freedom rather than the number of elements in a mesh. Computational cost in a computer implementation mainly depends on the total number of degrees of freedom rather than the number of elements; hence, we are interested in the accuracy attained by the two operators for a given number of degrees of freedom. It should be noted that accuracy comparisons based on number of elements would highly favour the TPSS operators.  The convergence rates for the 2D and 3D cases are tabulated in \cref{sec:grid conv adv}, \cref{tab:grid conv adv}, which shows that both operators attain convergence rates between $p$ and $p+1$ in the $\H$-norm. All the accuracy results are obtained using sufficiently small CFL values to ensure that the temporal error is negligible. A comparison of the accuracy per degree of freedom of the TPSS and dense SBP diagonal-$\E$ operators for the advection equation is shown in \cref{fig:adv acc}. The figure shows that, for the same number of degrees of freedom, the TPSS operators are substantially more accurate than the SBP diagonal-$\E$ operators, with about 1 and 2 orders of magnitude improvements for the $p=5$ operators in the two- and three-dimensional cases, respectively. 
\begin{figure}[t]
	\centering
	\captionsetup{belowskip=0pt, aboveskip=3pt}
	\begin{subfigure}{0.5\textwidth}
		\centering
		\includegraphics[scale=0.5]{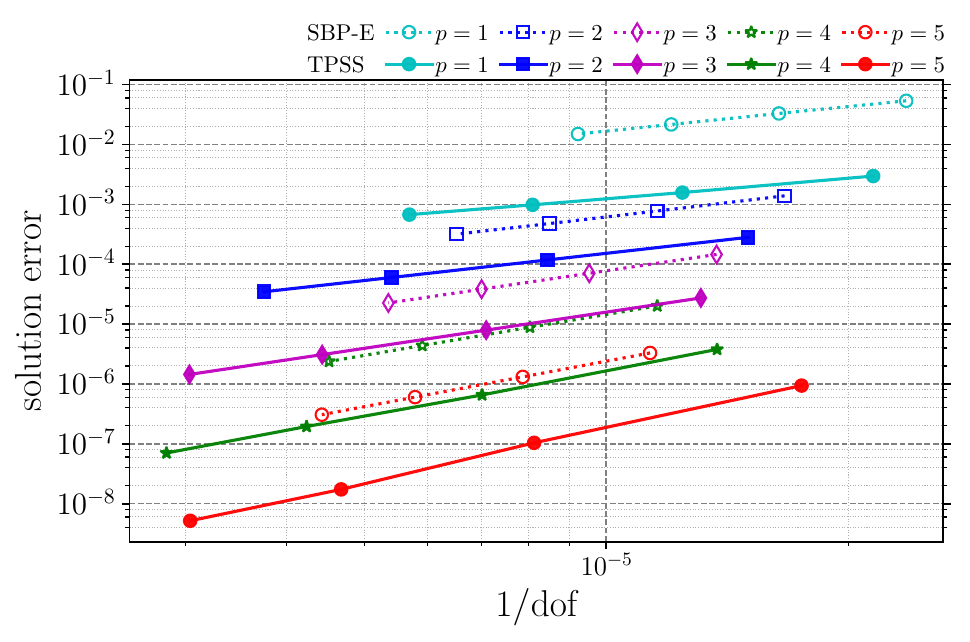}
		%\caption{\label{fig:adv 2d err} 2D}
	\end{subfigure}\hfill
	\begin{subfigure}{0.5\textwidth}
		\centering
		\includegraphics[scale=0.5]{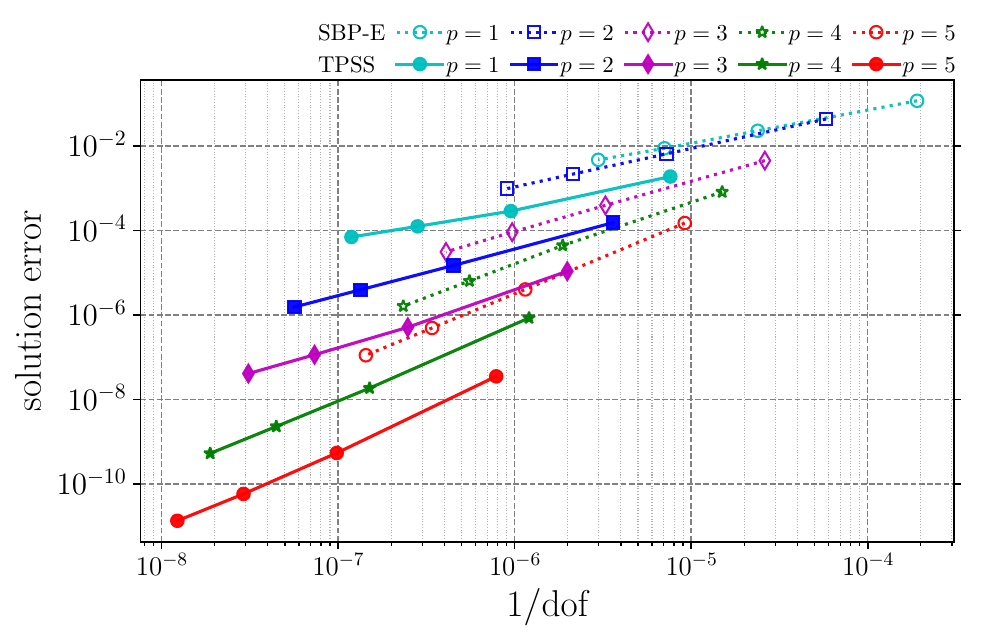}
		%\caption{\label{fig:adv 3d err} 3D}
	\end{subfigure} 
	\caption{\label{fig:adv acc} Grid convergence study of the $\H$-norm solution error for the 2D (left) and 3D (right) advection problems.}
\end{figure}

Considering the difference in the maximum stable time-step limits for the TPSS and dense SBP diagonal-$\E$ operators is not substantially large, another approach to assess the efficiency gains of the TPSS operators is to examine the solution error for a given computational time or, equivalently, the computational time required to achieve a specified error threshold. To make this comparison, we run the advection problem for one time step (with a step size of $10^{-4}$) using each type of operator and plot the  $\H$-norm solution error against the time required to compute the spatial residual vector. Again, the time step is set to a very small value such that the temporal error is negligible. The reported spatial residual computation time is for implementations without parallelization and includes both the time required to perform the spatial discretization operations and the time required to construct SBP operators on the physical elements. Considering only the former would slightly favor the TPSS operators. \cref{fig:adv res time} shows that the $p=5$ TPSS operators on triangles and tetrahedra produce about 5 times and 100 times smaller solution errors compared to the dense SBP diagonal-$\E$ operators for a given computational runtime. Furthermore, for a specified error threshold, the same degree TPSS operator is about 2 times and 15 times faster on triangles and tetrahedra, respectively. \ignore{We have also plotted the runtime comparisons using the $L^{\infty}$-norm in \cref{fig:adv res time linf}, which exhibit similar trends to those observed in the $\H$-norm.}

\begin{figure}[t]
	\centering
	\captionsetup{belowskip=0pt, aboveskip=4pt}
	\begin{subfigure}{0.5\textwidth}
		\centering
		\includegraphics[scale=0.5]{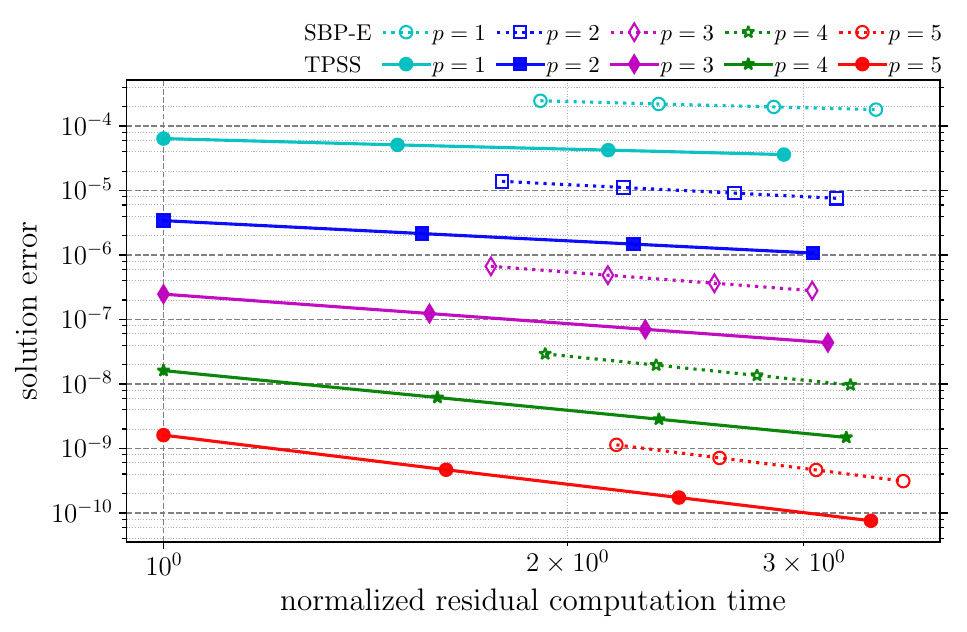}
		%\caption{\label{fig:ivp 2d res time} 2D}
	\end{subfigure}\hfill
	\begin{subfigure}{0.5\textwidth}
		\centering
		\includegraphics[scale=0.5]{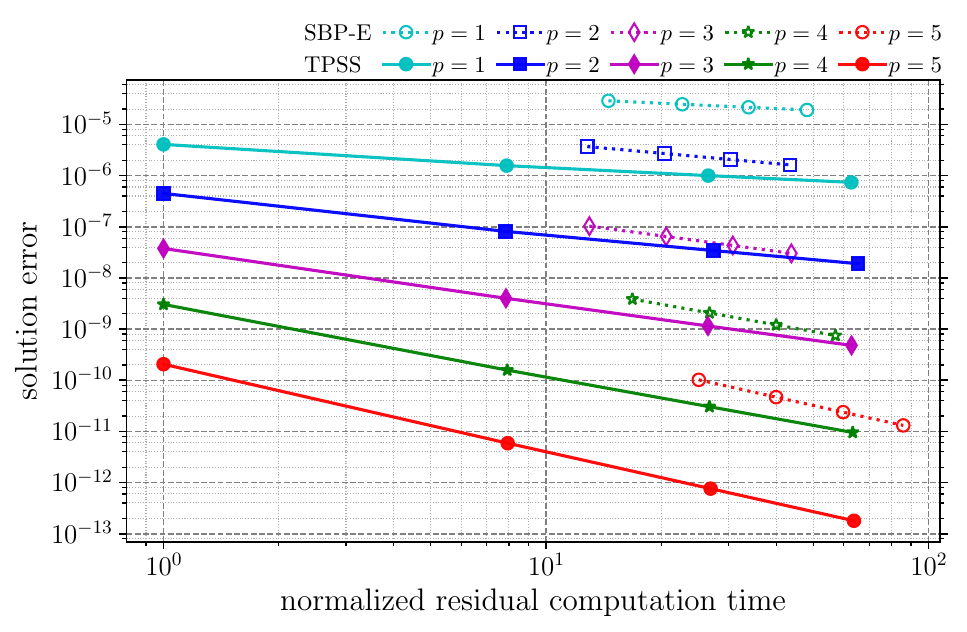}
		%\caption{\label{fig:ivp 3d res time} 3D}
	\end{subfigure} 
	\caption{\label{fig:adv res time} Normalized computational runtime versus the $\H$-norm solution error for the 2D (left) and 3D (right) advection problem.}
\end{figure}

\ignore{
The $\H$-norm is an approximation to the $L^2$-norm, and it is possible that some types of SBP operators have $\H$ matrices that approximate the $L^2$-norm better than others. This can affect the solution error values. Hence, for a sanity check, the $L^{\infty}$-norm of the solution errors with the TPSS and dense SBP diagonal-$\E$ operators is computed after one time step. The results in \cref{fig:adv res time linf} indicate that the solution errors in the $L^{\infty}$-norm exhibit similar trends to those in the $\H$-norm, suggesting that the integration error due to the use of the $\H$-norm has a negligible impact on the accuracy and efficiency differences between the two operators. 

\begin{figure}[t]
	\centering
	\captionsetup{belowskip=0pt, aboveskip=4pt}
	\begin{subfigure}{0.5\textwidth}
		\centering
		\includegraphics[scale=0.5]{2d_total_time_SBP_E_vs_tss_adv_err_vs_time_linf_tpss.pdf}
		%\caption{\label{fig:ivp 2d res time} 2D}
	\end{subfigure}\hfill
	\begin{subfigure}{0.5\textwidth}
		\centering
		\includegraphics[scale=0.5]{3d_total_time_SBP_E_vs_tss_adv_err_vs_time_linf_tpss.pdf}
		%\caption{\label{fig:ivp 3d res time} 3D}
	\end{subfigure} 
	\caption{\label{fig:adv res time linf} Computational runtime versus the $L^{\infty}$-norm solution error for the 2D (left) and 3D (right) advection problem.}
\end{figure}
}
\begin{figure}[!ht]
	\centering
	\begin{subfigure}{0.26\textwidth}
		\centering
		\includegraphics[scale=0.08]{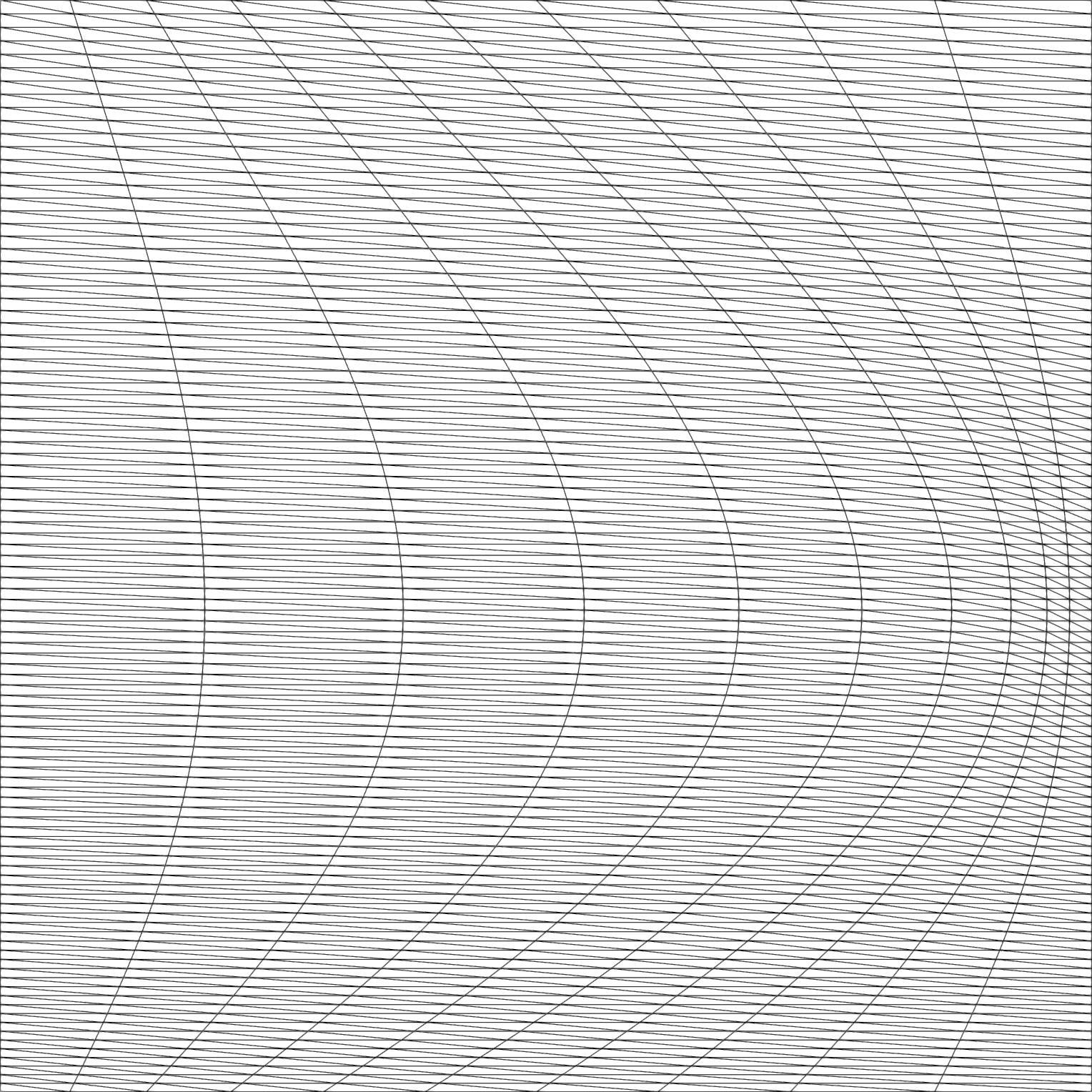}
		\caption{Slightly distorted mesh, $M_1^{2}$}
	\end{subfigure}\hfill
	\begin{subfigure}{0.35\textwidth}
		\centering
		\includegraphics[scale=0.08]{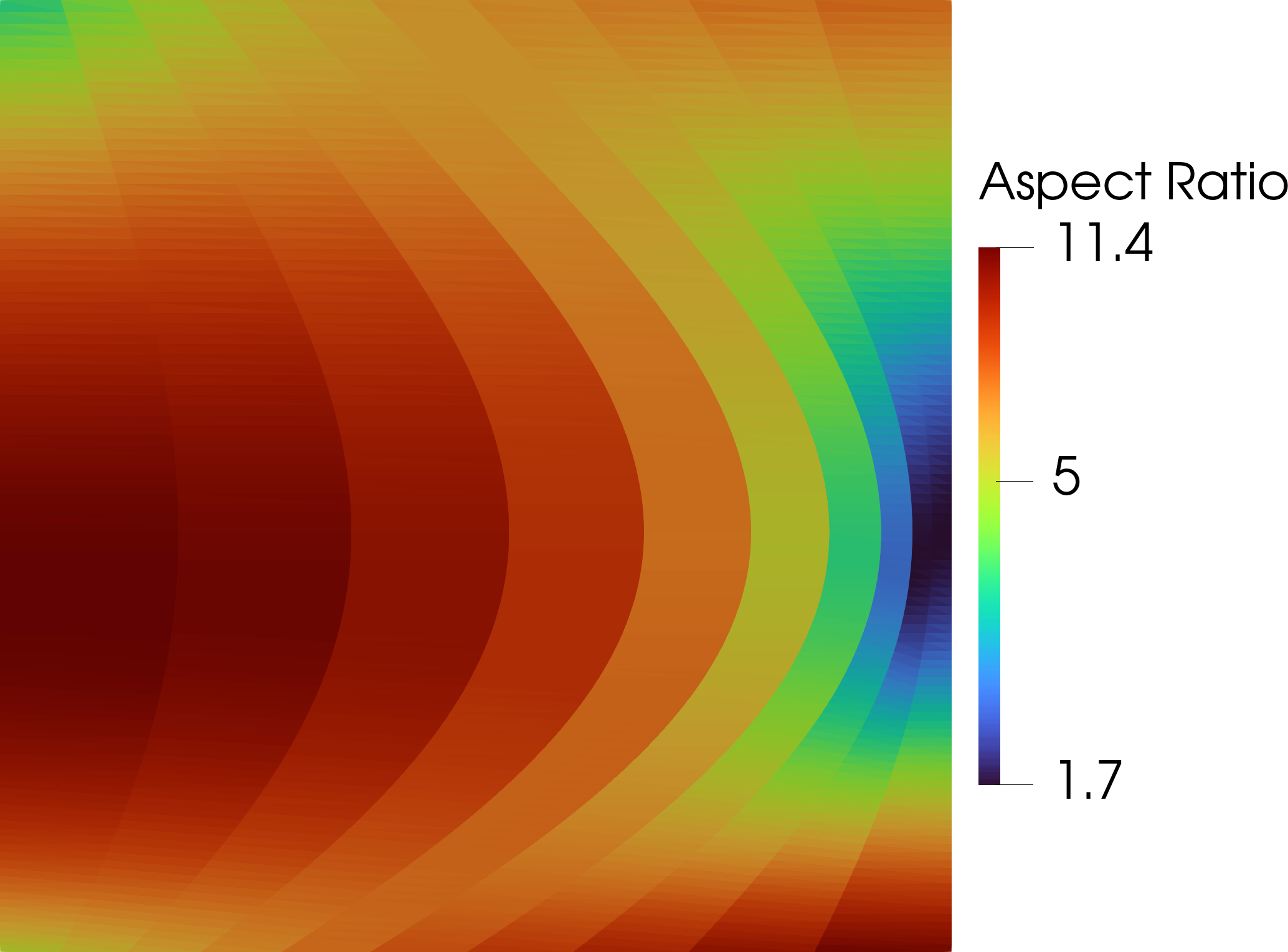}
		\caption{Aspect ratio of $M_1^{2}$}
	\end{subfigure}\hfill
	\begin{subfigure}{0.35\textwidth}
		\centering
		\includegraphics[scale=0.08]{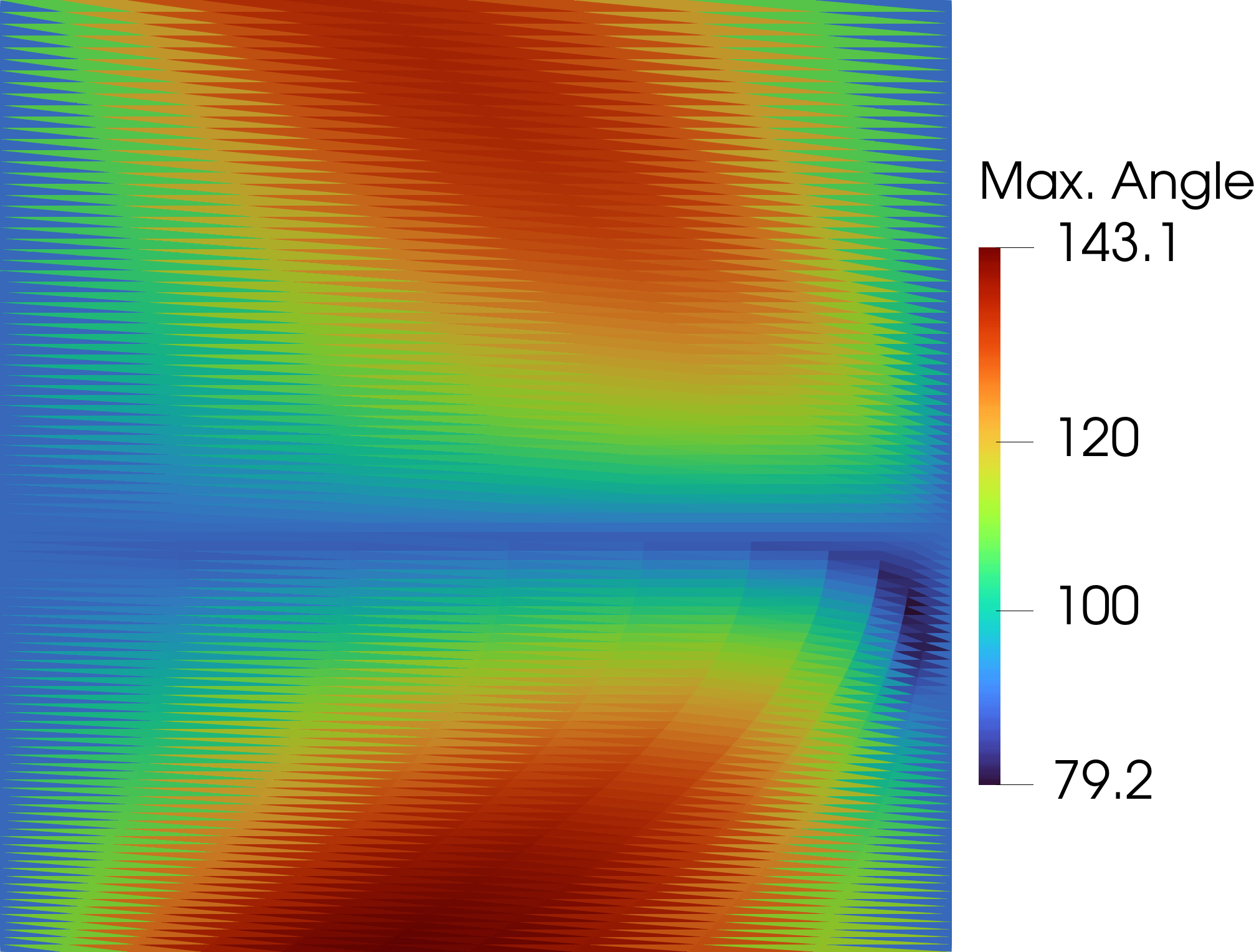}
		\caption{Maximum angle of $M_1^{2}$}
	\end{subfigure}
	\vspace{2mm}
	\\
	\begin{subfigure}{0.26\textwidth}
		\centering
		\includegraphics[scale=0.08]{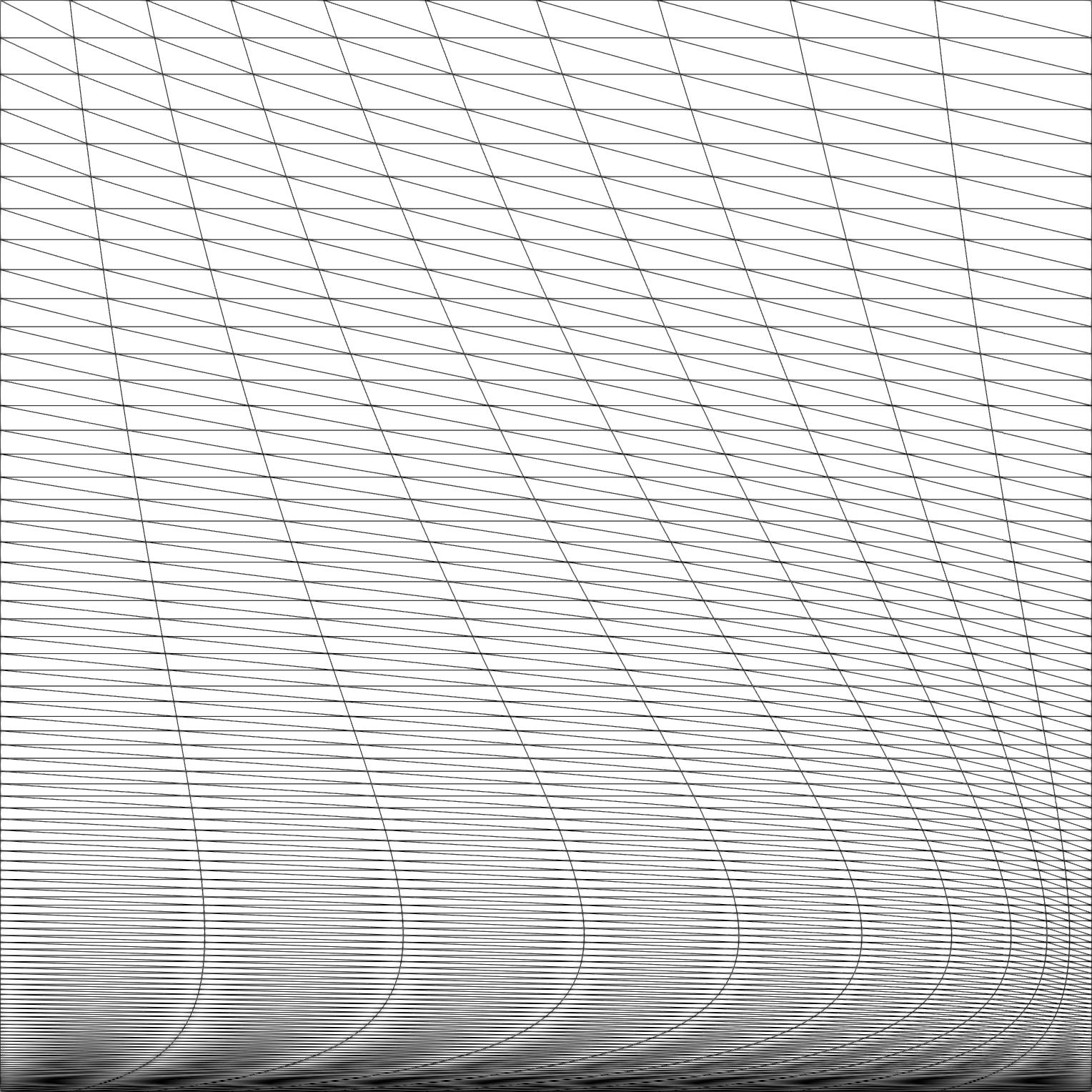}
		\caption{Moderately distorted mesh, $M_2^{2}$}
	\end{subfigure}\hfill
	\begin{subfigure}{0.35\textwidth}
		\centering
		\includegraphics[scale=0.08]{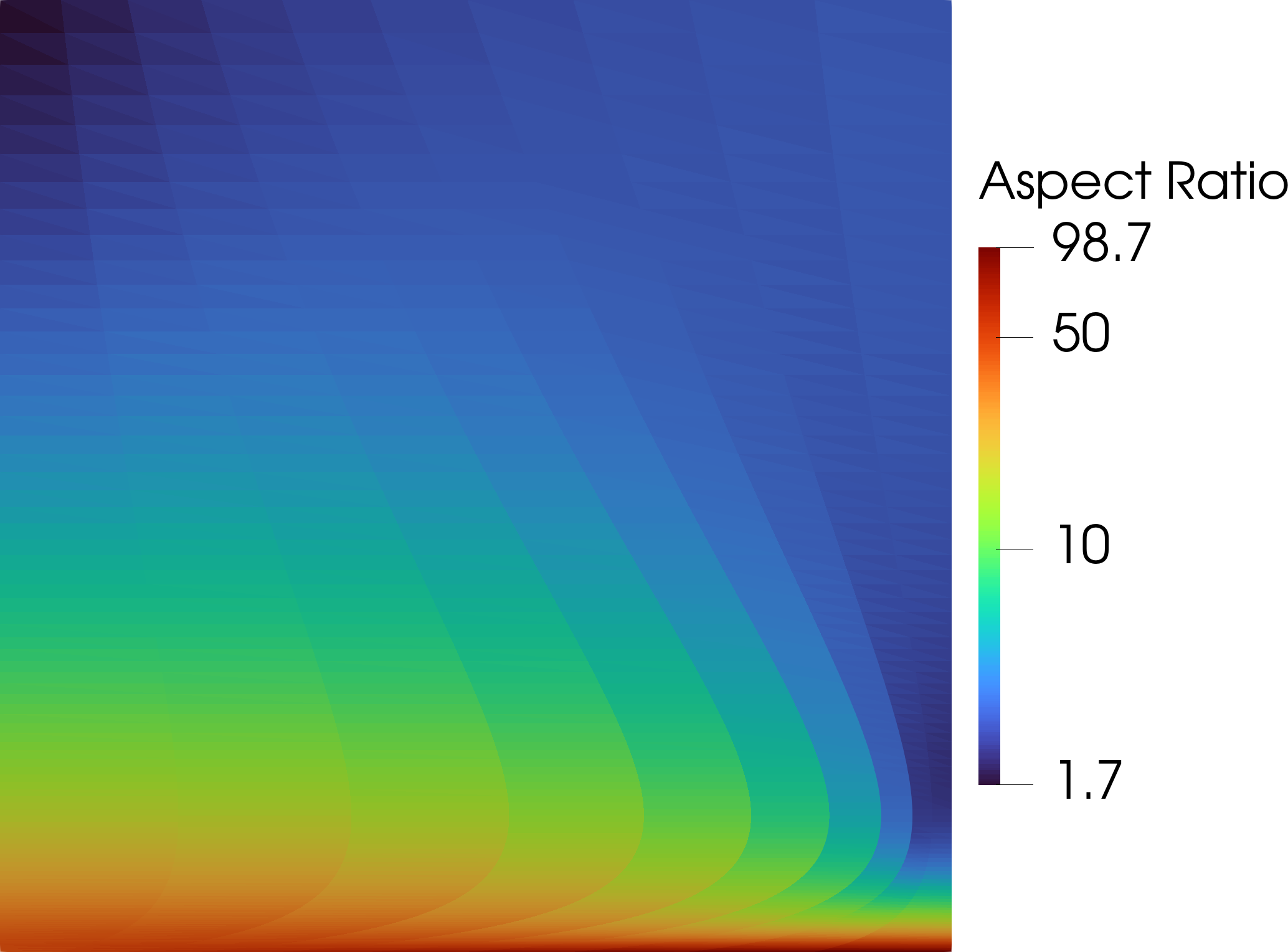}
		\caption{Aspect ratio of $M_2^{2}$}
	\end{subfigure}\hfill
	\begin{subfigure}{0.35\textwidth}
		\centering
		\includegraphics[scale=0.08]{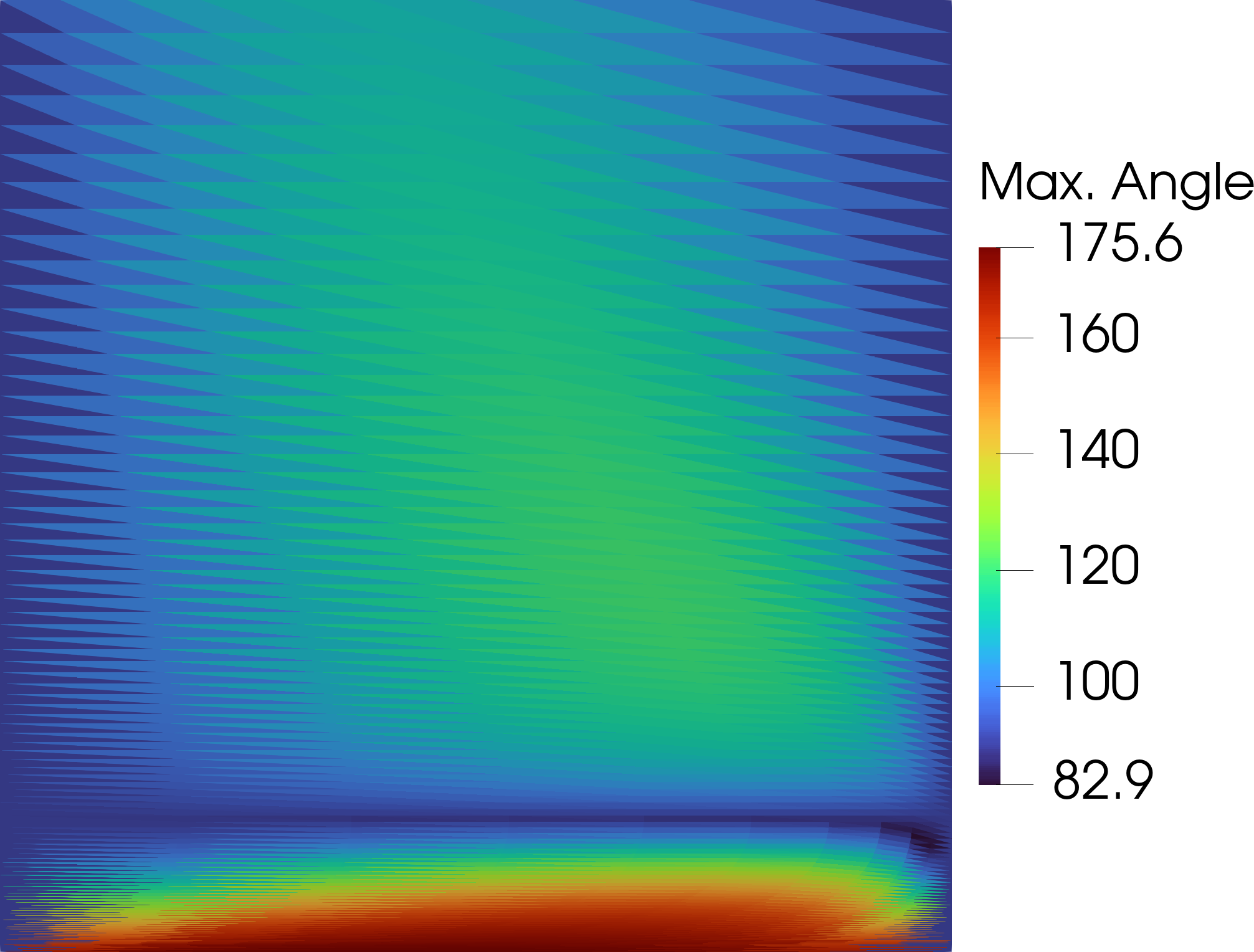}
		\caption{Maximum angle of $M_2^{2}$}
	\end{subfigure}
	\vspace{2mm}
	\\
	\begin{subfigure}{0.26\textwidth}
		\centering
		\includegraphics[scale=0.08]{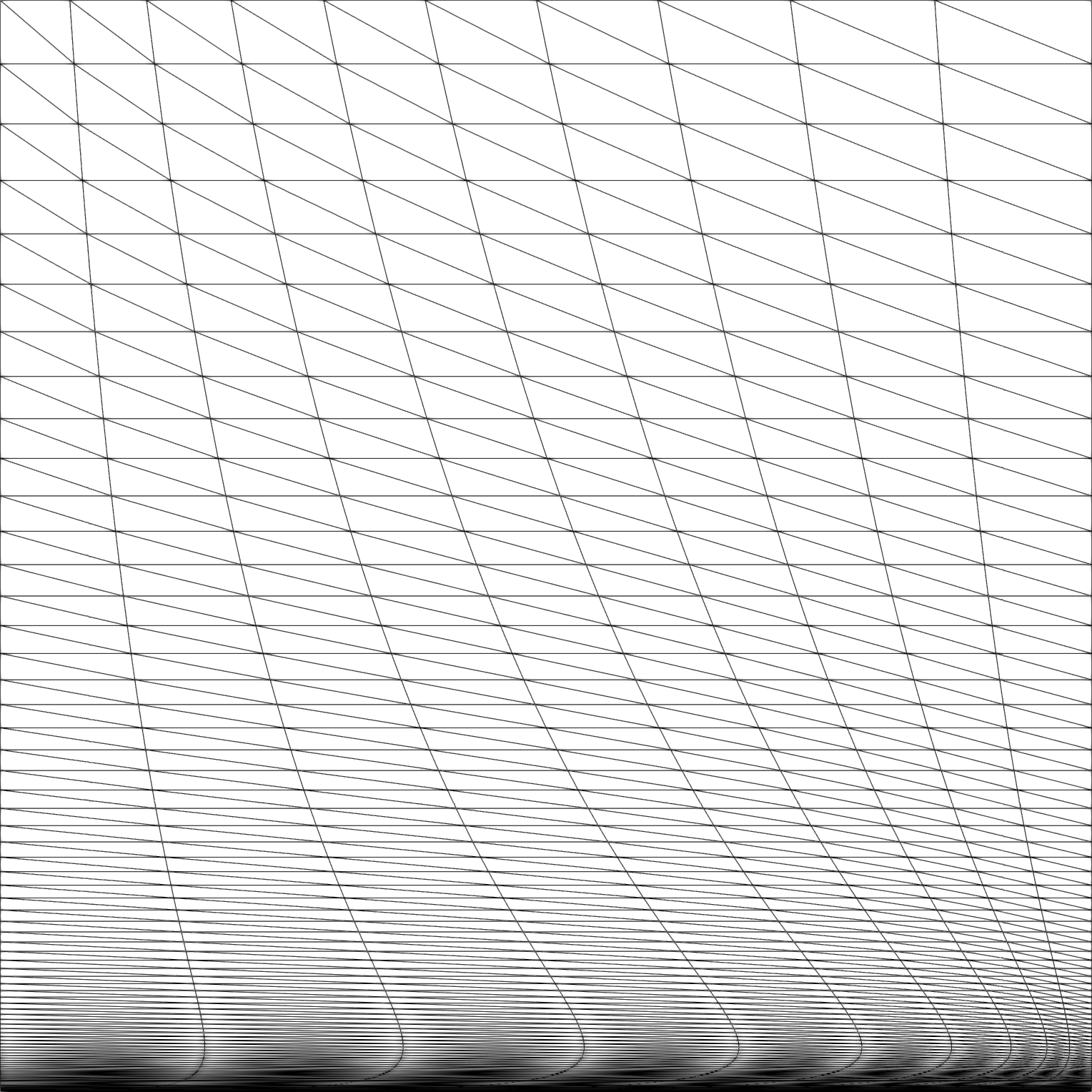}
		\caption{Highly distorted mesh, $M_3^{2}$}
	\end{subfigure}\hfill
	\begin{subfigure}{0.35\textwidth}
		\centering
		\includegraphics[scale=0.08]{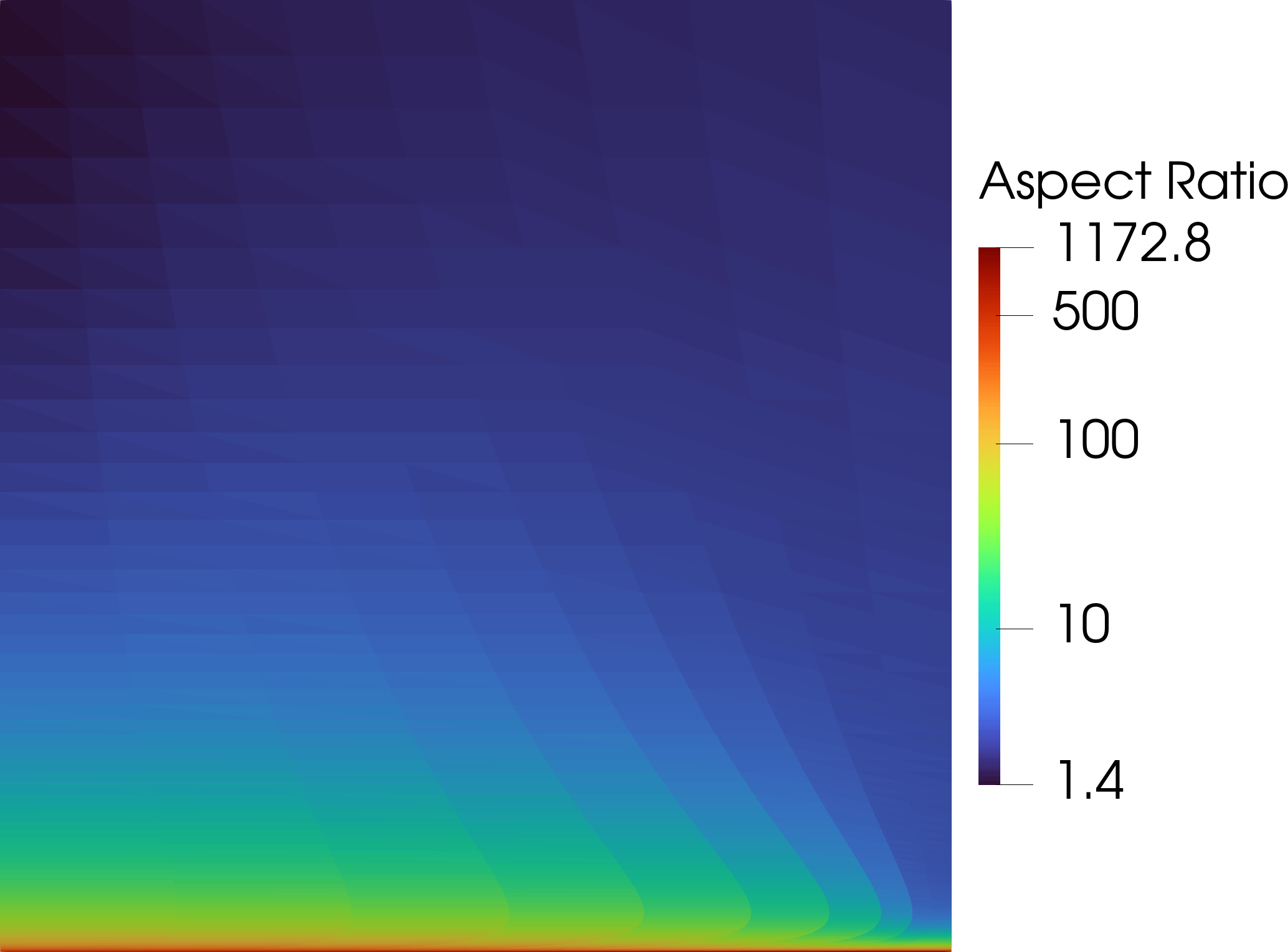}
		\caption{Aspect ratio of $M_3^{2}$}
	\end{subfigure}\hfill
	\begin{subfigure}{0.35\textwidth}
		\centering
		\includegraphics[scale=0.08]{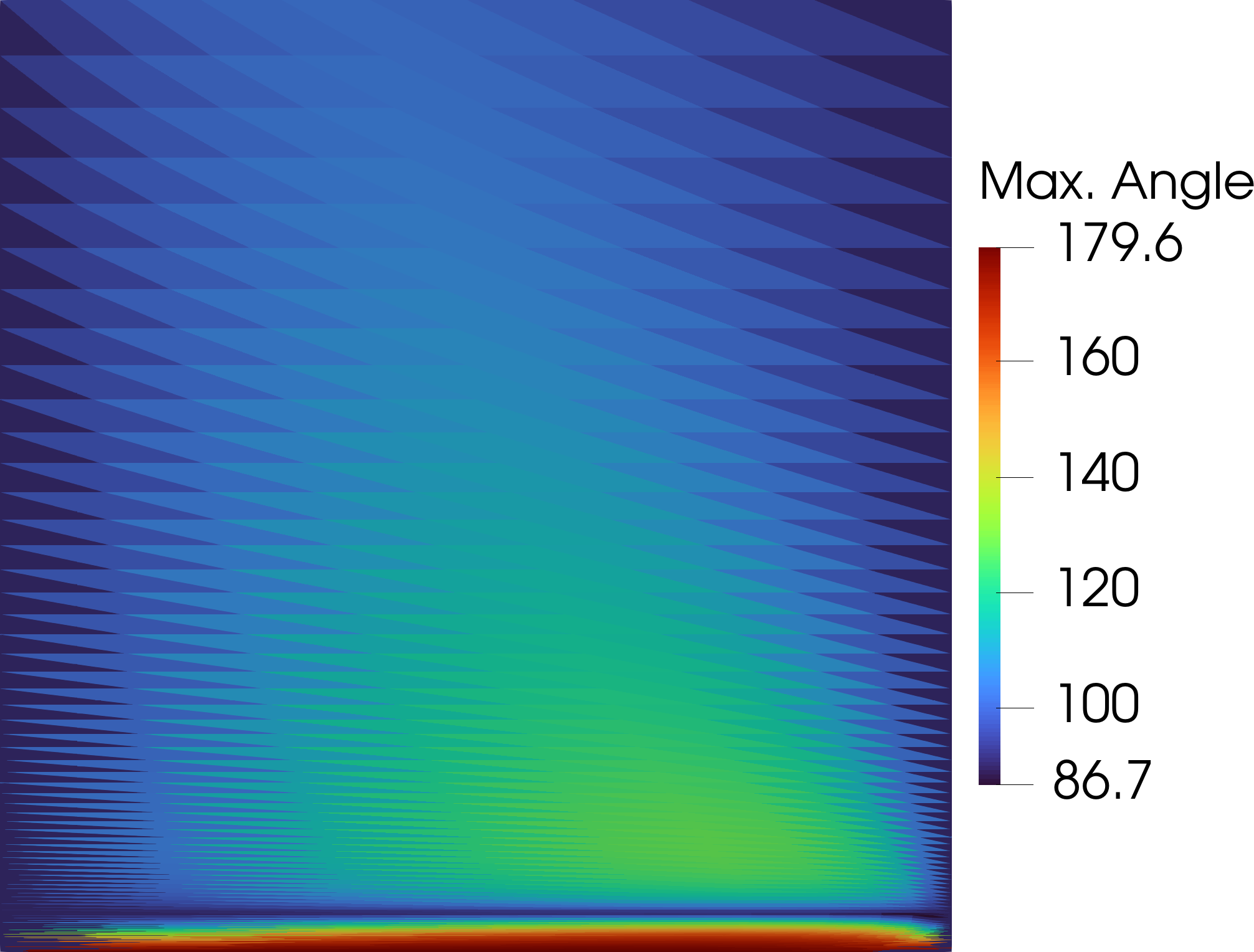}
		\caption{Maximum angle of $M_3^{2}$}
	\end{subfigure}
	%\vspace{2mm}
	\caption{\label{fig:distorted mesh} Distorted meshes with 2000 triangular elements to study the accuracy and maximum stable time step of the TPSS operators for the advection problem.}
\end{figure} 

In order to test the performance of the TPSS operators on distorted meshes, we apply the perturbation function 
\begin{equation}\label{eq:adv mapping}
	\begin{aligned}
			{x}_{1}	&=\hat{x}_{1}\exp(0.5(\hat{x}_{1}-1))+0.4\sin(\pi \hat{x}_{1})\sin(\pi \hat{x}_{2}),\\
		{x}_{2}	&=\hat{x}_{2}\exp\left(\alpha_{m}\left(\hat{x}_{2}-1\right)\right),
	\end{aligned}
\end{equation}
to a triangular mesh denoted by $M_{0}^{2}$, where the superscript denotes the spatial dimension, and obtained by subdividing a quadrilateral mesh constructed using $10$ and $100$ edges in the $x_1$ and $x_2$ directions, respectively. The hat symbol, $ (\hat{\cdot}) $, denotes the coordinate of a node before perturbation. We consider three values of $\alpha_{m}$ to produce three sets of meshes with slight, $\alpha_{m}=0.25$, moderate, $\alpha_{m}=2.5$, and high, $\alpha_{m}=5$, perturbations, which are denoted by $M_{1}^{2}$, $M_{2}^{2}$, and $M_{3}^{2}$, respectively, see \cref{fig:distorted mesh}. The mapping in \cref{eq:adv mapping} is applied to the nodes of degree-one multidimensional Lagrange finite elements constructed on the uniform triangular mesh, and the SBP nodes are obtained using a linear polynomial interpolation with the perturbed Lagrange finite-element nodes. The distorted meshes have different values of maximum aspect ratio, which is a critical parameter that affects the accuracy of operators on simplicial meshes, \eg, see \cite{chang2017cause,sun2012impact}.

The advection problem with $\omega=2$ is run on the uniform and the three distorted meshes using a CFL value of $0.5$ and up to $t=1$. \cref{tab:adv mesh} shows the $\H$-norm and $L^{\infty}$-norm errors produced by the dense SBP diagonal-$\E$ and TPSS operators. The table indicates that the ratios of the solution errors produced by the degree $p=1$ through $p=5$ dense SBP diagonal-$\E$ and TPSS operators remain consistently close on all three meshes. This suggests that the relative solution errors from the TPSS operators do not deteriorate on distorted triangular meshes. Hence, the TPSS operators retain their efficiency advantages relative to the dense SBP diagonal-$\E$ operators on triangular meshes with high aspect ratios and angles. However, further studies are required to ensure that the TPSS operators do not suffer on practically relevant meshes as reported in \cite{blacker2000meeting} for the method of splitting simplicial meshes into quadrilateral and hexahedral elements and applying tensor-product operators. 

\begin{table*}[t]
	\footnotesize
	\centering
	\captionsetup{skip=3pt}
	\caption{\label{tab:adv mesh} Comparison of the change in accuracy with degree of mesh distortion for the advection problem on triangular meshes using dense SBP diagonal-$\E$ and the TPSS operators.}
	\begin{threeparttable}
		\setlength{\tabcolsep}{0.25em}
		\renewcommand*{\arraystretch}{1.2}
		\begin{tabular}{cl @{\hspace{1.0em}}lllll lllll}
			\toprule
				& &  \multicolumn{2}{l}{$p=1$ }&  \multicolumn{2}{l}{$p=2$} &  \multicolumn{2}{l}{$p=3$} &  \multicolumn{2}{l}{$p=4$} &  \multicolumn{2}{l}{$p=5$}   \\
			\cmidrule(l{0em}r{0.4em}){3-4} \cmidrule(l{0em}r{0.4em}){5-6} \cmidrule(l{0em}r{0.4em}){7-8} \cmidrule(l{0em}r{0.4em}){9-10} \cmidrule(l{0em}r{0.4em}){11-12}  
			Mesh & & $\H$-norm& $L^\infty$-norm & $\H$-norm& $L^\infty$-norm& $\H$-norm& $L^\infty$-norm& $\H$-norm& $L^\infty$-norm& $\H$-norm& $L^\infty$-norm\\
			\midrule 
			\multirow[t]{3}{*}{$M_{0}^{2}$} & SBP-$\E$ & 1.171e-02& 3.803e-02&4.557e-04& 3.020e-03& 2.141e-05& 1.363e-04& 1.416e-06& 7.281e-06& 1.046e-07& 4.149e-07\\
			& TPSS &7.810e-04& 6.616e-03&1.941e-05& 1.262e-04&4.243e-07 & 4.242e-06& 2.074e-08& 1.447e-07&1.220e-10 & 1.359e-09\\
			& Ratio&15.0 & 5.8 & 23.5 & 23.9 & 50.5 & 32.1 & 68.3 & 50.3 & 856.8 & 305.4  \\
			\addlinespace
			\multirow[t]{3}{*}{$M_{1}^{2}$} & SBP-$\E$ & 2.309e-02& 1.433e-01&1.403e-03&1.853e-02 & 9.377e-05& 1.483e-03& 1.048e-05& 1.304e-04& 8.839e-07& 7.939e-06\\
			& TPSS &1.713e-03 & 1.687e-02& 5.616e-05& 7.792e-04& 1.957e-06& 3.500e-05& 6.458e-08& 9.840e-07& 1.512e-09& 3.521e-08 \\
			& Ratio &13.5 & 8.5 & 25.0 & 23.8 & 47.9 & 42.4 & 162.2 & 132.5 & 584.8 & 225.5 \\
			\addlinespace
			\multirow[t]{3}{*}{$M_{2}^{2}$} & SBP-$\E$ & 2.185e-02& 1.543e-01&1.148e-03 & 1.190e-02& 7.292e-05& 8.028e-04& 7.121e-06& 8.174e-05& 6.565e-07& 5.859e-06\\
			& TPSS & 1.419e-03& 1.395e-02&5.637e-05& 5.522e-04& 1.741e-06& 2.103e-05& 7.011e-08& 1.618e-06& 9.973e-10&2.003e-08 \\
			& Ratio &15.4 & 11.1 & 20.4 & 21.5 & 41.9 & 38.2 & 101.6 & 50.5 & 658.2 & 292.5 \\
			\addlinespace
			\multirow[t]{3}{*}{$M_{3}^{2}$} & SBP-$\E$ &2.222e-02 &1.978e-01 & 1.174e-03& 1.292e-02& 8.440e-05& 1.206e-03& 6.394e-06&7.923e-05 &6.382e-07&6.872e-06 \\
			& TPSS & 1.444e-03& 1.366e-02&7.497e-05& 6.199e-04& 1.986e-06& 2.982e-05& 6.156e-08& 9.531e-07& 1.349e-09& 2.350e-08\\
			& Ratio& 15.4 & 14.5 & 15.7 & 20.8 & 42.5 & 40.5 & 103.9 & 83.1 & 473.2 & 292.5\\
			\bottomrule
		\end{tabular}
	\end{threeparttable}
\end{table*}

We also investigate the effect of the distorted meshes on the maximum time step that can be taken with the TPSS operators relative to the dense SBP diagonal-$\E$ operator. For this study, a time step is deemed stable if it produces a change in energy of zero or less than zero after one period. \cref{tab:adv cfl warped 2d} shows the maximum time step that can be taken on the uniform and distorted triangular meshes. As can be seen from the table, the maximum time step with the TPSS operators is slightly reduced on the distorted mesh compared to those obtained on the $M_0^{2}$ mesh; however, the operators still offer stable time steps greater than $50\%$ of those achieved with the dense SBP diagonal-$\E$ operators. Given the number of nodes in the TPSS operators and the severity of the mesh perturbation, with aspect ratios up to $1172.8$ and interior angle up to $179.6$ degrees, the reduction in the maximum time step can be regarded as minimal. 
\begin{table*}[t]
	\footnotesize
	\centering
	\captionsetup{skip=3pt}
	\caption{\label{tab:adv cfl warped 2d} Maximum time-step restrictions and percent ratios ($r$) for the TPSS and dense SBP diagonal-$\E$ operators applied for the advection problem on the uniform and distorted triangular meshes.}
	\begin{threeparttable}
		\setlength{\tabcolsep}{0.125em}
		\renewcommand*{\arraystretch}{1.5}
		\begin{tabular}{l@{\hspace{0.4em}}lll @{\hspace{0.4em}}lll @{\hspace{0.4em}}lll @{\hspace{0.4em}}lll @{\hspace{0.4em}}lll}
			\toprule
			&  \multicolumn{3}{l}{$p=1$ }&  \multicolumn{3}{l}{$p=2$} &  \multicolumn{3}{l}{$p=3$} &  \multicolumn{3}{l}{$p=4$} &  \multicolumn{3}{l}{$p=5$}   \\
			\cmidrule(l{0em}r{0.5em}){2-4} \cmidrule(l{0em}r{0.5em}){5-7} \cmidrule(l{0em}r{0.5em}){8-10} \cmidrule(l{0em}r{0.5em}){11-13} \cmidrule(l{0em}r{0.3em}){14-16}  
			Mesh&TPSS&SBP-$\E$& $r (\%)$& TPSS&SBP-$\E$& $r (\%)$& TPSS&SBP-$\E$& $r (\%)$& TPSS&SBP-$\E$& $r (\%)$& TPSS&SBP-$\E$& $r (\%)$\\
			\midrule 
			$M_{0}^{2}$&2.89e-03 & 4.00e-03 & 72.2 & 1.62e-03 & 2.28e-03 & 70.9 & 1.05e-03 & 1.37e-03 & 76.8 & 7.53e-04 & 1.02e-03 & 73.6 & 5.68e-04 & 6.30e-04 & 90.2 \\
			$M_{1}^{2}$& 2.48e-03 & 3.52e-03 & 70.5 & 1.33e-03 & 2.03e-03 & 65.5 & 8.06e-04 & 1.26e-03 & 64.0 & 5.37e-04 & 9.06e-04 & 59.3 & 3.82e-04 & 5.65e-04 & 67.5\\
			$M_{2}^{2}$&3.56e-04 & 5.46e-04 & 65.1 & 1.95e-04 & 3.17e-04 & 61.4 & 1.21e-04 & 2.07e-04 & 58.5 & 8.22e-05 & 1.50e-04 & 54.7 & 5.95e-05 & 1.03e-04 & 57.7 \\
			$M_{3}^{2}$&2.98e-05 & 4.54e-05 & 65.6 & 1.64e-05 & 2.62e-05 & 62.3 & 1.02e-05 & 1.71e-05 & 59.5 & 6.93e-06 & 1.24e-05 & 55.7 & 5.01e-06 & 8.52e-06 & 58.8 \\
			\bottomrule
		\end{tabular}
	\end{threeparttable}
\end{table*}

We conduct a similar study in three dimensions to investigate the effect of the quality of tetrahedral meshes on the accuracy and maximum time step of the TPSS operators. The three-dimensional advection problem is run on a uniform mesh, $M_{0}^{3}$, produced by dividing a $6\times6\times6$ hexahedral elements and two distorted meshes obtained by applying the perturbation function,
\begin{equation}
	\begin{aligned}
		x_{1} &= \hat{x}_{1} \exp(0.5(\hat{x}_{1}-1)) + 0.4\sin(\pi \hat{x}_1)\sin(\pi \hat{x}_2)\sin(\pi \hat{x}_3),\\
		x_{2} &= \hat{x}_{2} \exp(0.5(\hat{x}_{2}-1)) + 0.4\sin(\pi \hat{x}_1)\sin(\pi \hat{x}_2)\sin(\pi \hat{x}_3),\\
		x_{3} &= \hat{x}_{3} \exp(\alpha_{m}(\hat{x}_{3}-1)),
	\end{aligned}
\end{equation}
where $\alpha_{m}=0.03$ and $\alpha_{m}=3$ are used to generate a slightly distorted, $M_{1}^{3}$, and moderately distorted, $M_{2}^{3}$, meshes shown in \cref{fig:warped mesh 3d}. 
\begin{figure}[t]
	\centering
	\captionsetup{belowskip=0pt, aboveskip=4pt}
	\begin{subfigure}{0.5\textwidth}
		\centering
		\includegraphics[scale=0.06]{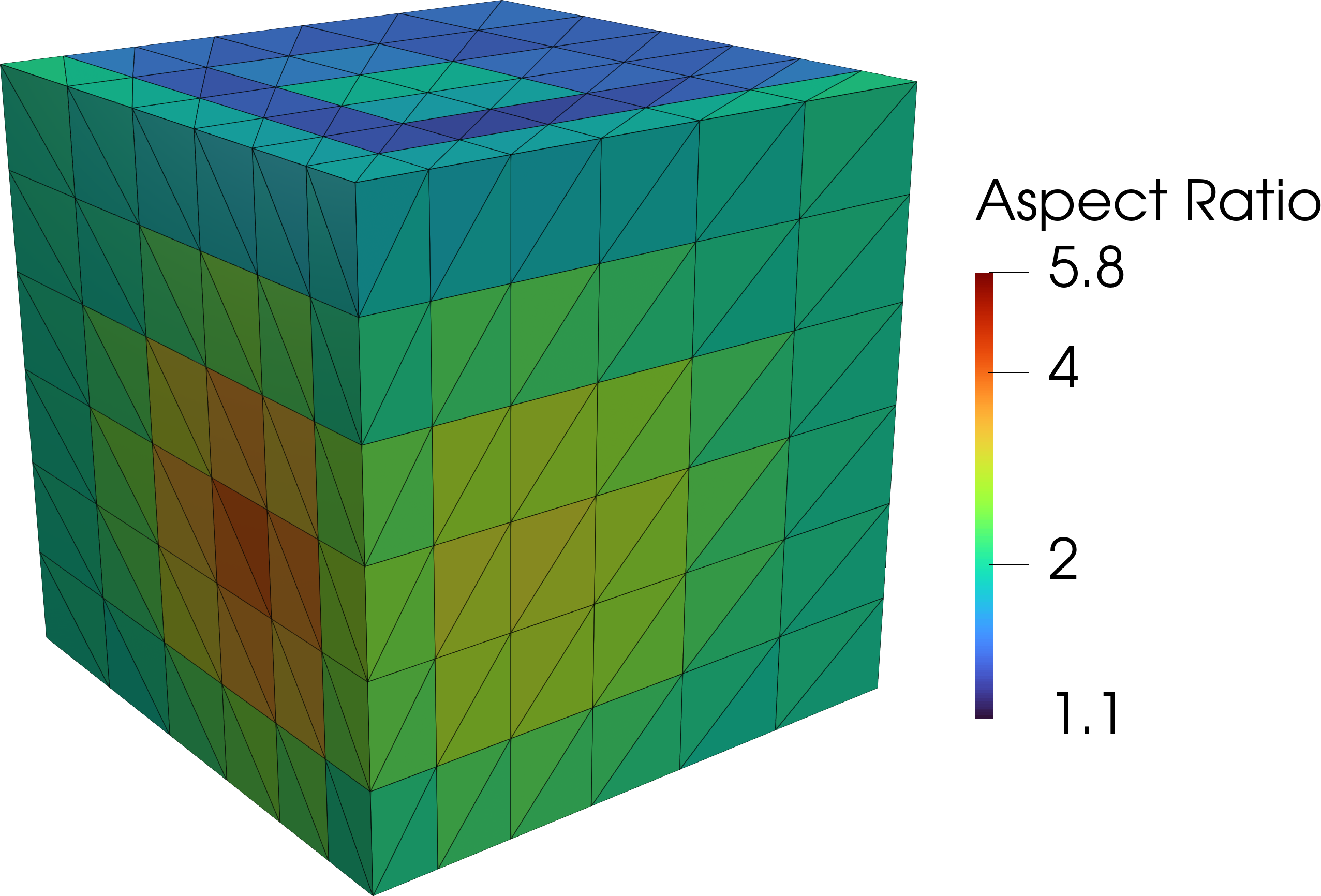}
		\caption{Slightly distorted mesh, $M_1^{3}$}
	\end{subfigure}\hfill
	\begin{subfigure}{0.5\textwidth}
		\centering
		\includegraphics[scale=0.06]{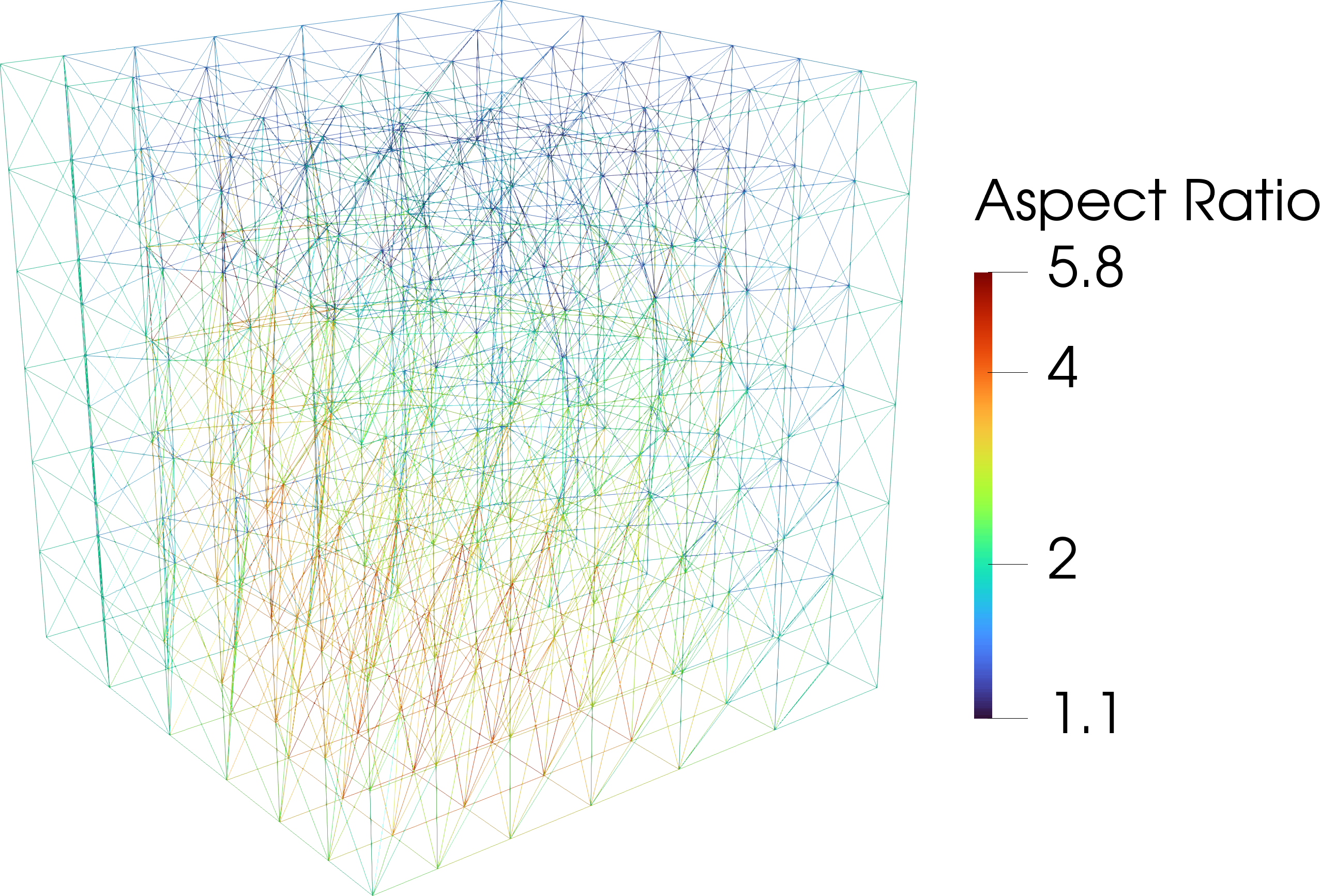}
		\caption{Aspect ratio of $M_1^{3}$}
	\end{subfigure} 
	\vspace{3mm}
	\\
	\begin{subfigure}{0.5\textwidth}
		\centering
		\includegraphics[scale=0.06]{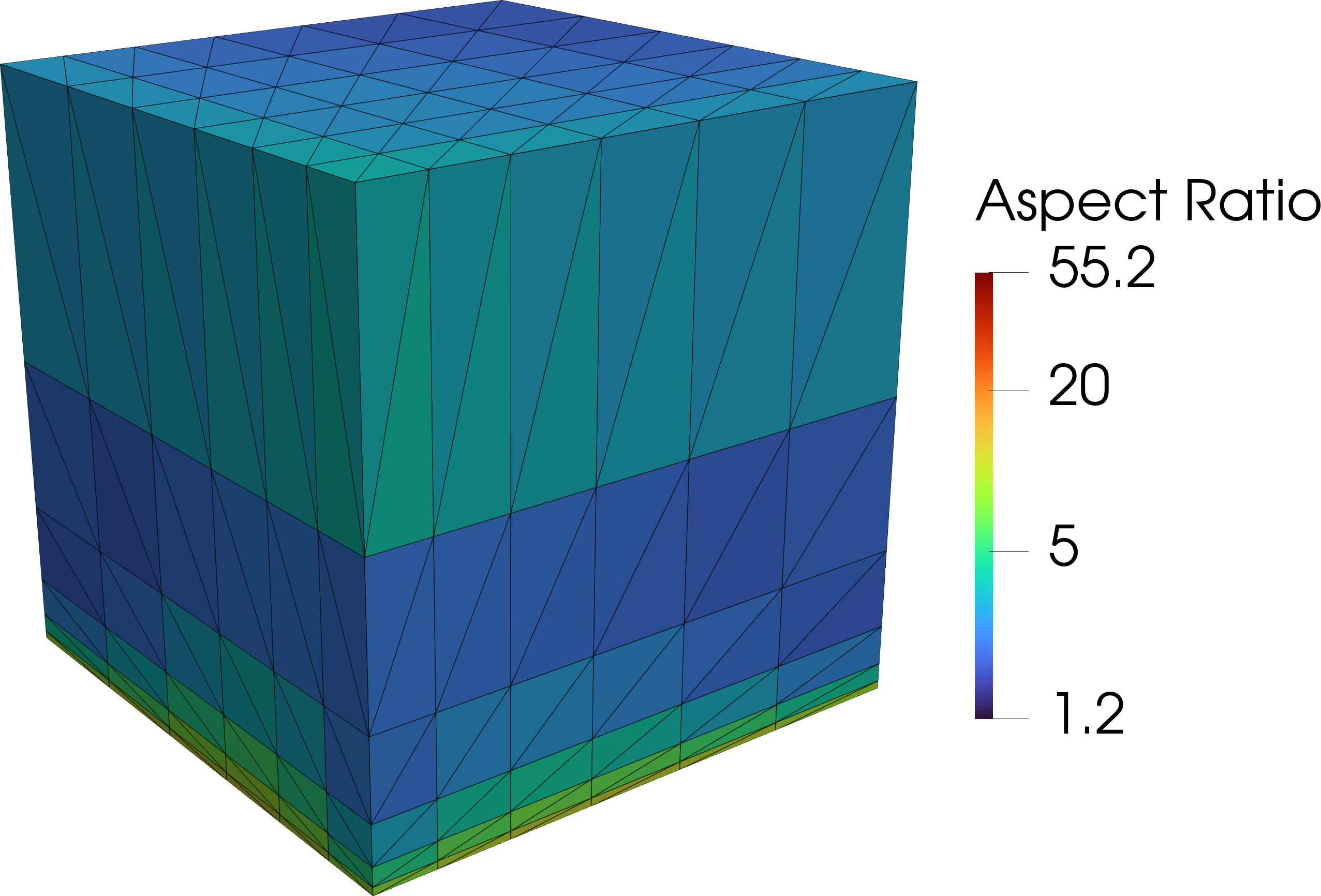}
		\caption{Moderately distorted mesh, $M_2^{3}$}
	\end{subfigure}\hfill
	\begin{subfigure}{0.5\textwidth}
		\centering
		\includegraphics[scale=0.06]{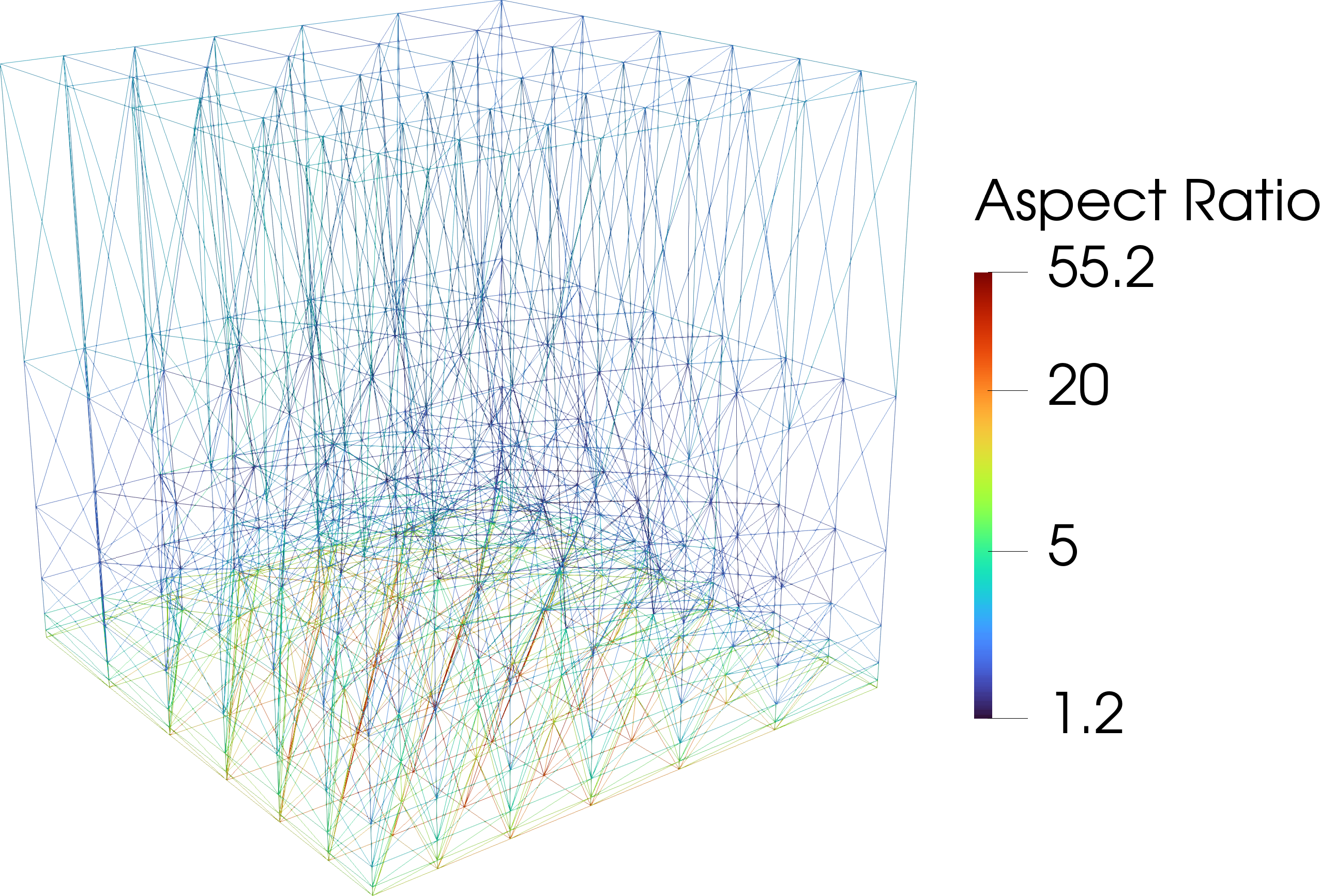}
		\caption{Aspect ratio of $M_2^{3}$}
	\end{subfigure} 
	\caption{\label{fig:warped mesh 3d} Distorted meshes with $1296$ tetrahedral elements to study the accuracy and maximum stable time step of the TPSS operators for the advection problem.}
\end{figure}

The results for the change in accuracy with mesh distortion are shown in \cref{tab:adv 3d mesh}. As the order of the operator increases, the $\H$-norm solution errors with the TPSS operators reduce at faster rates than those obtained with the dense SBP diagonal-$\E$ operators, reaching up to $5199.4$ times smaller values at $p=5$ on the uniform mesh. While this error reduction on the same number of elements (as opposed to the same number of degrees of freedom) does not necessarily translate into equivalent efficiency gains, it can be concluded that the TPSS operators will be even more efficient at higher operator degrees due to their increasing sparsity (see \cref{tab:nnz TPSS}) and accuracy. That being said, the errors from the TPSS operators on the distorted mesh increase by larger factors compared to those of the dense SBP diagonal-$\E$ operators. However, as in the 2D case, the increase in error values does not appear to worsen significantly with the severity of the mesh distortion. Note that $M_{2}^{3}$ has a maximum aspect ratio of $55.2$, which is an order of magnitude larger than that of $M_{1}^{3}$, but the ratios of the errors from the dense SBP diagonal-$\E$ to those of the TPSS operators on $M_{2}^{3}$ remain close to the ratios obtained on the $M_{1}^{3}$ mesh.
\begin{table*}[t]
	\footnotesize
	\centering
	\captionsetup{skip=3pt}
	\caption{\label{tab:adv 3d mesh} Comparison of the change in accuracy for the advection problem on the distorted and uniform tetrahedral meshes using dense SBP diagonal-$\E$ and the TPSS operators.}
	\begin{threeparttable}
		\setlength{\tabcolsep}{0.265em}
		\renewcommand*{\arraystretch}{1.2}
		\begin{tabular}{cl @{\hspace{1.0em}}lllll lllll}
			\toprule
			& &  \multicolumn{2}{l}{$p=1$ }&  \multicolumn{2}{l}{$p=2$} &  \multicolumn{2}{l}{$p=3$} &  \multicolumn{2}{l}{$p=4$} &  \multicolumn{2}{l}{$p=5$}   \\
			\cmidrule(l{0em}r{0.4em}){3-4} \cmidrule(l{0em}r{0.4em}){5-6} \cmidrule(l{0em}r{0.4em}){7-8} \cmidrule(l{0em}r{0.4em}){9-10} \cmidrule(l{0em}r{0.4em}){11-12}  
			Mesh & & $\H$-norm& $L^\infty$-norm & $\H$-norm& $L^\infty$-norm& $\H$-norm& $L^\infty$-norm& $\H$-norm& $L^\infty$-norm& $\H$-norm& $L^\infty$-norm\\
			\midrule 
			\multirow[t]{3}{*}{$M_{0}^{3}$} & SBP-$\E$ & 7.932e-02& 2.002e-01& 2.609e-02& 8.016e-02& 2.336e-03& 1.786e-02& 3.700e-04&3.425e-03 & 5.765e-05& 9.183e-04  \\
			& TPSS & 1.061e-03& 1.499e-02& 8.406e-05& 1.012e-03& 4.481e-06& 6.060e-05&3.099e-07 & 4.618e-06&1.109e-08 &2.961e-07   \\
			& Ratio&74.7 & 13.4 & 310.4 & 79.2 & 521.2 & 294.7 & 1194.0 & 741.6 & 5199.4 & 3101.4 \\
			\addlinespace
			\multirow[t]{3}{*}{$M_{1}^{3}$} & SBP-$\E$  & 1.322e-01 & 4.524e-01 & 5.806e-02 & 4.998e-01 & 1.144e-02 & 2.636e-01 & 2.950e-03 & 1.338e-01 & 6.244e-04 & 2.763e-02\\
			& TPSS & 2.727e-03 & 6.152e-02 & 3.085e-04 & 1.150e-02 & 4.229e-05 & 2.208e-03 & 4.015e-06 & 2.510e-04 & 4.313e-07 & 4.991e-05 \\
			& Ratio & 48.5 & 7.3 & 188.2 & 43.5 & 270.4 & 119.4 & 734.8 & 533.1 & 1447.7 & 553.6\\
			\addlinespace
			\multirow[t]{3}{*}{$M_{2}^{3}$} & SBP-$\E$ &1.870e-01 & 8.810e-01& 8.032e-02& 6.056e-01& 1.637e-02& 2.012e-01& 3.948e-03& 7.191e-02& 8.071e-04& 1.916e-02  \\
			& TPSS & 3.977e-03&5.105e-02 & 4.371e-04& 1.058e-02&6.438e-05 & 1.843e-03& 5.057e-06& 1.766e-04& 5.821e-07& 1.851e-05  \\
			& Ratio &47.0 & 17.3 & 183.8 & 57.2 & 254.2 & 109.2 & 780.8 & 407.2 & 1386.6 & 1034.8 \\
			\bottomrule
		\end{tabular}
	\end{threeparttable}
\end{table*}

Finally, \cref{tab:adv cfl warped 3d} shows the maximum stable time-step values obtained with the TPSS and dense SBP diagonal-$\E$ operators on the distorted three-dimensional meshes. The criterion for determining the maximum stable time step is the same as the 2D case. The TPSS operators on the tetrahedral meshes show a reduction in the stable maximum time step on the distorted meshes. However, as in the 2D case, this reduction does not seem to worsen significantly from $M_{1}^{3}$ to $M_{2}^{3}$. While further research is required to examine the properties of the TPSS operators on highly distorted meshes, the current results suggest that, relative to the dense SBP diagonal-$\E$ operators, neither their errors nor their stable time-step values deteriorate substantially with increasing distortion of meshes in both two and three dimensions.
\begin{table*}[t]
	\footnotesize
	\centering
	\captionsetup{skip=3pt}
	\caption{\label{tab:adv cfl warped 3d} Maximum time-step restrictions and percent ratios ($r$) for the TPSS and dense SBP diagonal-$\E$ operators  applied for the advection problem on the uniform and distorted tetrahedral meshes.}
	\begin{threeparttable}
		\setlength{\tabcolsep}{0.125em}
		\renewcommand*{\arraystretch}{1.5}
		\begin{tabular}{l@{\hspace{0.4em}}lll @{\hspace{0.4em}}lll @{\hspace{0.4em}}lll @{\hspace{0.4em}}lll @{\hspace{0.4em}}lll}
			\toprule
			&  \multicolumn{3}{l}{$p=1$ }&  \multicolumn{3}{l}{$p=2$} &  \multicolumn{3}{l}{$p=3$} &  \multicolumn{3}{l}{$p=4$} &  \multicolumn{3}{l}{$p=5$}   \\
			\cmidrule(l{0em}r{0.5em}){2-4} \cmidrule(l{0em}r{0.5em}){5-7} \cmidrule(l{0em}r{0.5em}){8-10} \cmidrule(l{0em}r{0.5em}){11-13} \cmidrule(l{0em}r{0.3em}){14-16}  
			Mesh &TPSS&SBP-$\E$& $r (\%)$& TPSS&SBP-$\E$& $r (\%)$& TPSS&SBP-$\E$& $r (\%)$& TPSS&SBP-$\E$& $r (\%)$& TPSS&SBP-$\E$& $r (\%)$\\
			\midrule 
			$M_{0}^{3}$&1.05e-02 & 2.63e-02 & 40.0 & 6.71e-03 & 4.41e-03 & 152.3 & 4.67e-03 & 6.71e-03 & 69.6 & 3.32e-03 & 3.34e-03 & 99.3 & 2.46e-03 & 3.98e-03 & 61.7 \\
			$M_{1}^{3}$ & 2.13e-03 & 7.04e-03 & 30.3 & 1.31e-03 & 1.31e-03 & 100.5 & 8.87e-04 & 2.45e-03 & 36.3 & 6.38e-04 & 1.07e-03 & 59.4 & 4.79e-04 & 8.67e-04 & 55.3\\
			$M_{2}^{3}$&6.15e-04 & 2.18e-03 & 28.2 & 3.76e-04 & 4.14e-04 & 90.8 & 2.53e-04 & 7.79e-04 & 32.4 & 1.81e-04 & 3.28e-04 & 55.4 & 1.36e-04 & 2.55e-04 & 53.4 \\
			\bottomrule
		\end{tabular}
	\end{threeparttable}
\end{table*}
	
\subsection{Isentropic vortex problem}
The isentropic vortex problem is a smooth problem governed by the Euler equations. Both the 2D and 3D versions of the problem are used to study accuracy, entropy conservation, and efficiency of the TPSS operators. The analytical solution for the 2D case on the domain $ \Omega = \left[0,20\right]\times\left[-5,5\right] $ is given as \cite{erlebacher1997interaction},
\begin{equation}
	\begin{aligned}
		\rho & =\left(1-\frac{\alpha^{2}\left(\gamma-1\right)}{16\gamma\pi}\exp\left(2\left(1-r^{2}\right)\right)\right)^{\frac{1}{\gamma-1}}, & p & =\rho^{\gamma},\\
		V_{1} & =1 - \frac{\alpha}{2\pi}\left({x_{2}}-y_{c}\right)\exp\left(1-r^{2}\right), &
		V_{2} & =\frac{\alpha}{2\pi}\left({x_{1}}-\left(x_{c}+t\right)\right)\exp\left(1-r^{2}\right),
	\end{aligned}
\end{equation}
where $r^{2}=\left({x_{1}}-x_{c}+t\right)^{2}+\left({x_{2}}-y_{c}\right)^{2}$,
$\alpha=3$ is the vortex strength, $\left(x_c,y_c\right)=\left(5,0\right)$ is the center of the vortex at $ t=0 $,
and $\gamma=7/5$. The grids are curved using the perturbation \cite{chan2019efficient}
\begin{equation} \label{eq:curve 2d vortex}
	\begin{aligned}{x_{1}} & =\hat{{x}}_{1}+\frac{1}{8}\cos\left(\frac{\pi}{20}\left(\hat{x}_{1}-10\right)\right)\cos\left(\frac{3\pi}{10}\hat{{x}}_{2}\right), &
		{x_{2}} & =\hat{{x}}_2+\frac{1}{8}\sin\left(\frac{\pi}{5}\left({x_{1}}-10\right)\right)\cos\left(\frac{\pi}{10}\hat{{x}}_{2}\right).
	\end{aligned}
\end{equation}
In 3D, we use the analytical solution \cite{williams2013nodal},
\begin{equation}
	\medmuskip=-1mu
	\begin{aligned}
		\rho & =\left(1-\frac{2}{25}\left(\gamma-1\right)\exp\left(1-\left({x_{2}}-t\right)^{2}-{x}_{1}^{2}\right)\right)^{\frac{1}{\gamma-1}}, \; e=\frac{\rho^{\gamma}}{\gamma(\gamma-1)}+\frac{\rho}{2}\left({V_{1}}^{2}+{V_{2}}^{2}+{V_{3}}^{2}\right),  \\
		{V_{1}} & =-\frac{2}{5}\left({x_{2}}-t\right)\exp\left(\frac{1}{2}\left[1-\left({x_{1}}-t\right)^{2}-{x}_{1}^{2}\right]\right), \;
		{V_{2}}  =1+\frac{2}{5}{x_{1}}\exp\left(\frac{1}{2}\left[1-\left({x_{2}}-t\right)^{2}-{x}_{1}^{2}\right]\right), \;
		{V_{3}} =0.
	\end{aligned}
\end{equation}
The domain is $\Omega = \left[-10,10\right]^{3}$, and the grids are curved according
to the perturbation 
\begin{equation}\label{eq:curve 3d vortex}
	{x}_{i}=\hat{{x}}_{i}+0.05\sin\left(\frac{\pi\hat{{x}}_{i}}{2}\right),\quad\forall i\in\{1,2,3\}.
\end{equation} 
The perturbations are applied as degree-two polynomial geometric mappings; \ie, first, the perturbation is applied to the nodes of the degree-two multidimensional Lagrange finite elements, then a degree-two polynomial interpolation is applied to find the perturbed nodes of the SBP operators.

The problem is solved with the TPSS and dense SBP diagonal-$\E$ operators. The goal is to investigate the efficiency of the TPSS operators and show that they produce entropy-conservative discretizations if no upwinding of the SATs or artificial dissipation is introduced. In all cases, we use sufficiently small CFL values to ensure that the temporal discretization errors are negligible.  The convergence rates for the 2D and 3D cases are tabulated in \cref{sec:grid conv ivp}, \cref{tab:grid conv ivp}, which shows that both the TPSS and dense SBP diagonal-$\E$ operators attain convergence rates between $p$ and $p+1$ in the $\H$-norm. \cref{fig:ivp acc} shows the accuracy per degree of freedom comparison between the TPSS and dense SBP diagonal-$\E$ operators computed at $t=2$ and $t=1$ for the 2D and 3D cases, respectively. Again, the TPSS operators perform much better in terms of accuracy for a given number of degrees of freedom, producing more than an order of magnitude smaller error in the three-dimensional case.
\begin{figure}[t]
	\centering
	\captionsetup{belowskip=0pt, aboveskip=3pt}
	\begin{subfigure}{0.5\textwidth}
		\centering
		\includegraphics[scale=0.5]{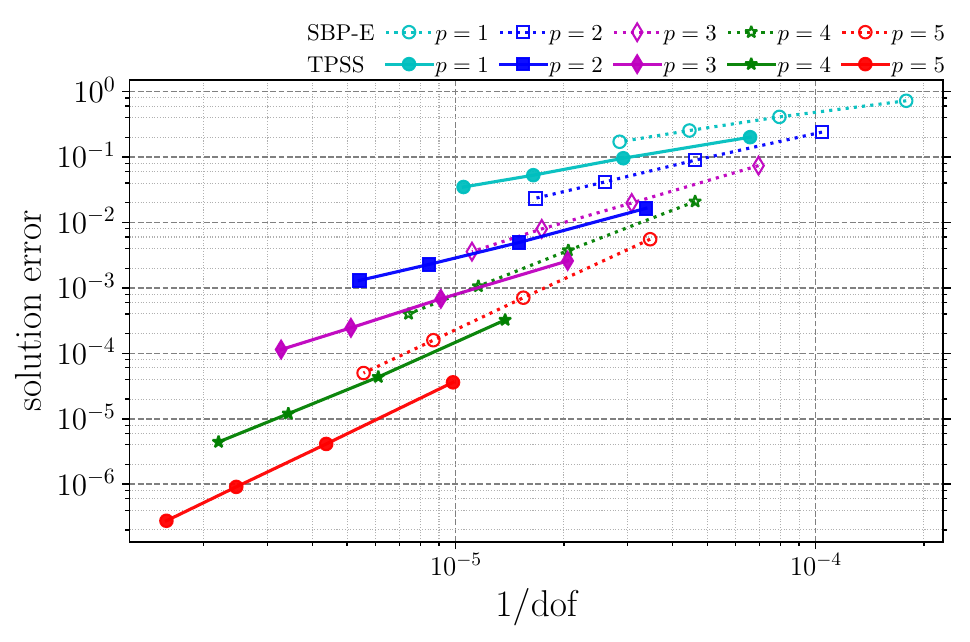}
		%\caption{\label{fig:ivp 2d err} 2D}
	\end{subfigure}\hfill
	\begin{subfigure}{0.5\textwidth}
		\centering
		\includegraphics[scale=0.5]{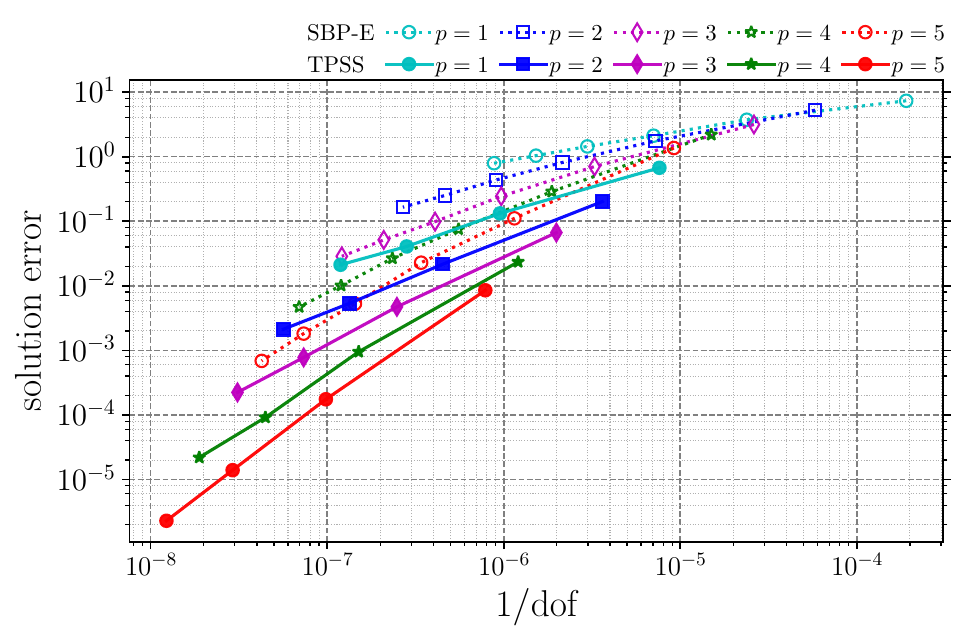}
		%\caption{\label{fig:ivp 3d err} 3D}
	\end{subfigure} 
	\caption{\label{fig:ivp acc} Grid convergence study of the $\H$-norm solution error for the 2D (left) and 3D (right) isentropic vortex problems.}
\end{figure}
Furthermore, \cref{fig:ivp res time} shows a large efficiency gap between the two types of operators in terms of computational runtime and accuracy after a single time step (with a step size of $10^{-3}$). On triangles, the TPSS operators require at least 4 times less computational time to achieve the same error levels as the dense SBP diagonal-$\E$ operators, while on the tetrahedron, they require about 20 times less computational time. For a fixed computational time, the TPSS operators yield about one and two orders of magnitude lower error in two and three dimensions, respectively. \ignore{It is interesting that some of the TPSS operators with degrees less than $p$ outperform the degree $p$ SBP diagonal-$\E$ operators in both two and three dimensions, although this may not hold on sufficiently fine meshes due to the difference in convergence rates.}
\begin{figure}[t]
	\centering
	\captionsetup{belowskip=0pt, aboveskip=4pt}
	\begin{subfigure}{0.5\textwidth}
		\centering
		\includegraphics[scale=0.5]{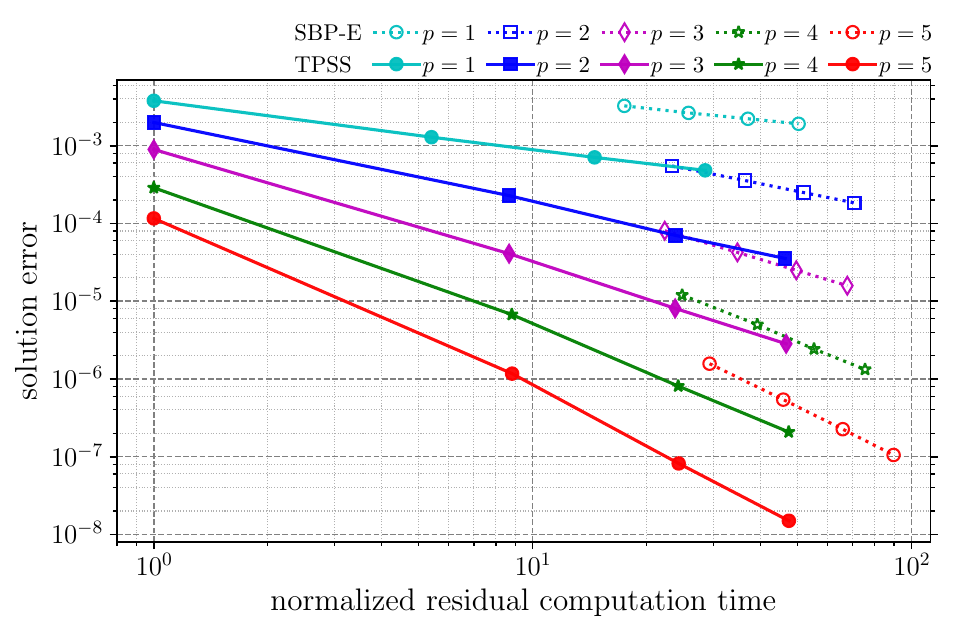}
		%\caption{\label{fig:ivp 2d res time} 2D}
	\end{subfigure}\hfill
	\begin{subfigure}{0.5\textwidth}
		\centering
		\includegraphics[scale=0.5]{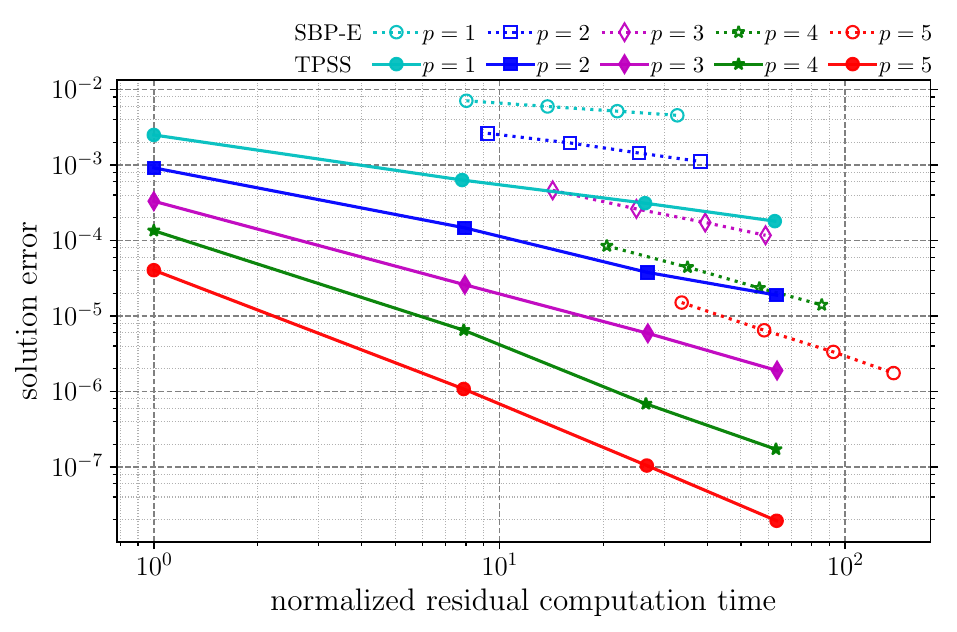}
		%\caption{\label{fig:ivp 3d res time} 3D}
	\end{subfigure} 
	\caption{\label{fig:ivp res time} Normalized computational runtime versus the $\H$-norm solution error for the 2D (left) and 3D (right) isentropic vortex problem.}
\end{figure}
\ignore{
	The $\H$-norm is an approximation to the $L^2$-norm, and it is possible that some types of SBP operators have $\H$ matrices that approximate the $L^2$-norm better than others. This can affect the solution error values. Hence, for a sanity check, the $L^{\infty}$-norm of the solution errors with the TPSS and SBP diagonal-$\E$ operators is computed after one time step. The results in \cref{fig:ivp res time linf} show that the solution errors in the $L^{\infty}$-norm are close to the values obtained with the $\H$-norm, suggesting that the integration error due to the use of the $\H$-norm is negligible. 
	
	\begin{figure}[t]
		\centering
		\captionsetup{belowskip=0pt, aboveskip=4pt}
		\begin{subfigure}{0.5\textwidth}
			\centering
			\includegraphics[scale=0.5]{2d_total_time_SBP_E_vs_tss_ivp_err_vs_time_linf.pdf}
			%\caption{\label{fig:ivp 2d res time} 2D}
		\end{subfigure}\hfill
		\begin{subfigure}{0.5\textwidth}
			\centering
			\includegraphics[scale=0.5]{3d_total_time_SBP_E_vs_tss_ivp_err_vs_time_linf.pdf}
			%\caption{\label{fig:ivp 3d res time} 3D}
		\end{subfigure} 
		\caption{\label{fig:ivp res time linf} Computational runtime versus the $L^{\infty}$-norm solution error for the 2D (left) and 3D (right) isentropic vortex problem.}
	\end{figure}
}

Finally, to verify that the TPSS operators are constructed such that the SBP property is satisfied and thus lead to entropy-conservative discretizations, we run the isentropic vortex problem using the TPSS operators up to $t=20$ on coarse meshes with $18$ triangular elements in two dimensions and $162$ tetrahedral elements in three dimensions. For the 2D case, the change in entropy is computed as $\Delta s_t = s_t - s_0$, 
where $ s_t $ and $ s_0 $ are the integrals of the mathematical entropy, $ \fn{S}(\bm{x},t) = -\rho \ln(p\rho^{-\gamma})/(\gamma -1)$, at time $ t $ and the initial time $t=0$, respectively. In the 3D case, $\Delta s_t$ is normalized by $s_0$, as the entropy is nonzero. The integral of the mathematical entropy is approximated as $ s = \sum_{\Omega_{k}\in\fn{T}_h}\bm{1}^T\H_{k}\bm{s}_k $. The change in the integral of the entropy is shown in \cref{fig:ivp entropy}, where it can be seen that the entropy is conserved to machine precision.
\begin{figure}[t]
	\centering
	\captionsetup{belowskip=0pt, aboveskip=3pt}
	\begin{subfigure}{0.5\textwidth}
		\centering
		\includegraphics[scale=0.36]{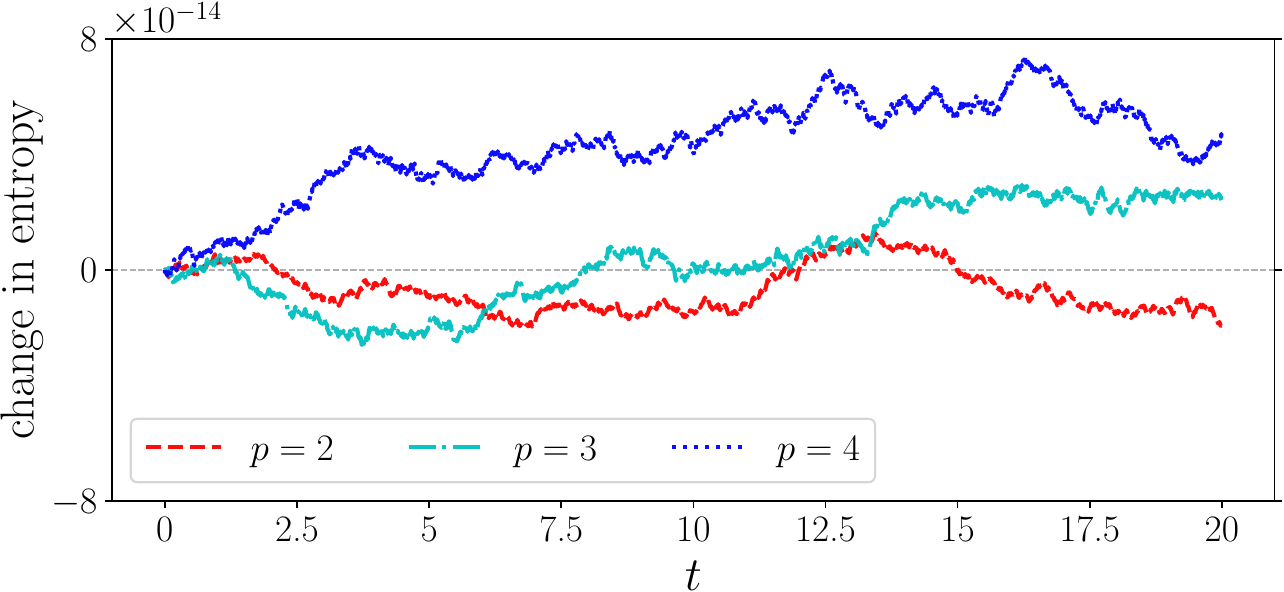}
		%\caption{\label{fig:ivp 2d entropy} 2D}
	\end{subfigure}\hfill
	\begin{subfigure}{0.5\textwidth}
		\centering
		\includegraphics[scale=0.36]{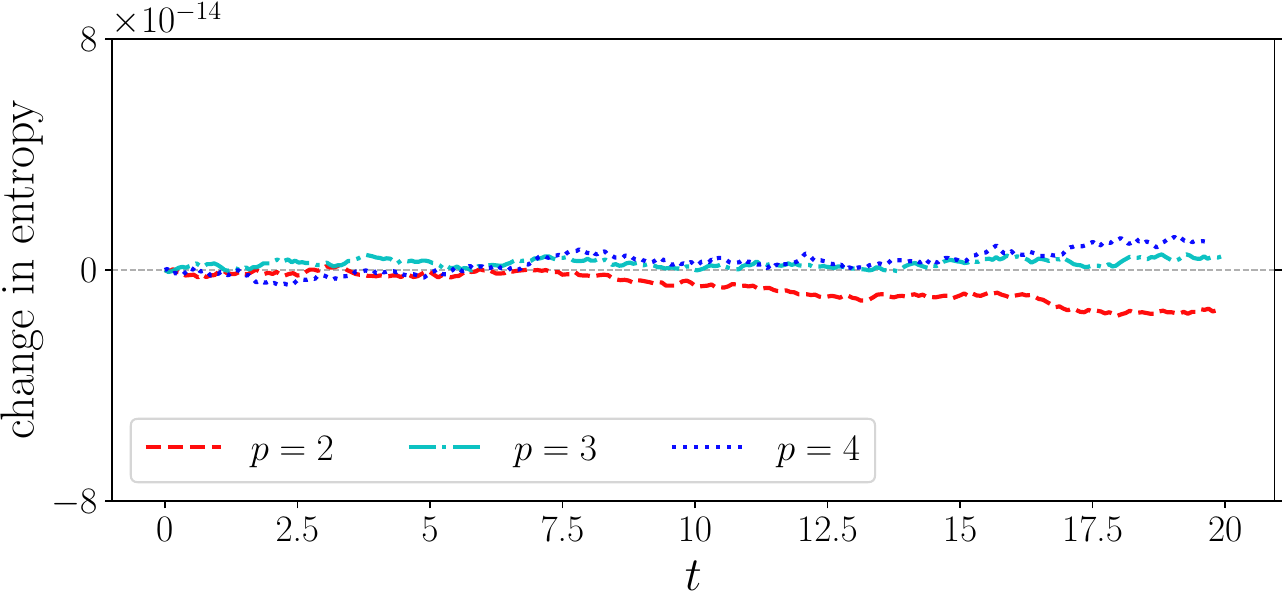}
		%\caption{\label{fig:ivp 3d entropy} 3D}
	\end{subfigure} 
	\caption{\label{fig:ivp entropy} Entropy conservation of the discretization of the 2D (left) and 3D (right) isentropic vortex problem with the TPSS operators.}
\end{figure}

\ignore{
\subsection{Manufactured Solutions for the Navier--Stokes Equations}
The method of manufactured solutions is used to study the efficiency of the TPSS operators relative to the dense SBP diagonal-$\E$ operators for the Navier--Stokes equations. To this end, we use the manufactured solution
\begin{alignat*}{4}
	\rho &=2.0+ 0.1\sin(\pi \left(\Sigma_{i=1}^{d}\bm{x}_i - 0.6t\right)), & \quad  V_i&= 1, &  \quad p& = (\gamma - 1)(\rho ^2 -0.5 \rho \bm{V}^T\bm{V}), \quad i=\{1,\dots, d\},
\end{alignat*}
defined on the periodic domain $ \Omega = [-1,1]^d $. The source terms are obtained by substituting the manufactured solution into the compressible Navier--Stokes equations. A constant viscosity, $ \mu = 10^{-3} $, is used, and the gas constant and Prandtl number are set to $ R=1 $ and $ Pr=0.71 $, respectively.    

The solution error after one time step in the $\H$-norm versus the spatial residual computational time is plotted in \cref{fig:mms res time}. The TPSS operators require about $4$ times less computational time to achieve the same error threshold as the SBP diagonal-$\E$ operators on triangles. On tetrahedra, they require at least one order of magnitude times less computational time. Furthermore, for a fixed computational time, the TPSS operators yield about two orders of magnitude more accurate results. 
\begin{figure}[t]
	\centering
	\captionsetup{belowskip=0pt, aboveskip=4pt}
	\begin{subfigure}{0.5\textwidth}
		\centering
		\includegraphics[scale=0.5]{2d_total_time_SBP_E_vs_tss_nse_mms_err_vs_time.pdf}
		%\caption{\label{fig:mms 2d res time} 2D}
	\end{subfigure}\hfill
	\begin{subfigure}{0.5\textwidth}
		\centering
		\includegraphics[scale=0.5]{3d_total_time_SBP_E_vs_tss_nse_mms_err_vs_time.pdf}
		%\caption{\label{fig:mms 3d res time} 3D}
	\end{subfigure} 
	\caption{\label{fig:mms res time} Computational runtime versus the $\H$-norm solution error for the 2D (left) and 3D (right) Navier--Stokes equations with manufactured solutions.}
\end{figure}
}

\subsection{Viscous Taylor--Green vortex problem} 
The viscous Taylor--Green vortex problem is governed by the Navier--Stokes equations and is defined on the domain $ \Omega = \left[-\pi,\pi\right]^{3} $ with the initial conditions \cite{debonis2013solutions}
\begin{alignat*}{3}
	{V_{1}}&=u_{0}\sin\left(\frac{{x_{1}}}{L}\right)\cos\left(\frac{{x_{2}}}{L}\right)\cos\left(\frac{{x_{3}}}{L}\right)\!, \quad p\!=\!p_{0}+\frac{\rho_{0}u_{0}^{2}}{16}\left[\cos\left(\frac{2{x_{1}}}{L}\right)+\cos\left(\frac{2{x_{2}}}{L}\right)\right]\left[\cos\left(\frac{2{x_{3}}}{L}\right)+2\right]\!\!,\\
	{V_{2}}&=-u_{0}\cos\left(\frac{{x_{1}}}{L}\right)\sin\left(\frac{{x_{2}}}{L}\right)\cos\left(\frac{{x_{3}}}{L}\right)\!, \quad {V_{3}}=0, \quad  \rho=\frac{p}{RT_{0}},
\end{alignat*}
where $ u_{0} =  \rho_{0}= R = L = 1$, $ T_{0} = p_{0}/(R\rho_{0}) $, $ M=u_{0}/\sqrt{\gamma R T_{0}}=0.1 $ is the Mach number, and $ p_{0} = \rho_{0} u_{0}^{2}/(\gamma M^2)$. The Reynolds number is $ Re=\rho_{0} u_{0}L/\mu_{0}=1600$, where $ \mu_{0} $ is a constant viscosity, and the Prandtl number is set to $ Pr=0.71 $. The scaled kinetic energy is computed as
\begin{equation}
	E_{k}=\frac{1}{2\left|\Omega\right|}\int_{\Omega}\rho{\bm{V}\cdot \bm{V}}\dd{\Omega},
\end{equation}
where $ \left|\Omega\right|=8\pi^{3} $, $\bm{V}=[V_1,V_2,V_3]^T$ is the velocity vector, and the kinetic energy dissipation rate is given by 
\begin{equation}
	\varepsilon=-\der[E_{k}]t,
\end{equation}
which is computed using the fifth-order CSBP derivative operator, \ie, the derivative operator is applied to the column vector containing the values of the kinetic energy, $E_{k}$, at constant time steps from $t=0$ up to $t=20$. The enstrophy is given by 
\begin{equation}
	\zeta=\frac{1}{2\left|\Omega\right|}\int_{\Omega}\rho{\bm{\omega}\cdot \bm{\omega}}\dd{\Omega},
\end{equation}
where $\bm{\omega}=\nabla\times\bm{V}$ is the vorticity vector. 

The TPSS operators are applied to the Taylor--Green vortex problem to further verify their accuracy and stability. Operators with polynomial degrees of 3, 4, and 5 and meshes with numbers of degrees of freedom approximately equal to $128^3$ and $164^3$ are used for this study. The ratios of the actual to the nominal number of degrees of freedom for each case are presented in \cref{tab:tgv dof}. All the results for the kinetic energy, dissipation rate, and enstrophy with the TPSS and dense SBP diagonal-$\E$ operators of varying degrees and degrees of freedom are provided in \cref{sec:tgv figures}. In general, the results show good agreement with the DNS results of Dairay \etal, \cite{dairay2017numerical}. 
\begin{table*}[t]
	\footnotesize
	\centering
	\captionsetup{skip=3pt}
	\caption{\label{tab:tgv dof} Ratios of the actual to nominal number of degrees of freedom for the Taylor--Green vortex problem.}
	\begin{threeparttable}
		\setlength{\tabcolsep}{0.4em}
		\renewcommand*{\arraystretch}{1.2}
		\begin{tabular}{r@{\hspace{1.5em}}ll @{\hspace{1.5em}}ll @{\hspace{1.5em}}ll}
			\toprule
			& \multicolumn{2}{l}{$p=3$} &  \multicolumn{2}{l}{$p=4$} &  \multicolumn{2}{l}{$p=5$}   \\
			\cmidrule(l{0em}r{1em}){2-3} \cmidrule(l{0em}r{1em}){4-5} \cmidrule(l{0em}r{1em}){6-7}  
			Nominal DOF &$128^3$&$164^3$& $128^3$&$164^3$& $128^3$&$164^3$\\
			\midrule 
			SBP-$\E$ & 1.0008& 1.0840 &1.0433 &1.1212 & 1.1383& 1.1503 \\
			TPSS &0.9829 & 0.9127& 1.0844&  1.0957& 1.0475& 1.1805 \\
			\bottomrule
		\end{tabular}
	\end{threeparttable}
\end{table*}

Compared to the dense SBP diagonal-$\E$ operators, the TPSS operators yield substantially more accurate results with lower operator degrees and on meshes with fewer degrees of freedom. For instance, \cref{fig:tgv p345 zoomed} shows that close to the starting time (before significant error accumulation) the degree $p=3$ TPSS operator with $128^3$ degrees of freedom yields more accurate results than the degree $p=5$ dense SBP diagonal-$\E$ operator with $164^3$ degrees of freedom. Furthermore, \cref{fig:tgv equal dof} shows that for an equal number of degrees of freedom, the degree $p=3$ TPSS operator yields a more accurate approximation of the kinetic energy compared to the degrees $p=4$ and $p=5$ dense SBP diagonal-$\E$ operators. Finally, as shown in \cref{fig:tgv p3344}, the TPSS operator yields more accurate values of the enstrophy with about half the degrees of freedom as dense SBP diagonal-$\E$ operators of the same degree. Additional evidence of the superior performance of the TPSS operators for the Taylor--Green vortex problem can be inferred from the results presented in \cref{fig:tgv} in \cref{sec:tgv figures}.

\begin{figure}[t]
	\centering
	\captionsetup{belowskip=0pt, aboveskip=4pt}
	\begin{subfigure}{0.5\textwidth}
		\centering
		\includegraphics[scale=0.45]{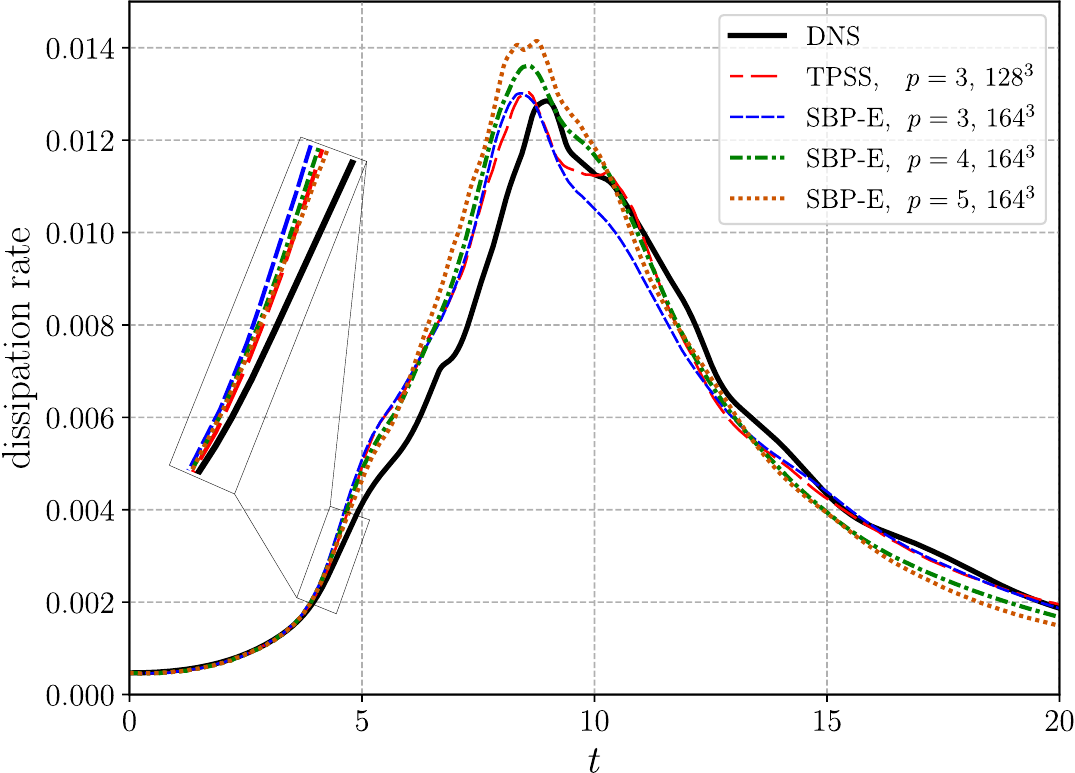}
		%\caption{\label{fig:ivp 2d res time} 2D}
	\end{subfigure}\hfill
	\begin{subfigure}{0.5\textwidth}
		\centering
		\includegraphics[scale=0.45]{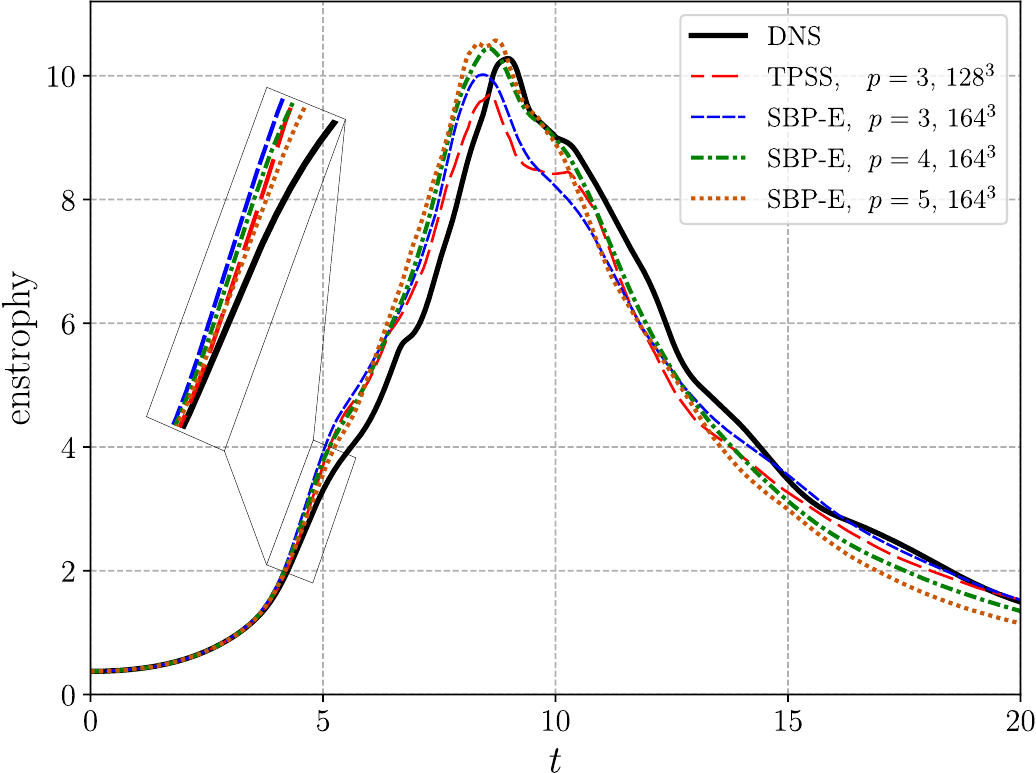}
		%\caption{\label{fig:ivp 3d res time} 3D}
	\end{subfigure} 
	\caption{\label{fig:tgv p345 zoomed} Comparison of accuracy of the TPSS and dense SBP diagonal-$\E$ operators for Taylor--Green vortex problem with varying number of degrees of freedom and polynomial degree. Line thickness is reduced in the close-up for better illustration.}
\end{figure}

\begin{figure}[t]
	\centering
	\captionsetup{belowskip=0pt, aboveskip=4pt}
	\begin{subfigure}{0.5\textwidth}
		\centering
		\includegraphics[scale=0.45]{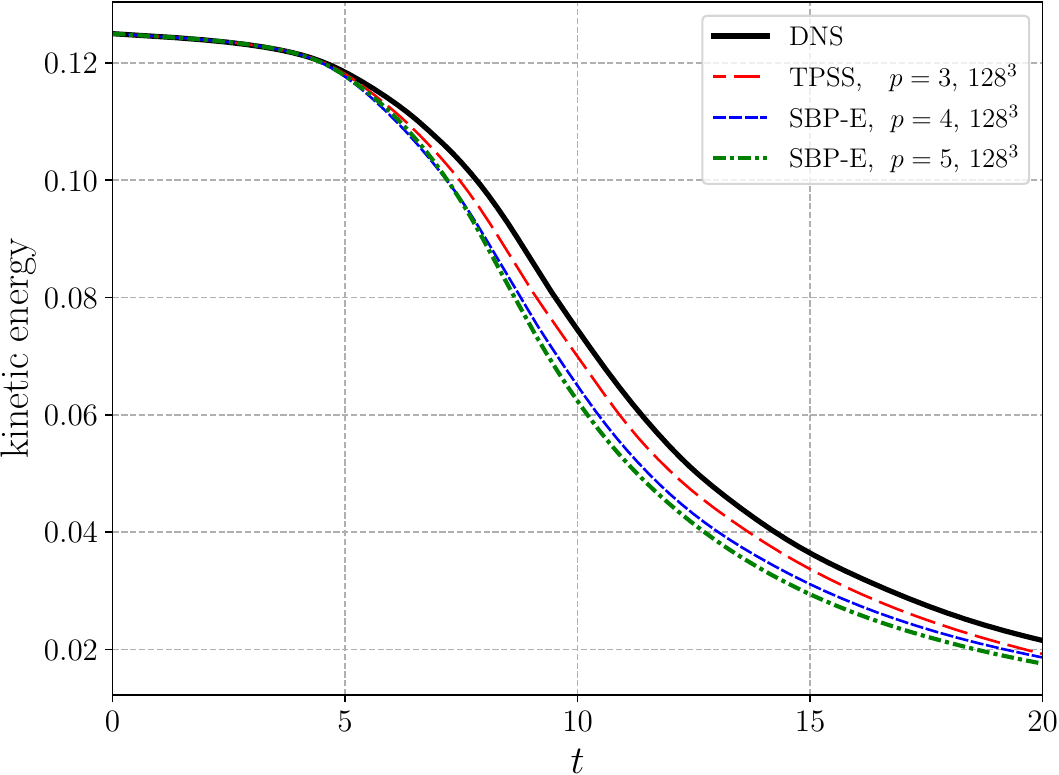}
		%\caption{\label{fig:ivp 2d res time} 2D}
	\end{subfigure}\hfill
	\begin{subfigure}{0.5\textwidth}
		\centering
		\includegraphics[scale=0.45]{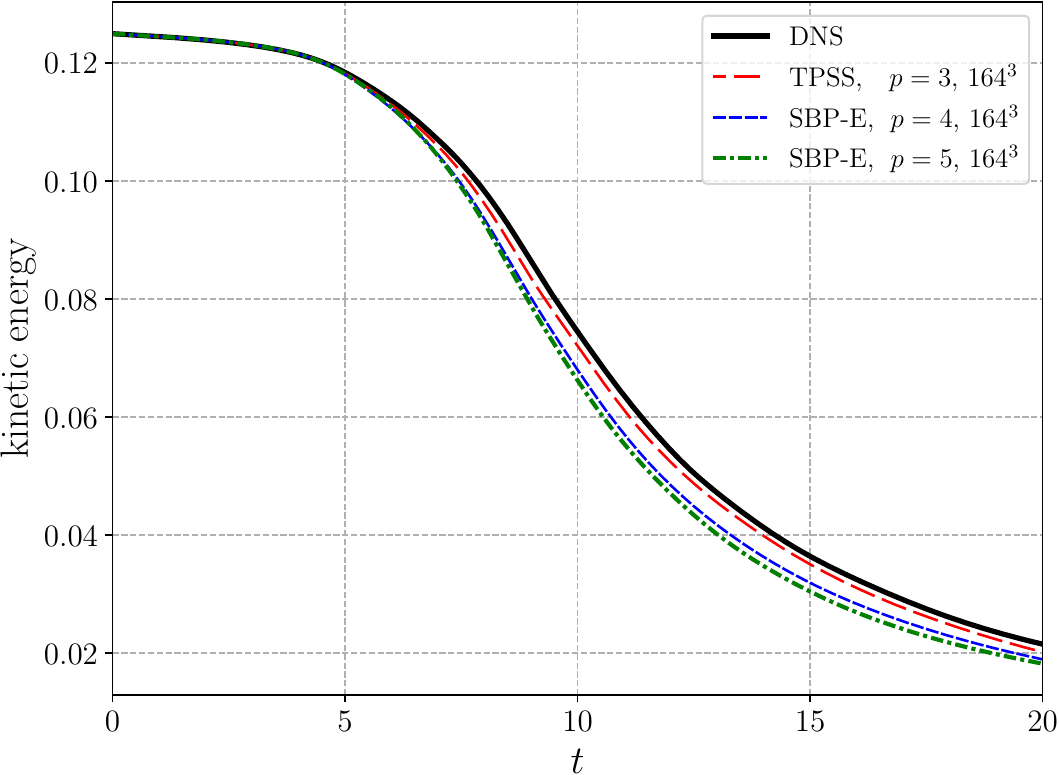}
		%\caption{\label{fig:ivp 3d res time} 3D}
	\end{subfigure} 
	\caption{\label{fig:tgv equal dof} Comparison of accuracy of the TPSS and dense SBP diagonal-$\E$ operators for Taylor--Green vortex problem for equal degrees of freedom and varying polynomial degrees.}
\end{figure}

\begin{figure}[t]
	\centering
	\captionsetup{belowskip=0pt, aboveskip=4pt}
	\begin{subfigure}{0.5\textwidth}
		\centering
		\includegraphics[scale=0.45]{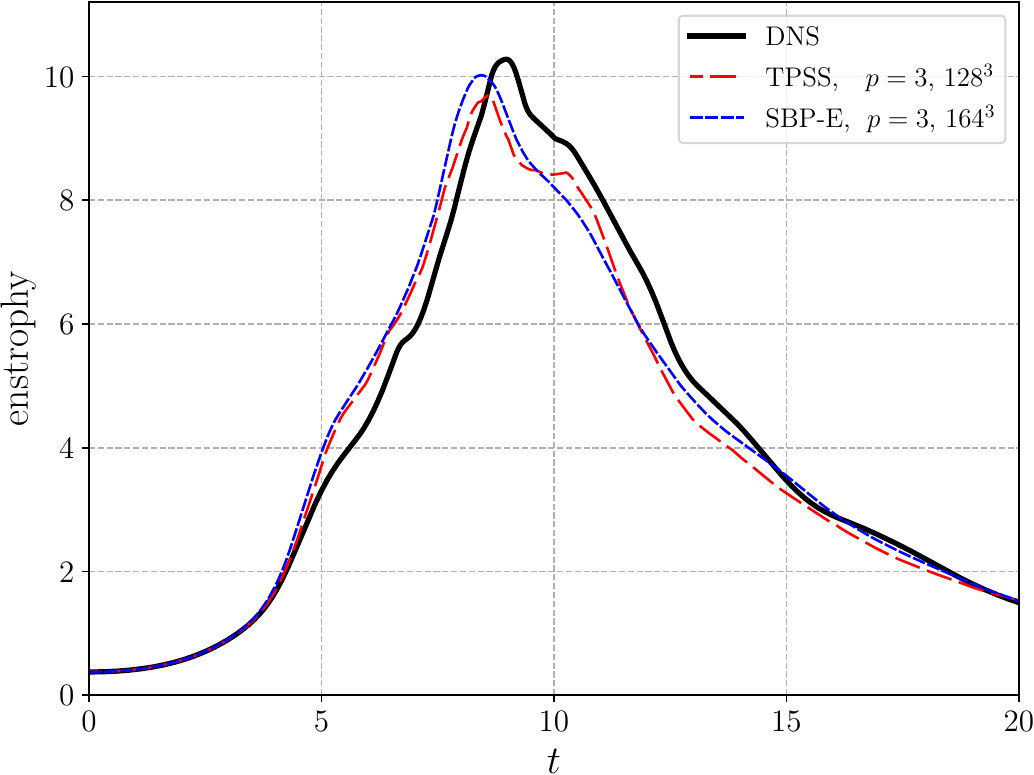}
		%\caption{\label{fig:ivp 2d res time} 2D}
	\end{subfigure}\hfill
	\begin{subfigure}{0.5\textwidth}
		\centering
		\includegraphics[scale=0.45]{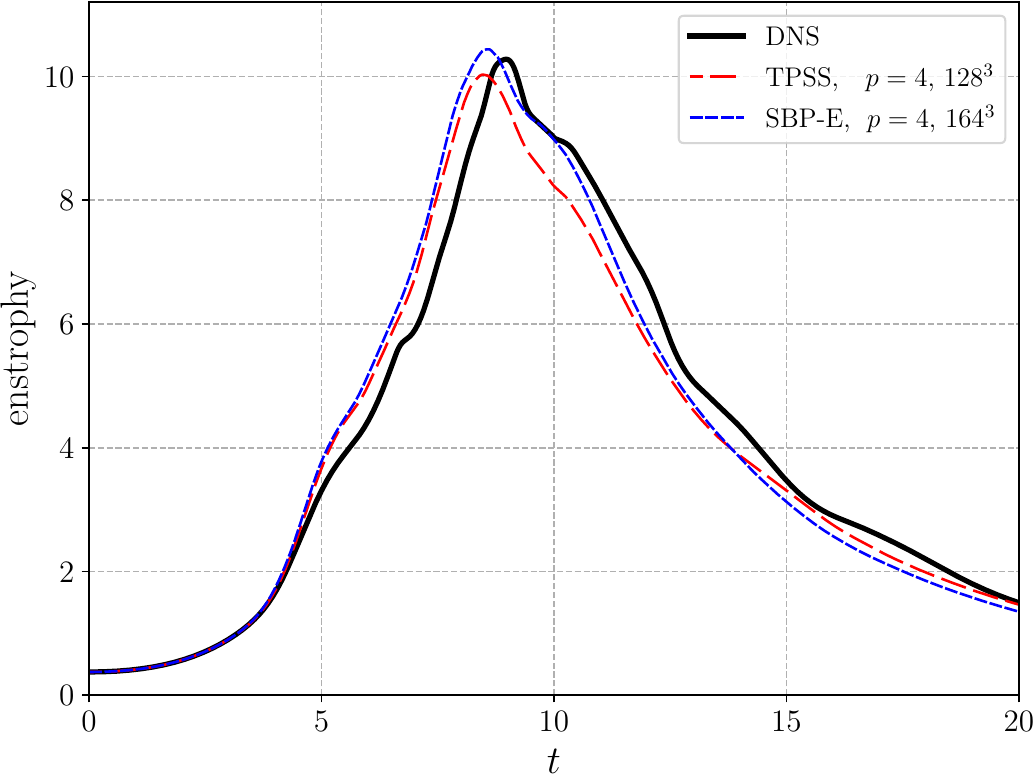}
		%\caption{\label{fig:ivp 3d res time} 3D}
	\end{subfigure} 
	\caption{\label{fig:tgv p3344} Comparison of accuracy of the TPSS and dense SBP diagonal-$\E$ operators for Taylor--Green vortex problem at equal polynomial degrees and varying number of degrees of freedom.}
\end{figure}

\section{Conclusions}\label{sec:conclusions}
We have developed novel tensor-product split-simplex multidimensional SBP operators on triangles and tetrahedra. The operators are constructed by splitting the simplicial elements into quadrilateral or hexahedral subdomains, mapping tensor-product operators into each subdomain, and assembling back the resulting operators. The procedure of splitting simplicial elements and applying tensor-product operators on the split subdomains has been used in the literature, but the TPSS operators are distinct for the following reasons:
\begin{itemize}[label=\textbullet,topsep=0pt, partopsep=0pt, itemsep=0pt, parsep=0pt]
	\item There are no repeated degrees of freedom inside the TPSS reference simplex element, and no coupling via numerical fluxes or SATs is required inside the simplices. Furthermore, the split subdomains are assembled using a method that preserves their high-order accuracy. 
	\item The TPSS operators are discretely stable by construction, \ie, the mapping from the quadrilateral or hexahedron reference element to the split quadrilateral or hexahedral subdomains in the reference triangle is handled in such a way that the SBP property is satisfied. 
	\item The simplex mesh in the computational domain does not need to be split. The operators on the reference elements are mapped to the physical elements in the same manner as any other multidimensional SBP operator; hence, the TPSS operators are truly multidimensional SBP operators with tensor-product structures. It is, however, possible to split and assemble TPSS operators on the physical mesh itself, but this approach has not be pursued in this work. 
\end{itemize}

We have established the accuracy of the TPSS operators theoretically, and demonstrated their efficiency through a number of numerical experiments. Compared to the existing dense SBP diagonal-$\E$ operators, the TPSS operators are substantially more accurate for a given number of degrees of freedom and require considerably less computational time to achieve the same error threshold. For the advection equation, the TPSS operators produce solutions that are 1 to 2 orders of magnitude more accurate for a given number of degrees of freedom. Similarly, for the three-dimensional isentropic vortex problem, they produce solutions that are greater than an order of magnitude more accurate for a fixed number of degrees of freedom. Furthermore, computational runtime comparisons on tetrahedral meshes show that the TPSS operators require about 20 times less computational time to achieve a given error threshold. For a fixed computational time, the TPSS operators produce errors that are about two orders of magnitude smaller than those of the dense SBP diagonal-$\E$ operators on tetrahedral meshes. For the viscous Taylor--Green vortex problem, TPSS operators with lower polynomial degrees and fewer degrees of freedom produce more accurate results than the dense SBP diagonal-$\E$ operators. Preliminary results show that on distorted meshes, the relative advantages of the TPSS operators in terms of accuracy and time-step restrictions decrease to some extent, but this trend does not appear to worsen substantially with severity of the distortion. In general, the TPSS operators still offer substantially more efficient discretizations compared to the existing dense SBP diagonal-$\E$ operators on distorted meshes. Further research is required to investigate the efficiency of the novel entropy-stable TPSS operators presented relative to that of entropy-stable schemes exploiting collapsed coordinates to achieve a tensor-product structure on simplices \cite{montoya2023efficient,montoya2024efficient} for problems of practical interest.

\section*{Declaration of competing interest}
The authors declare that they have no known competing financial interests or personal relationships that could have appeared to influence the work reported in this paper.

\section*{Acknowledgments}
The authors would like to thank Professor Masayuki Yano and his Aerospace Computational Engineering Lab at the University of Toronto for the use of their software, the Automated PDE Solver (APS). The first author would also like to thank Alex Bercik for the insightful discussions on the accuracy of high-order summation-by-parts operators under non-affine mappings. Computation were performed on the Niagara supercomputer at the SciNet HPC Consortium \cite{ponce2019deploying}. SciNet is funded by: the Canada Foundation for Innovation; the Government of Ontario; Ontario Research Fund - Research Excellence; and the University of Toronto.

\addcontentsline{toc}{section}{Acknowledgments}

\appendix
\newpage
\section*{Appendices}
\section{Grid convergence results for the advection problem}\label{sec:grid conv adv}
%\FloatBarrier
	\begin{table*} [h!]
	\footnotesize
	\caption{\label{tab:grid conv adv} Grid convergence study for the 2D and 3D advection problem with upwind SATs. The $\H$-norm errors are computed at $t=1$ for both the 2D and 3D cases.}
	\centering
	\setlength{\tabcolsep}{0.68em}
	\renewcommand*{\arraystretch}{1.2}
	\begin{tabular}{cllllllllllll}
		\toprule
		\multirow{3}{*}{$p$}&  \multicolumn{5}{c}{2D}&    \multicolumn{5}{c}{3D} \\ 
		\cmidrule(lr){2-6} \cmidrule(lr){7-11}
		&  \multirow{2}{*}{$ n_e $ }   & \multicolumn{2}{c}{SBP-$\E$} &  \multicolumn{2}{c}{TPSS} &   \multirow{2}{*}{$ n_e $ }   &    \multicolumn{2}{c}{SBP-$\E$} &  \multicolumn{2}{c}{TPSS} \\
		\cmidrule(lr){3-4} \cmidrule(lr){5-6} \cmidrule(lr){8-9} \cmidrule(lr){10-11}
		& & {$\H$-norm}  &{rate} & {$\H$-norm}  & {rate} &  & {$\H$-norm}  & {rate} &{$\H$-norm}  & {rate} \\
		\midrule
		\multirow[t]{4}{*}{$ 1 $}&     $ 35^2\times 2 $  	& $ 1.5886e-01 $ &     --    &   $ 2.9708e-03 $ &     --     & $ 5^3\times 6 $    &$  1.1687e-01 $  &  --     & $  1.8802e-03 $  &     --     \\  
		&   $ 46^2\times 2 $   &$ 8.4921e-02 $ &$ 2.29 $  & $ 1.5749e-03$ &$  2.32 $ & $ 10^3\times 6 $ & $  2.2786e-02$ & $ 2.36 $   & $ 2.8595e-04$ & $ 2.72 $     \\  
		&   $ 57^2\times 2 $   &$  4.8928e-02$ & $ 2.57 $   &$ 9.8235e-04$ & $ 2.20$ &  $ 15^3\times 6 $& $ 8.7703e-03$ &$  2.35 $  &  $ 1.2434e-04$ & $ 2.05  $   \\  
		&   $ 68^2\times 2 $  &$  3.0418e-02$ & $ 2.69 $  & $ 6.7403e-04$ &$  2.13$ &  $ 20^3\times 6 $& $ 4.6681e-03$ & $ 2.19 $  &  $ 6.9706e-05$ & $ 2.01  $    \\  
		\addlinespace
		\multirow[t]{4}{*}{$ 2 $}&     $ 30^2\times 2 $   &  $ 8.6415e-03$ &     --    & $  2.8152e-04$   &     --     & $ 5^3\times 6 $ & $ 4.3144e-02 $  &  --     &   $ 1.5247e-04 $ &     --     \\ 
		&    $ 40^2\times 2 $         & $ 2.9488e-03$ & $ 3.74 $  & $ 1.1773e-04$ & $ 3.03$ & $ 10^3\times 6 $ & $ 6.4558e-03 $ & $ 2.74 $ &  $ 1.4808e-05$ & $ 3.36$    \\  
		&    $ 50^2\times 2 $        &  $ 1.3883e-03$ &$  3.38 $     & $ 6.0029e-05$ & $ 3.02 $ & $15^3\times 6 $ & $ 2.1562e-03$  &$ 2.70  $  &  $ 3.9191e-06$ & $ 3.28 $      \\  
		&    $ 60^2\times 2 $       & $ 7.7438e-04$ & $ 3.20 $   & $ 3.4671e-05$ & $ 3.01 $ & $20^3\times 6 $ & $ 9.7182e-04$ &$ 2.77 $   &  $  1.5288e-06 $ &$ 3.27 $     \\  
		\addlinespace
		\multirow[t]{4}{*}{$ 3 $}&     $ 25^2\times 2 $  &  $ 1.4643e-03$ &     --    &$  2.7340e-05 $   &     --     & $ 5^3\times 6 $ &  $ 4.4746e-03$  &  --     &  $ 1.0791e-05 $   &     --     \\ 
		&    $ 34^2\times 2 $         &  $ 4.3908e-04$ &$  3.92 $  &$  7.9314e-06$ & $ 4.02 $ &  $ 10^3\times 6 $& $ 3.9061e-04$ & $ 3.52$   &  $ 5.0775e-07$ & $ 4.41 $     \\  
		&    $ 43^2\times 2 $        &$ 1.7419e-04$  &$  3.94 $   & $ 3.0910e-06$ &$  4.01$ & $ 15^3\times 6 $ & $ 9.0719e-05$ & $ 3.60 $ &  $ 1.1371e-07$ & $ 3.69 $  \\  
		&    $ 52^2\times 2 $        & $ 8.2126e-05$ & $ 3.96 $    & $ 1.4421e-06 $ &$ 4.01 $ & $ 20^3\times 6 $ & $ 3.0900e-05$ & $ 3.74    $ & $ 4.0697e-08$ & $ 3.57 $   \\  
		\addlinespace
		\multirow[t]{4}{*}{$ 4 $}&     $ 20^2\times 2 $ & $ 4.5390e-04$  &     --    &  $ 3.7776e-06$   &     --     &$ 5^3\times 6 $  &  $ 8.1064e-04$  &  --     &  $ 8.4527e-07$   &     --     \\ 
		&    $ 28^2\times 2 $        &$ 9.9500e-05$ & $ 4.51 $   & $ 6.5562e-07$ & $ 5.20  $  & $ 10^3\times 6 $ &$ 4.3935e-05$  &$ 4.21  $ & $  1.8421e-08$ & $ 5.52 $    \\  
		&    $ 36^2\times 2 $       & $ 3.2267e-05$ & $ 4.48   $ &$   1.9468e-07$ & $ 4.83 $  & $15^3\times 6 $ &  $ 6.3243e-06$ & $ 4.78  $  &  $  2.2792e-09$ &$  5.15   $  \\  
		&    $ 44^2\times 2 $        & $ 1.3100e-05$ & $ 4.49  $   &$  7.0252e-08$ & $ 5.08 $ & $ 20^3\times 6 $ &  $ 1.6100e-06$  &$ 4.76   $  &    $ 5.2483e-10$ & $ 5.10$     \\  
		\addlinespace
		\multirow[t]{4}{*}{$ 5$}&     $ 15^2\times 2 $   &  $ 2.7900e-04$ &     --    &  $ 9.3948e-07 $  &     --     & $ 5^3\times 6 $ & $ 1.4885e-04 $  &  --     &  $ 3.5014e-08$  &     --     \\ 
		&    $ 22^2\times 2 $       & $ 3.5564e-05$ & $ 5.34  $   &$  1.0399e-07 $ & $ 5.75 $ &  $ 10^3\times 6 $& $  3.9943e-06$ & $ 5.22$  & $ 5.3716e-10$ & $ 6.03  $ \\  
		&    $ 29^2\times 2 $        & $ 8.5520e-06$ &$  5.16 $  & $ 1.7328e-08 $ &$  6.49 $  & $ 15^3\times 6 $ & $ 4.8853e-07$ &$ 5.18 $  &  $ 5.7630e-11$ &$  5.51$     \\  
		&    $ 36^2\times 2 $        & $ 2.8587e-06$ & $ 5.07  $    & $ 5.1844e-09 $ & $ 5.58$   & $ 20^3\times 6 $ & $  1.1049e-07$ &$  5.17 $  &  $ 1.3352e-11 $  &$  5.08$ \\  
		
		\bottomrule
	\end{tabular}
\end{table*}
\newpage
\section{Grid convergence results for the isentropic vortex problem}\label{sec:grid conv ivp}
\begin{table*} [h!]
	\footnotesize
	\caption{\label{tab:grid conv ivp} Grid convergence study for the 2D and 3D advection problem discretized using the Hadamard-form scheme with the Ismail-Roe two-point fluxes and no interface dissipation. The $\H$-norm errors are computed at $t=2$ and $t=1$ for the 2D and 3D cases, respectively.}
	\centering
	\setlength{\tabcolsep}{0.68em}
	\renewcommand*{\arraystretch}{1.2}
	\begin{tabular}{cllllllllll}
		\toprule
		\multirow{3}{*}{$p$}&  \multicolumn{5}{c}{2D}&    \multicolumn{5}{c}{3D} \\ 
		\cmidrule(lr){2-6} \cmidrule(lr){7-11}
		&  \multirow{2}{*}{$ n_e $ }   & \multicolumn{2}{c}{SBP-$\E$} &  \multicolumn{2}{c}{TPSS} &   \multirow{2}{*}{$ n_e $ }   &    \multicolumn{2}{c}{SBP-$\E$} &  \multicolumn{2}{c}{TPSS} \\
		\cmidrule(lr){3-4} \cmidrule(lr){5-6} \cmidrule(lr){8-9} \cmidrule(lr){10-11}
		& & {$\H$-norm}  &{rate} & {$\H$-norm}  & {rate} &  & {$\H$-norm}  & {rate} &{$\H$-norm}  & {rate} \\
		\midrule
		\multirow[t]{4}{*}{$ 1 $}&     $ 20^2\times 2 $  	& $ 7.2383e-01$  &     --    &  $ 2.0116e-01$   &     --     & $ 5^3\times 6 $   &  $ 7.3473e+00$  &  --     &   $ 6.7330e-01$  &     --     \\  
		&   $ 30^2\times 2 $        & $ 4.1073e-01$  & $ 1.40$    &   $ 9.5821e-02$ & $ 1.83$     &  $ 10^3\times 6 $&  $ 3.7359e+00$ & $ 0.98$    & $ 1.3236e-01$ & $ 2.35$      \\  
		&    $ 40^2\times 2 $        & $ 2.5426e-01$ & $ 1.67$      & $ 5.2785e-02$  &$ 2.07$     &  $ 15^3\times 6 $& $ 2.1157e+00$ & $ 1.40$   & $ 4.0707e-02$ & $ 2.91$      \\  
		&    $ 50^2\times 2 $        & $ 1.7114e-01 $ & $ 1.77$      &   $ 3.4825e-02$ &$ 1.86 $     & $ 20^3\times 6 $ & $ 1.4422e+00$ & $ 1.33$   & $  2.1197e-02$ & $ 2.27$    \\  
		&       								 &  &     &    &      & $ 25^3\times 6 $ &  $ 1.0380e+00$  & $ 1.47$   &    &      \\  
		&       								 &  &     &    &      & $ 30^3\times 6 $ &  $ 7.9249e-01$  &  $ 1.48 $  &    &      \\  
		\addlinespace
		\multirow[t]{4}{*}{$ 2 $}&     $ 20^2\times 2 $   & $ 2.4048e-01 $ &     --    & $ 1.6436e-02$   &     --     &  $ 5^3\times 6 $& $ 5.2483e+00$   &  --     &  $ 2.0157e-01 $  &     --     \\ 
		&    $ 30^2\times 2 $        & $ 8.9582e-02$ & $ 2.44$  & $ 4.9432e-03$ & $ 2.96$      & $ 10^3\times 6 $ & $  1.7563e+00$  & $ 1.58 $   &  $ 2.1648e-02$ &$  3.22 $  \\  
		&    $ 40^2\times 2 $        & $ 4.1745e-02$ & $ 2.65 $  & $  2.2873e-03$ &$  2.68 $     & $ 15^3\times 6 $ & $  8.1358e-01$ & $ 1.90 $    & $ 5.3565e-03$ & $ 3.44 $    \\  
		&    $ 50^2\times 2 $       &  $ 2.3406e-02$ &$ 2.59$    &  $ 1.2988e-03$ & $ 2.54  $     &  $ 20^3\times 6 $&  $ 4.3663e-01$ &$ 2.16$  &   $ 2.1267e-03$ & $ 3.21 $    \\  
		&       								 &  &     &    &      & $ 25^3\times 6 $ &  $ 2.5071e-01 $ & $ 2.49 $  &    &      \\  
		&       								 &  &     &    &      &  $ 30^3\times 6 $& $  1.6515e-01 $ & $ 2.29  $    &    &      \\  
		\addlinespace
		\multirow[t]{4}{*}{$ 3 $}&     $ 20^2\times 2 $   & $  7.3899e-02$ &     --    &   $ 2.5877e-03 $ &     --     &  $ 5^3\times 6 $&  $ 3.1482e+00$  &  --     &  $ 6.6803e-02$   &     --     \\ 
		&    $ 30^2\times 2 $         &$ 1.9839e-02$ & $ 3.24 $ &  $ 6.8097e-04$ & $ 3.29 $  &  $ 10^3\times 6 $&  $ 7.0380e-01$ & $ 2.16  $ &   $  4.7270e-03$ & $ 3.82$      \\  
		&    $ 40^2\times 2 $        &$ 7.9698e-03$ &$  3.17$    &  $ 2.4565e-04$ & $ 3.54$     & $ 15^3\times 6 $ & $ 2.4374e-01$ & $ 2.62  $  &  $  7.7821e-04$ & $ 4.45$      \\  
		&   $ 50^2\times 2 $        & $ 3.5824e-03$ &$  3.58$    & $  1.1425e-04$ & $ 3.43 $     & $ 20^3\times 6 $ & $ 9.8728e-02$ & $ 3.14  $   & $   2.2369e-04$ &$  4.33 $     \\  
		&       								 &  &     &    &      & $ 25^3\times 6 $ &   $ 5.1393e-02$  &  $ 2.93$    &    &      \\  
		&       								 &  &     &    &      & $ 30^3\times 6 $ &   $ 2.8662e-02$  & $ 3.20  $  &    &      \\  
		\addlinespace
		\multirow[t]{4}{*}{$ 4 $}&     $ 20^2\times 2 $    &$ 2.0905e-02 $  &     --    &  $ 3.2374e-04 $  &     --     & $ 5^3\times 6 $ &  $ 2.1926e+00$  &  --     &  $ 2.3497e-02$   &     --     \\ 
		&    $ 30^2\times 2 $        & $ 3.7355e-03$ & $ 4.25 $   &  $ 4.3499e-05$ & $ 4.95$    & $ 10^3\times 6 $ &  $ 2.8815e-01$ & $ 2.93  $ &    $ 9.5171e-04$ & $ 4.63 $      \\  
		&    $ 40^2\times 2 $        & $ 1.0581e-03$ &$  4.38  $ &  $  1.1967e-05$ & $ 4.49  $  & $ 15^3\times 6 $ &   $ 7.5276e-02$ & $ 3.31 $   &    $ 9.1641e-05 $ & $ 5.77  $     \\  
		&    $ 50^2\times 2 $        &  $ 4.0162e-04$ & $ 4.34$    &  $ 4.4235e-06$ &$  4.46   $   &  $ 20^3\times 6 $ &  $ 2.6651e-02$ & $ 3.61 $ &  $ 2.1964e-05$ & $ 4.97 $  \\  
		&       								 &  &     &    &      & $ 25^3\times 6 $ &$    1.0084e-02$ &$  4.36 $    &    &      \\  
		&       								 &  &     &    &      & $ 30^3\times 6 $ &   $ 4.7041e-03 $ & $ 4.18 $     &    &      \\  
		\addlinespace
		\multirow[t]{4}{*}{$ 5 $}&     $ 20^2\times 2 $   & $ 5.5445e-03 $ &     --    &  $ 3.6064e-05$   &     --     & $ 5^3\times 6 $ &  $ 1.3629e+00 $ &  --     &  $ 8.5285e-03 $  &     --     \\ 
		&    $ 30^2\times 2 $        & $ 7.0851e-04$ & $ 5.07  $   &  $ 4.1256e-06$ & $ 5.35 $     & $ 10^3\times 6 $ &$ 1.1097e-01$ & $ 3.62  $  &  $  1.7507e-04$ &$  5.61 $  \\  
		&    $ 40^2\times 2 $        & $ 1.5876e-04$ & $ 5.20  $   & $  9.1075e-07$ &$  5.25  $    & $ 15^3\times 6 $ &  $ 2.2833e-02$ & $ 3.90  $  & $  1.3964e-05  $ & $  6.24 $    \\  
		&    $ 50^2\times 2 $        & $ 5.0034e-05$ & $ 5.17   $  & $  2.7645e-07 $ & $ 5.34 $  & $ 20^3\times 6 $ &$ 5.2878e-03$ & $ 5.08  $   &  $ 2.2985e-06$  &$   6.27$     \\  
		&       								 &  &     &    &      & $ 25^3\times 6 $ &  $1.8199e-03$ & $4.78$ &    &      \\  
		&       								 &  &     &    &      & $ 30^3\times 6 $ &  $6.8777e-04$& $5.34$    &    &      \\  
		\bottomrule
	\end{tabular}
\end{table*}
\newpage
\section{Results for the Taylor--Green vortex problem}\label{sec:tgv figures}
	\begin{figure}[h!]
	\centering
	\captionsetup{belowskip=0pt, aboveskip=0pt}
	\begin{subfigure}{0.30\textwidth}
		\centering
		\includegraphics[scale=0.285]{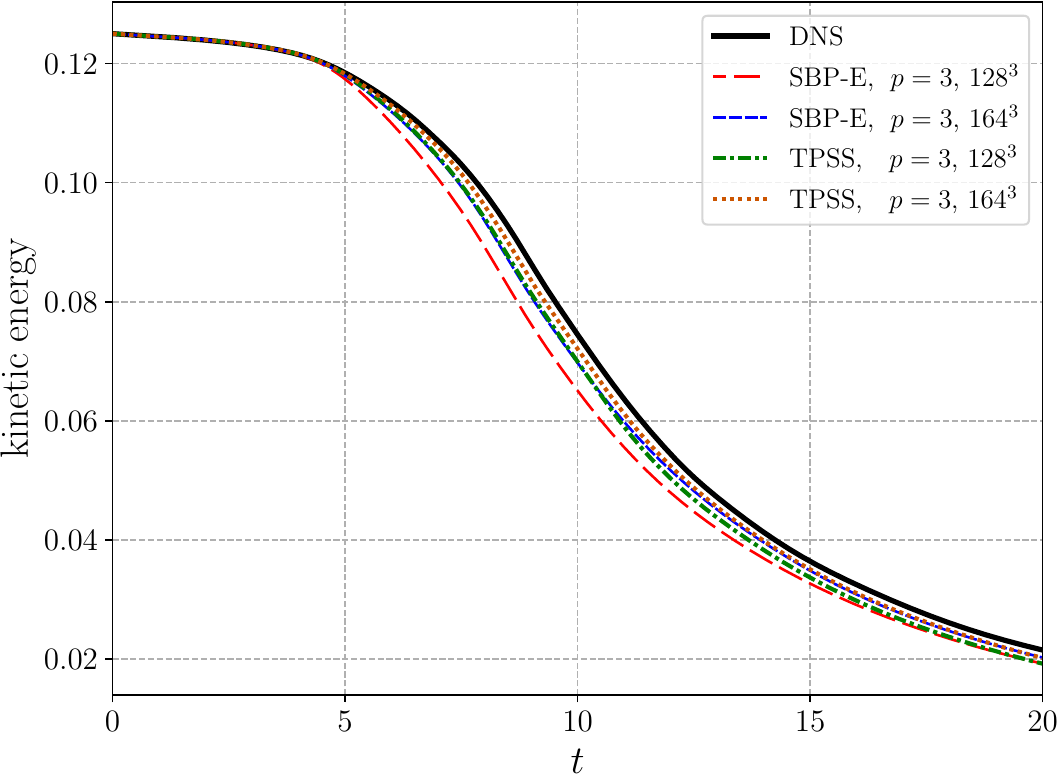}
	\end{subfigure}\hfill
	\begin{subfigure}{0.30\textwidth}
		\centering
		\includegraphics[scale=0.285]{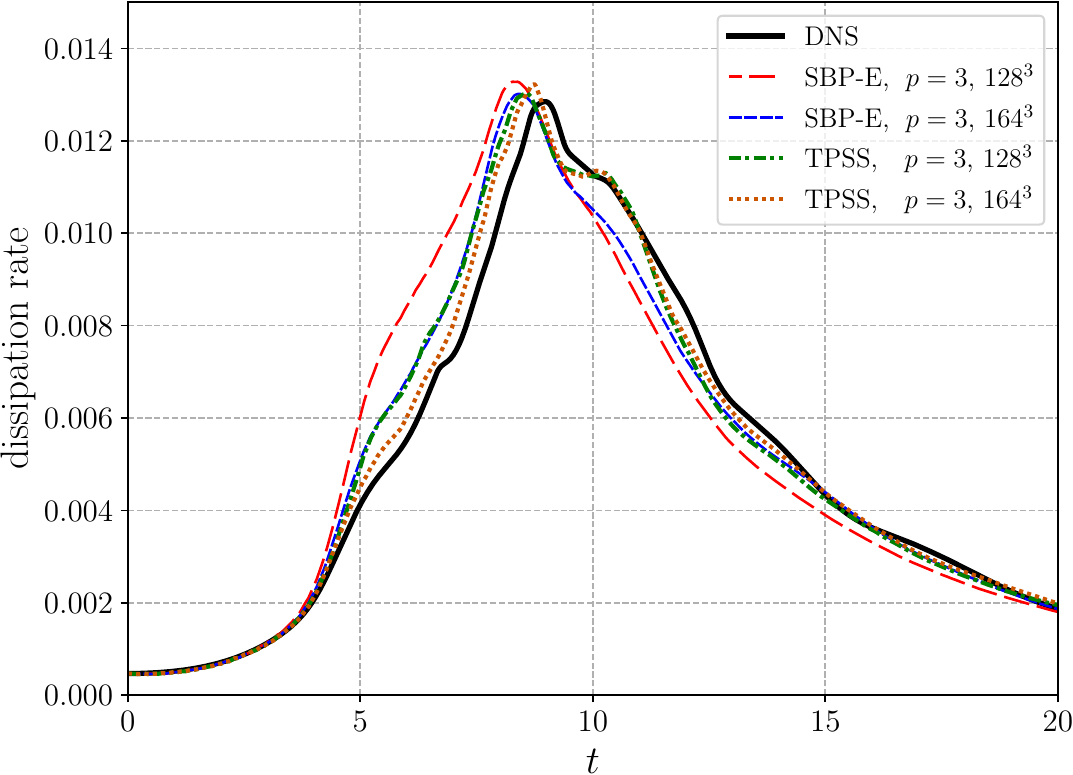}
	\end{subfigure} 
	\hfill
	\begin{subfigure}{0.30\textwidth}
		\centering
		\includegraphics[scale=0.285]{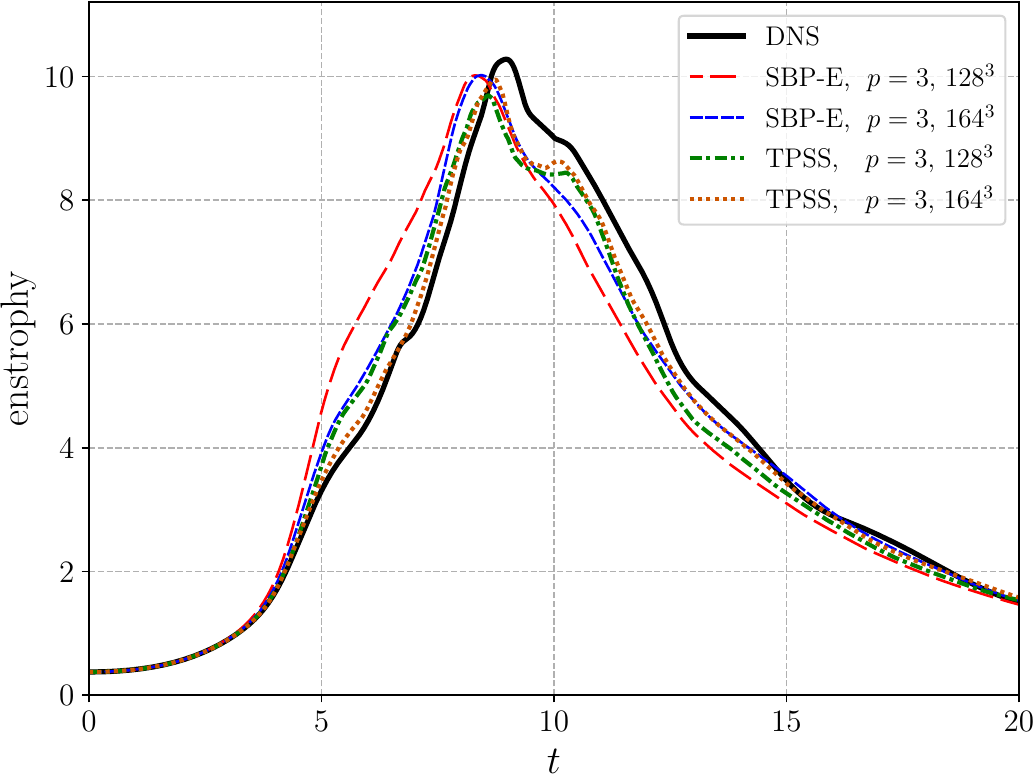}
	\end{subfigure} 
	\vspace{0.2cm}
	\\
	\begin{subfigure}{0.30\textwidth}
		\centering
		\includegraphics[scale=0.285]{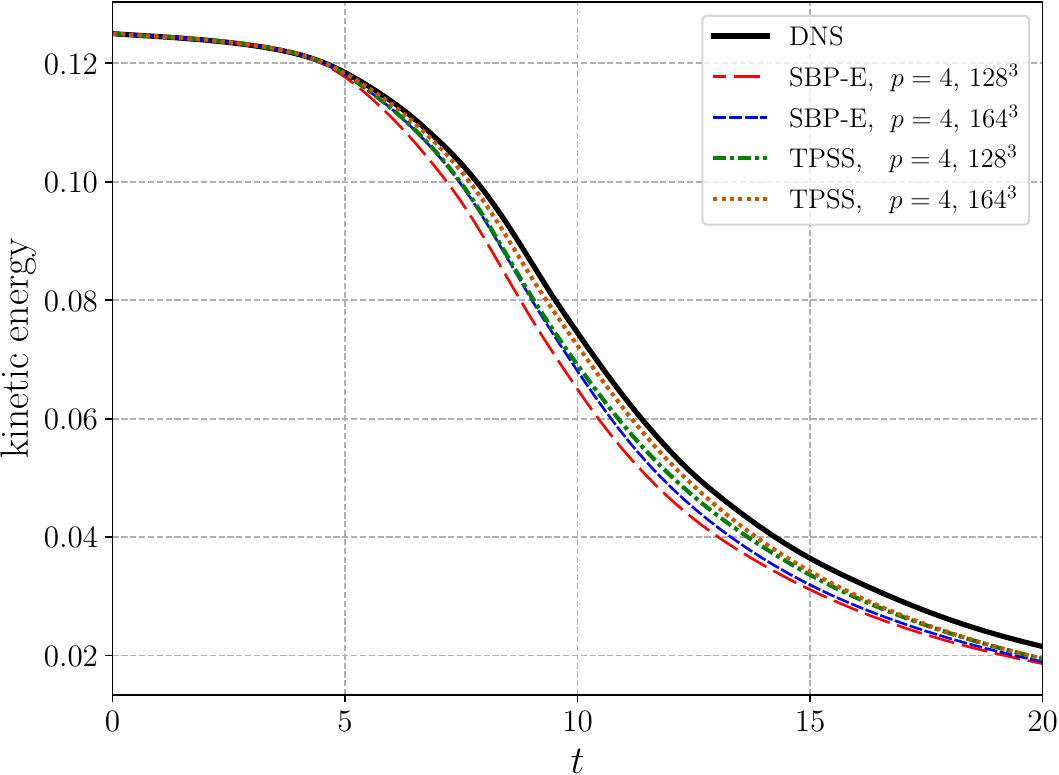}
	\end{subfigure}\hfill
	\begin{subfigure}{0.30\textwidth}
		\centering
		\includegraphics[scale=0.285]{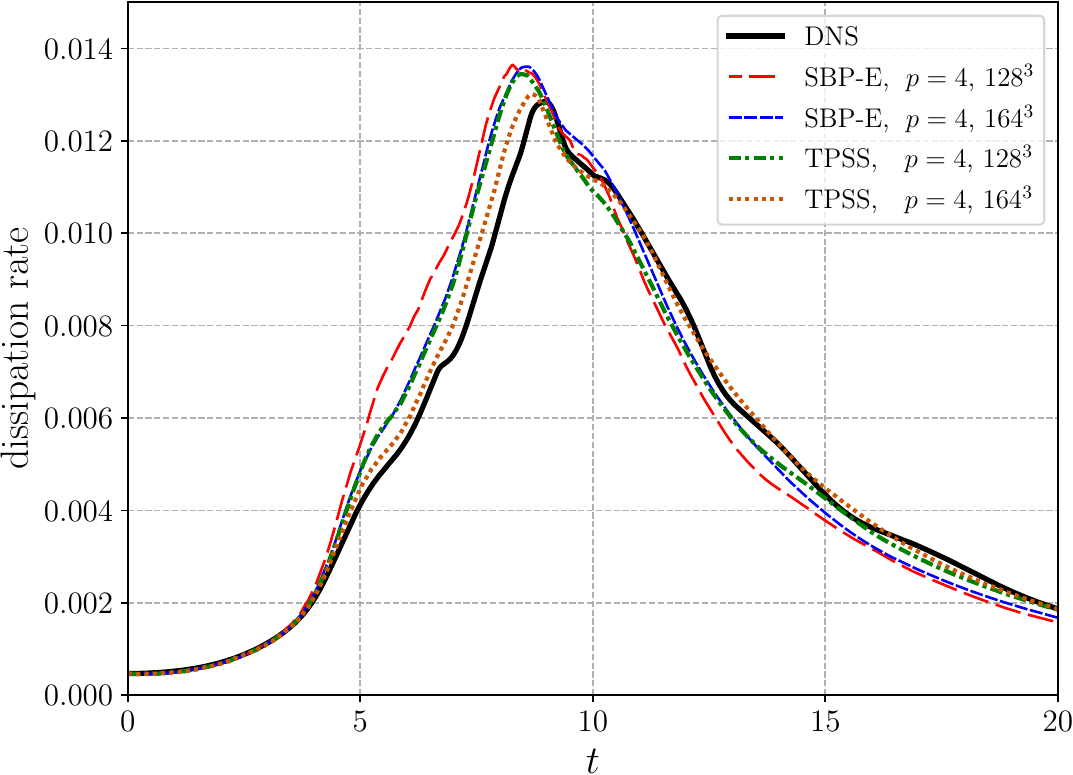}
	\end{subfigure} 
	\hfill
	\begin{subfigure}{0.30\textwidth}
		\centering
		\includegraphics[scale=0.285]{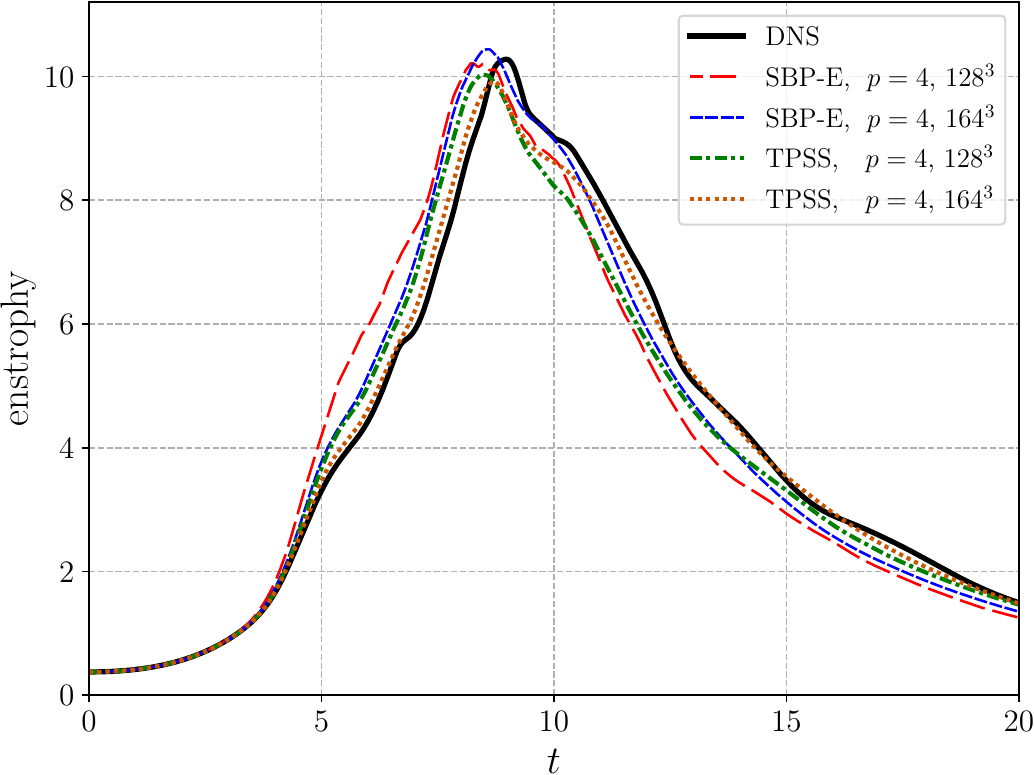}
	\end{subfigure} 
	\vspace{0.2cm}
	\\
	\begin{subfigure}{0.30\textwidth}
		\centering
		\includegraphics[scale=0.285]{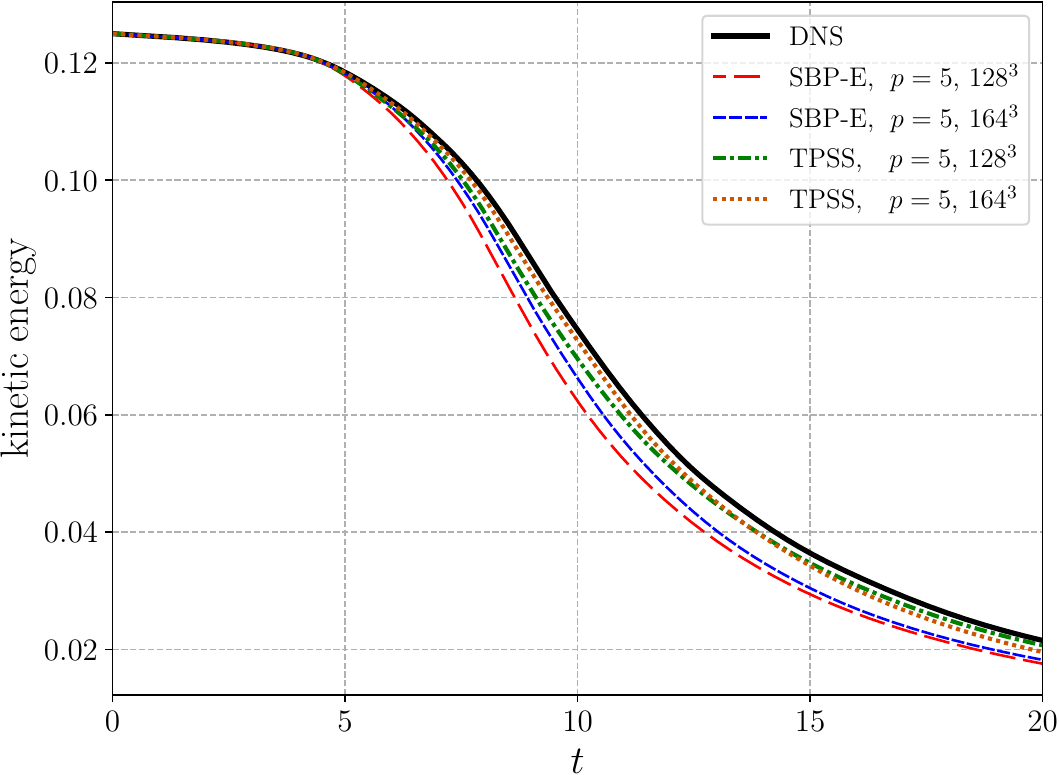}
	\end{subfigure}\hfill
	\begin{subfigure}{0.30\textwidth}
		\centering
		\includegraphics[scale=0.285]{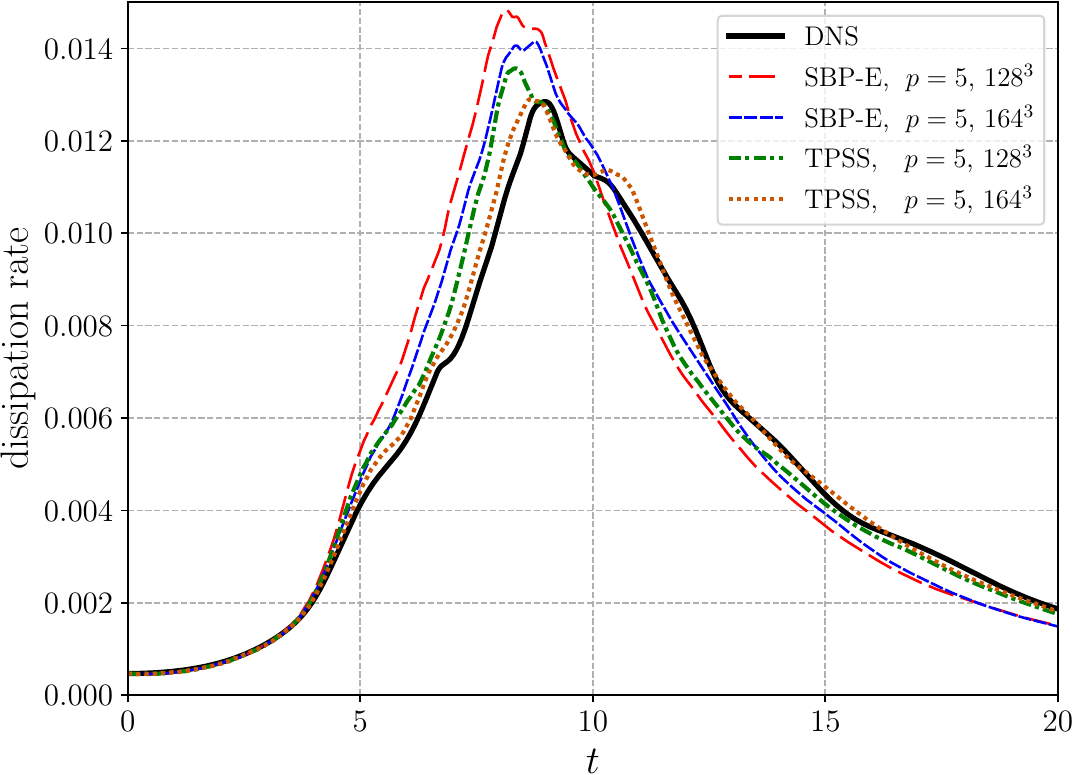}
	\end{subfigure} 
	\hfill
	\begin{subfigure}{0.30\textwidth}
		\centering
		\includegraphics[scale=0.285]{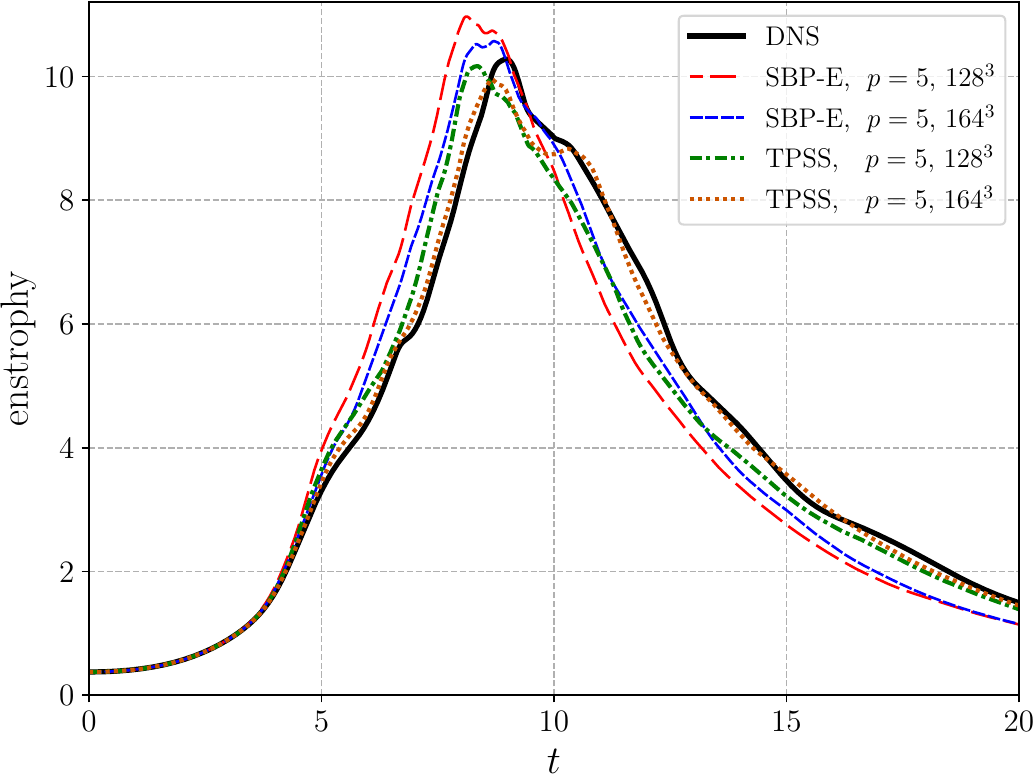}
	\end{subfigure} 
	\vspace{0.2cm}
	\\
	\begin{subfigure}{0.30\textwidth}
		\centering
		\includegraphics[scale=0.285]{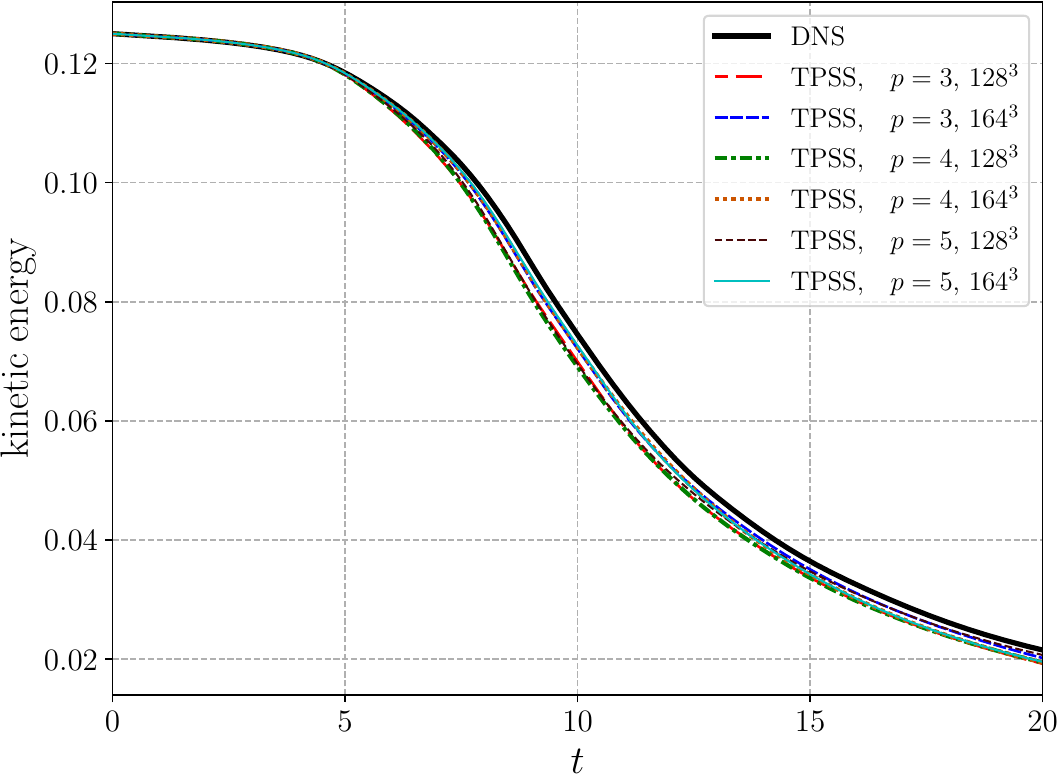}
	\end{subfigure}\hfill
	\begin{subfigure}{0.30\textwidth}
		\centering
		\includegraphics[scale=0.285]{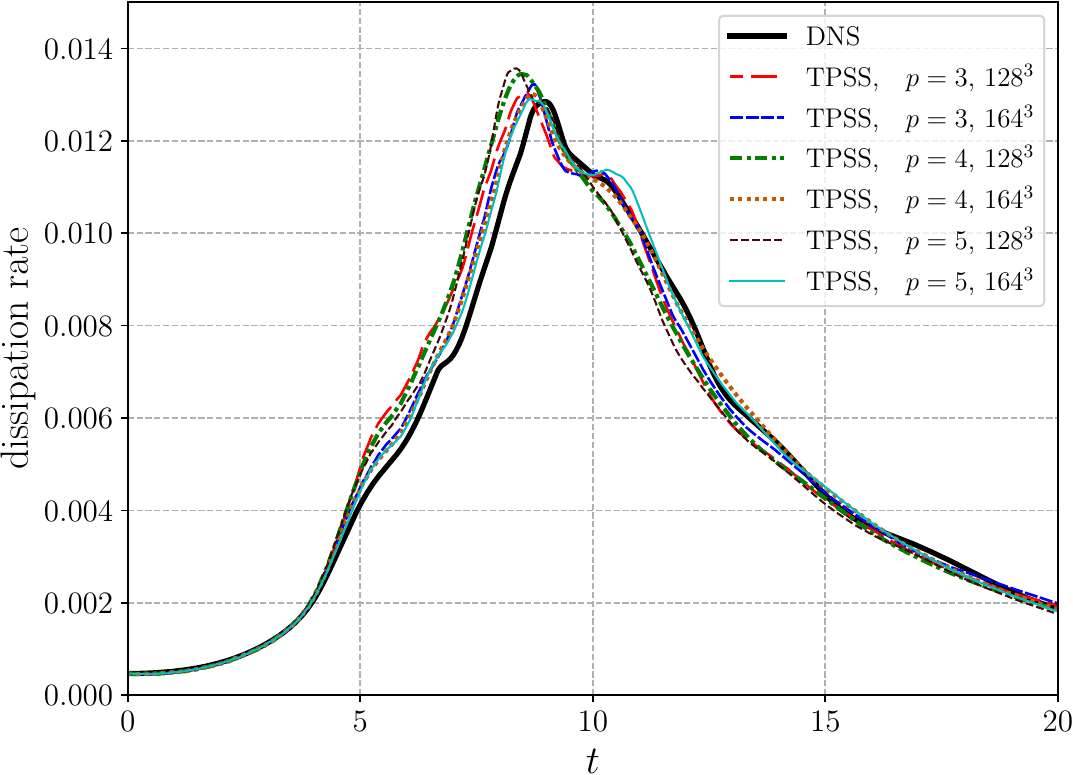}
	\end{subfigure} 
	\hfill
	\begin{subfigure}{0.30\textwidth}
		\centering
		\includegraphics[scale=0.285]{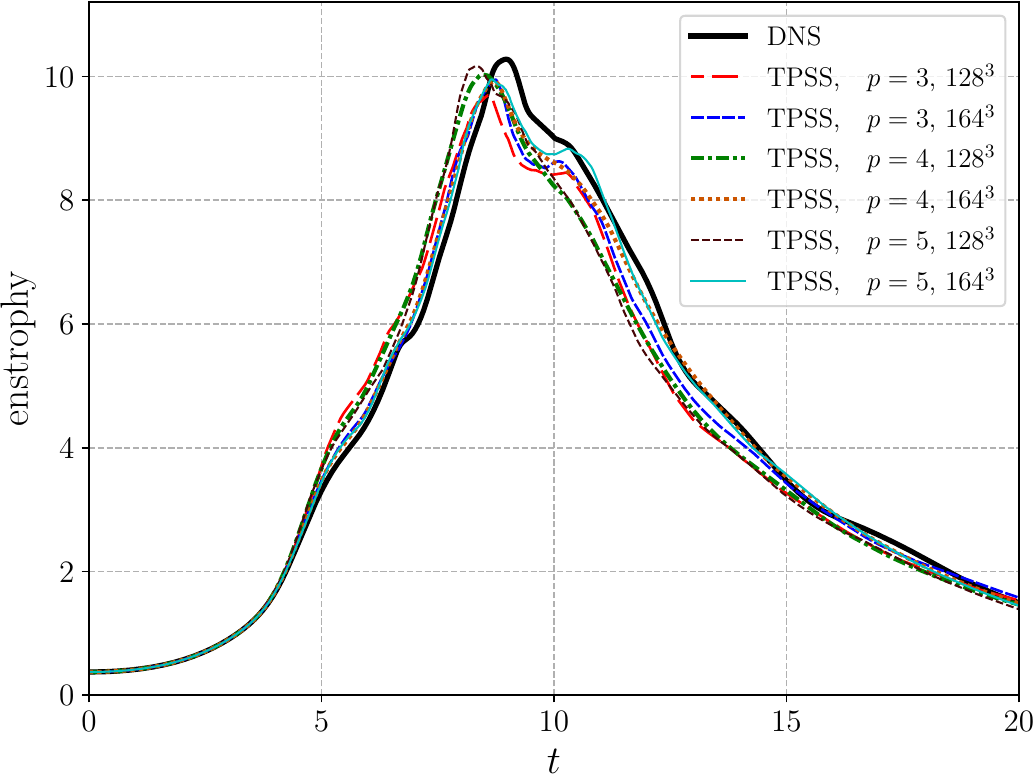}
	\end{subfigure} 
	\caption{\label{fig:tgv} Evolution of the kinetic energy, dissipation rate, and enstrophy of the Taylor--Green vortex problem at $Re=1600$. The cubed numbers in the labels indicate the nominal number of degrees of freedom in the mesh.}
\end{figure}

%%Vancouver style references.
\bibliographystyle{model1-num-names}
{\small
	\bibliography{references}
}
\addcontentsline{toc}{section}{\refname}

\end{document}